\def\r{\mathbb R}
\newtheorem{theorem}{Theorem}[section]
\newtheorem{corollary}[theorem]{Corollary}
\newtheorem{proposition}[theorem]{Proposition}
\newtheorem{definition}[theorem]{Definition}
\title{Classification of rotational surfaces in Euclidean space satisfying a linear relation between their principal curvatures}
\author{Rafael L\'opez\footnote{Partially
supported by MEC-FEDER
 grant no. MTM2017-89677-P}\\
 Departamento de Geometr\'{\i}a y Topolog\'{\i}a\\ Instituto de Matem\'aticas (IEMath-GR)\\
 Universidad de Granada\\
 18071 Granada, Spain\\
\texttt{rcamino@ugr.es}\\
\\
\'Alvaro P\'ampano\footnote{Partially supported by MINECO-FEDER grant MTM2014-54804-P and Gobierno Vasco grant
IT1094-16. The author has been supported by Programa Predoctoral del Gobierno Vasco, 2015}\\
Departament of  Mathematics\\ Faculty of Science and Technology\\
University of the Basque Country\\
48940 Bilbao, Spain\\
\texttt{alvaro.pampano@ehu.eus}}
\date{}
\begin{document}
\maketitle

\begin{abstract}
We classify all rotational surfaces in Euclidean space whose principal curvatures $\kappa_1$ and $\kappa_2$ satisfy the linear relation $\kappa_1=a\kappa_2+b$, where $a$ and $b$ are two constants. We give a variational characterization of these surfaces in terms of its generating curve. As a consequence of our classification,  we find closed (embedded and not embedded) surfaces  and periodic (embedded and not embedded) surfaces with a geometric behaviour similar to Delaunay surfaces.
\end{abstract}
\noindent {\it Keywords:} Weingarten surface, principal curvature, rotational surface, phase plane\\
{\it AMS Subject Classification:} 53A10, 53C42
\section{Introduction and summary of shapes} 

In this paper we investigate surfaces in    the Euclidean three-dimensional space $\r^3$ satisfying the linear relation $a\kappa_1+b\kappa_2=c$ between the principal curvatures $\kappa_1$ and $\kappa_2$,   where $a, b$ and $c$ are three real constants, such that $a^2+b^2\neq 0$. From now, we will discard the trivial case where the two constants $a$ and $c$ (resp. $b$ and $c$) are both $0$, because in such case, $\kappa_2=0$ (resp. $\kappa_1=0$) and, then the surface is developable and trivially  satisfies the above linear relation. Following Chern \cite{ch}, a Weingarten surface  is a surface  where   $\kappa_1$ and $\kappa_2$ satisfy a certain relation $\Phi(\kappa_1,\kappa_2)=0$. These surfaces were introduced by Weingarten in \cite{we} and its study occupies an important role in classical differential geometry.   A first result due to Chern proves that the sphere is the only ovaloid with the property that $\kappa_1$ is a decreasing function of $\kappa_2$ \cite{ch} (for example, if $a<0$).   Later, Hopf   proved in \cite{lo} that there do not exist closed analytic surfaces of genus greater or equal than $2$ unless $a=-1$, that is, the surface has constant mean curvature and if the genus is $0$ and the surface is analytic and rotational, then $a$ or $1/a$ must be an odd integer.  Indeed, for each $a>1$, Hopf  proved the existence of  a non-spherical closed convex rotational  $C^2$-surface. The particular case  $a=-1$  is exceptional. Hopf proved that the sphere is the only closed surface of genus $0$ with constant mean curvature (\cite{ho2}). During many years, it was conjectured that the sphere was the only closed surface with constant mean curvature until   in 1986 Wente found an immersed torus in $\r^3$ with constant mean curvature \cite{wente}. Later, Kapouleas proved the existence of closed surfaces for arbitrary genus \cite{ka}.

We point out that surfaces satisfying a relation $\Psi(H,K)=0$ between the mean curvature $H$ and the Gauss curvature $K$ have been considered in the literature: here we only  refer \cite{cft,gmm,rs}. However  the linear case $pH+qK=r$ is equivalent to $p\kappa_1+p\kappa_2+2q\kappa_1\kappa_2=2r$, which is not of type $a\kappa_1+b\kappa_2=c$.

Other surfaces satisfying the relation  $a\kappa_1+b\kappa_2=c$ are those ones where    one of the principal curvatures is constant. If for example  $\kappa_1$ is constant, we take $b=0$, and for each $a\in\r$, $c=a\kappa_1$ satisfies (\ref{rw}). From the above paragraph, we exclude the case $c=0$, that is, that $\kappa_1=0$. Surfaces with one constant principal curvature were classified in \cite{st} and they are spheres or tubes along a regular curve. Because in this paper we are concerned with rotational surfaces, then the only surfaces are spheres and tori of revolution. 

After these examples, we rewrite the linear relation $a\kappa_1+b\kappa_2=c$ and we  give the next definition.

\begin{definition} A linear Weingarten surface $M$ in $\r^3$ is a surface such that 
\begin{equation}\label{rw}
\kappa_1(p)=a\kappa_2(p)+b,\quad p\in M,
\end{equation}
where $a,b\in\r$, $a\not=0$.
\end{definition}

 Examples of linear Weingarten surfaces are the following:
\begin{enumerate}
\item \emph{Umbilical surfaces}. This is the case  when  $a=1$ and $b=0$. Then  $M$ is a part of a plane or a part of a round sphere.
\item \emph{Isoparametric surfaces}. In this case   both principal curvatures are constant. Besides the umbilical surfaces, the surface must be a  circular cylinder. 
\item \emph{Constant mean curvature surfaces}. This is the case when  $a=-1$ and the surface  has constant mean curvature $H=b/2$. 
\end{enumerate}

From the  above results, it is clear that the class of rotational linear Weingarten surfaces deserves to be known explicitly. However, and surprisingly,  up today these surfaces are not completely classified and this is one of the main objectives of this paper. A second purpose is to give  a variational characterization of the profile curve of this class of surfaces, which is 
also unexpected because variational methods are commonly associated to the concepts of the mean curvature $H$ or the Gauss curvature $K$. 

We now review the main results of the rotational linear Weingarten surfaces. In \cite{pa}, the author only computed   the differential equation of the generating curve when $b=0$. Recall that Hopf proved in \cite{ho} the existence of  convex closed rotational surfaces for any $a>0$: see also \cite{ks} for the existence of rotational closed surfaces with other relations $\Phi(\kappa_1,\kappa_2)$. When $a=2$ and $b=0$, Mladenov and Oprea have named this surface as the Mylar balloon \cite{mo1}. If $a>0$ and $b=0$, they have also given   parametrizations of the closed surfaces in terms of elliptic and hypergeometric functions and show that the surface is a   critical point  of a variational problem \cite{mo2}. In the special case $b=0$, Barros and Garay proved that all the parallels of these rotational surfaces are critical points for an energy functional involving the normal curvature and acting on the space of closed curves immersed in the surface \cite{bg}: see also some graphics  in \cite{ku}.  On the other hand, the first author studied linear Weingarten surfaces foliated by a uniparametric group of circles, proving that the surface is rotational or   the surface is one of the minimal examples of Riemann \cite{lo}.

In many of the works mentioned above, the authors only study closed rotational surfaces, specially ovaloids. For instance, if $b=0$, and not assuming rotational symmetry, then $K=a\kappa_2^2$ and thus, if $a<0$, there do not exist closed surfaces. Recall that by the Hilbert  lemma,  the round sphere is  the only ovaloid satisfying (\ref{rw}) with   $a<0$.  When $a>0$, $K=a\kappa_1^2+b\kappa_1$ and the surface may have points where the curvature is negative.   Here we have in mind the constant mean curvature equation, that is, $a=-1$ in (\ref{rw}), because many of  the  surfaces that will appear in our classification of Section \ref{sec5} share similar properties with the rotational surfaces with constant mean curvature. These surfaces were characterized in 1841 by Delaunay as surfaces    generated by   a roulette of a conic foci along the rolling axis \cite{de}. They are planes and  catenoids (minimal case), spheres, unduloids and nodoids. Unduloids and nodoids are periodic surfaces along the axis where unduloids  are embedded while nodoids are not. Unduloids may be viewed as smooth deformations of the cylinder, and the transition between unduloids and nodoids occurs through spheres. 

{\it Convention:} Along this paper,  the principal curvatures of a rotational surface  are going to be denoted by $\kappa_1$ and $\kappa_2$, where $\kappa_1$ will be the curvature of the profile curve $\gamma$, while $\kappa_2$ denotes the normal curvature of the orbit of the rotation. 

One of the main goal of our paper is a variational characterization of the generating curve of a rotational linear Weingarten surface. We will prove that this curve is a critical point of an energy functional involving a power of the curvature $\kappa$ of the curve. 

As a consequence of our classification, we will obtain a complete description of the  rotational linear Weingarten surfaces  that we now summarize: see Figures \ref{fig-cla1}, \ref{fig-cla2} and \ref{fig-cla3}. Denote by $\gamma$ the generating curve.   If the surface does not meet the axis of rotation,  we give the next definitions:
\begin{enumerate}
\item {\it Catenoid-type} surfaces. The curve $\gamma$ is  a concave graph on some interval $I$ of the axis. These surfaces only appear when $a<0$ and $b=0$. There are two types depending if $I=\r$   ($-1\leq a<0$) or if $I$ is a bounded interval  ($a<-1$). The plane is  included here as an extremal case.
\item  {\it Unduloid-type} surfaces. Embedded surfaces which are periodic in the direction of the axis. Circular cylinders belong to this family.
\item  {\it Nodoid-type} surfaces. Non embedded surfaces which are periodic   in the direction of the axis and the curve $\gamma$ has loops towards the axis.   
\item  {\it Antinodoid-type} surfaces. Non embedded  surfaces which are periodic in the direction of the axis and the curve $\gamma$ has loops facing away from the axis.   
\item {\it Cylindrical antinodoid-type} surfaces. Non embedded surfaces asymptotic to a circular cylinder. The curve $\gamma$ has a single loop facing away from the axis.
\end{enumerate}

Now, we turn to those surfaces that meet (necessarily orthogonally)  the axis of rotation. All the surfaces have genus $0$ except in one case that the surface touches the axis at exactly one point.
\begin{enumerate}
\item {\it Ovaloids}. They are convex surfaces. The shape is like an oblate spheroid being more flat close to the axis as the parameter $a$ gets bigger. This case only occurs  when $a>0$.  Round spheres are included here.
\item {\it Vesicle-type} surfaces. Embedded closed surfaces where the two poles of the profile curve are close so the meridian presents two inflection points. These surfaces have concave regions around the poles. 
\item  {\it Pinched spheroids}. Limit case of the vesicle-type surfaces when the two poles coincide. The surface is tangentially immersed on the axis and bounds a solid three-dimensional torus. 
\item {\it Immersed spheroids}. Closed surfaces of genus $0$ that appear when the two poles of the vesicle-type surface pass their-self  through the axis. 
\end{enumerate}

This paper is organized as follows.  In Section \ref{sec2} we give a variational characterization of the generating curve of rotational surfaces verifying \eqref{rw}. Notice that this characterization is completely different from that one given in \cite{bg}, since the involved variational problems have nothing in common. Indeed, here, contrary to \cite{bg}, the extremal curves are going to be the meridians. Then, in Section \ref{sec3}, we show some properties about   symmetries of the  solutions of \eqref{rw}.  In Section \ref{sec4},   we consider the  case  $b=0$ in \eqref{rw}. Finally, in   section  Section \ref{sec5}, we give the classification when $b\neq0$. For this purpose, we distinguish between two cases, when the parameter $a$ in \eqref{rw} is positive or negative.

\begin{figure}[hbtp]
\begin{center}
\includegraphics[width=.3\textwidth]{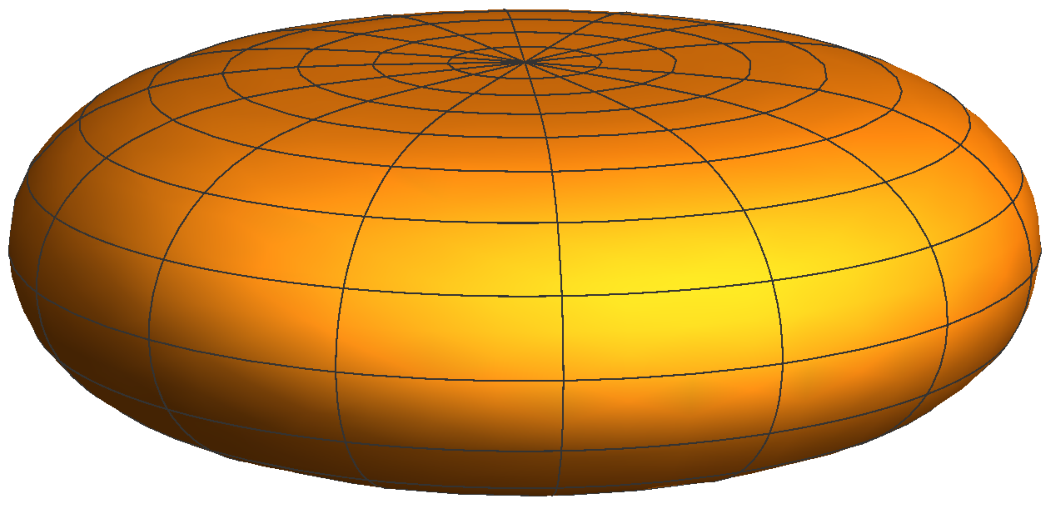}\quad \includegraphics[width=.3\textwidth]{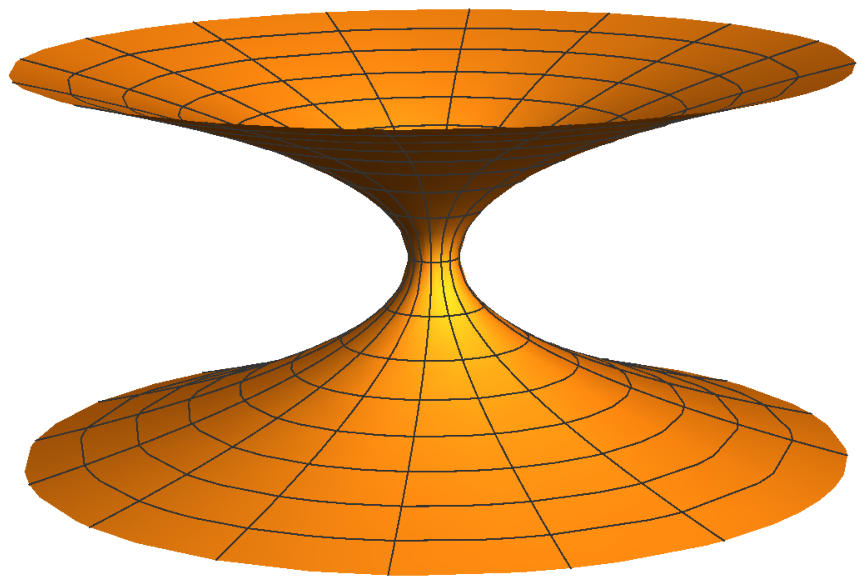}\quad \includegraphics[width=.3\textwidth]{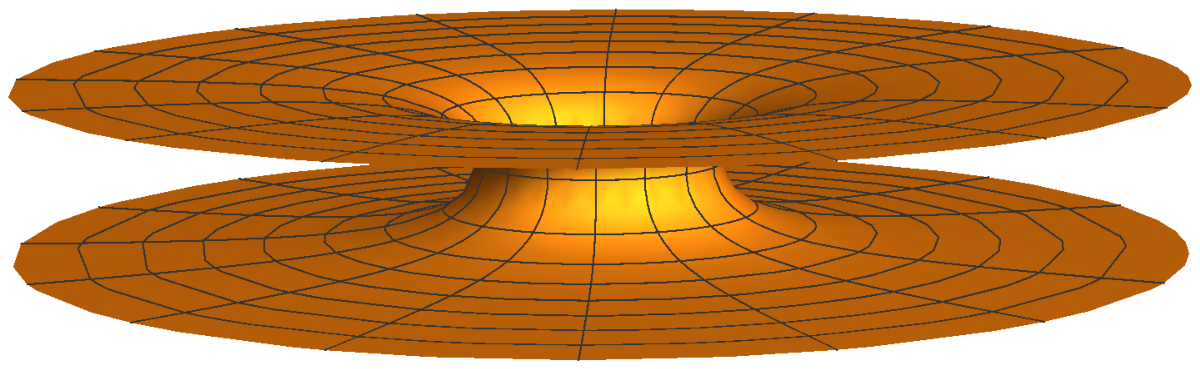}
\end{center}
\caption{Rotational surfaces satisfying the relation $\kappa_1=a\kappa_2$. From left to right: ovaloid ($a>0$),  catenoid-type with $a\in [-1,0)$ and catenoid-type with $a<-1$}\label{fig-cla1}
\end{figure}

\begin{figure}[hbtp]
\begin{center}
\includegraphics[width=.13\textwidth]{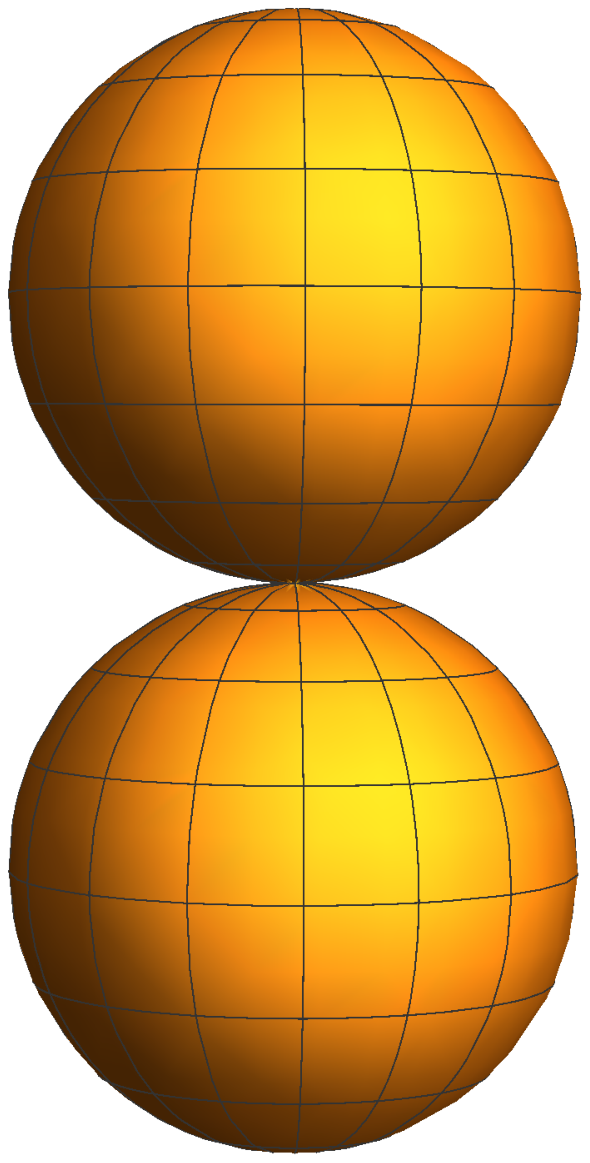}\quad \includegraphics[width=.35\textwidth]{figureb012.eps}\quad\includegraphics[width=.4\textwidth]{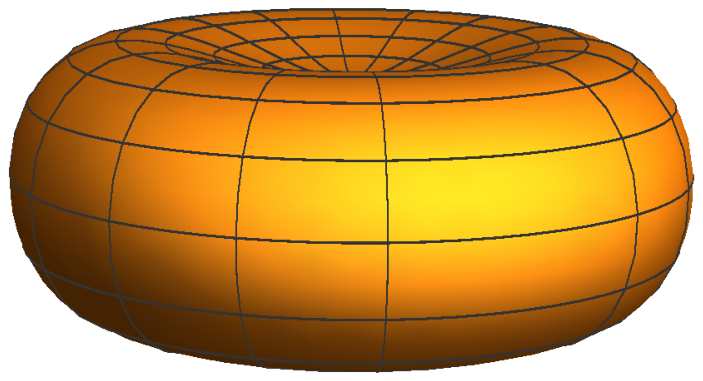}  \includegraphics[width=.2\textwidth]{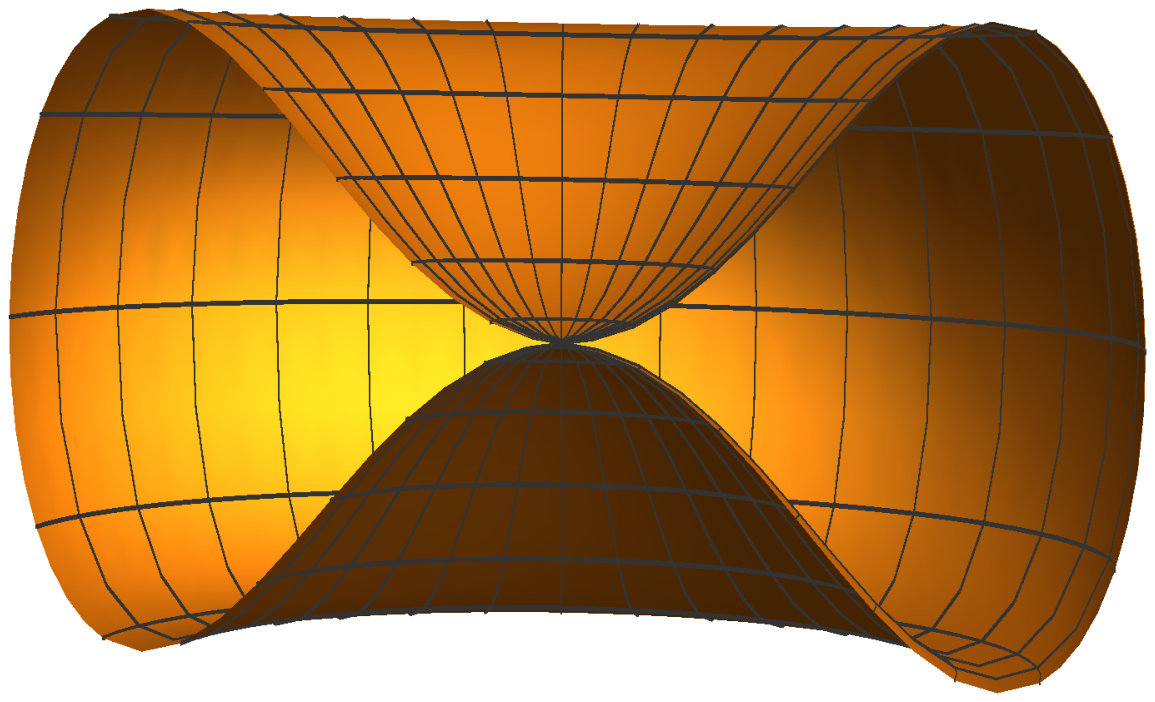}  \includegraphics[width=.2\textwidth]{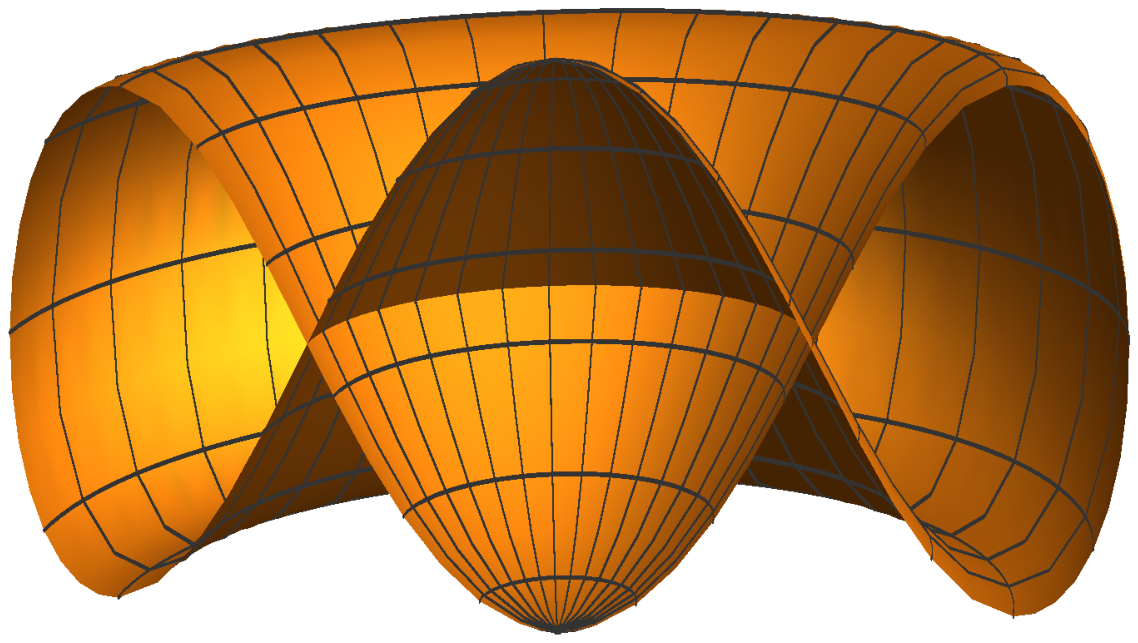} \includegraphics[width=.1\textwidth]{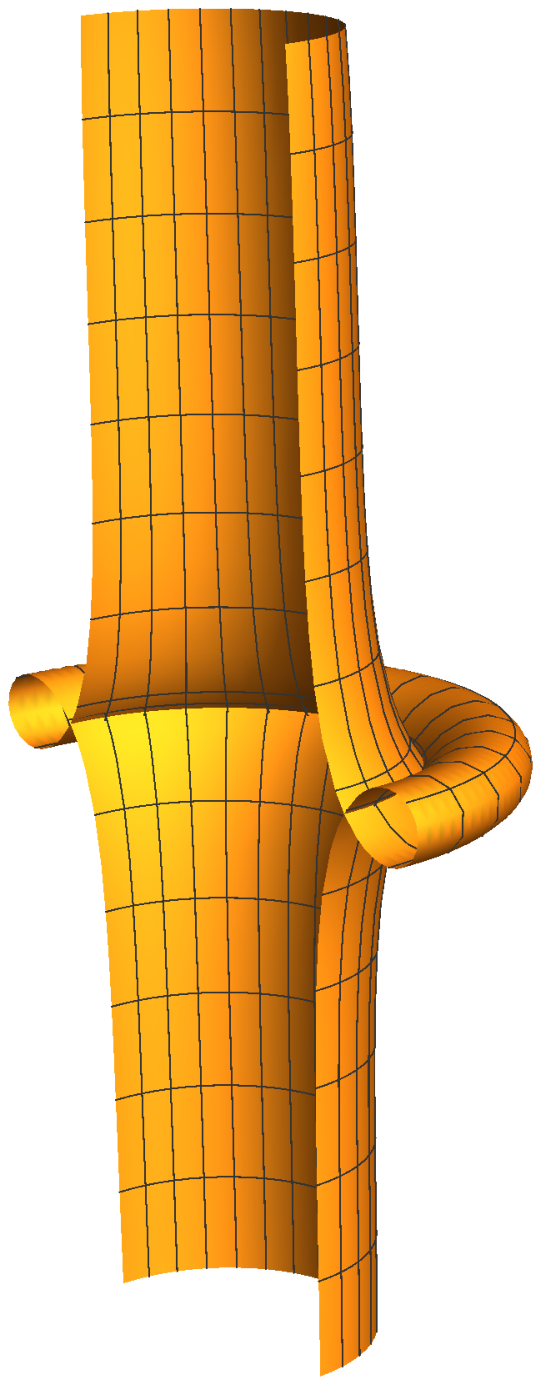} \includegraphics[width=.1\textwidth]{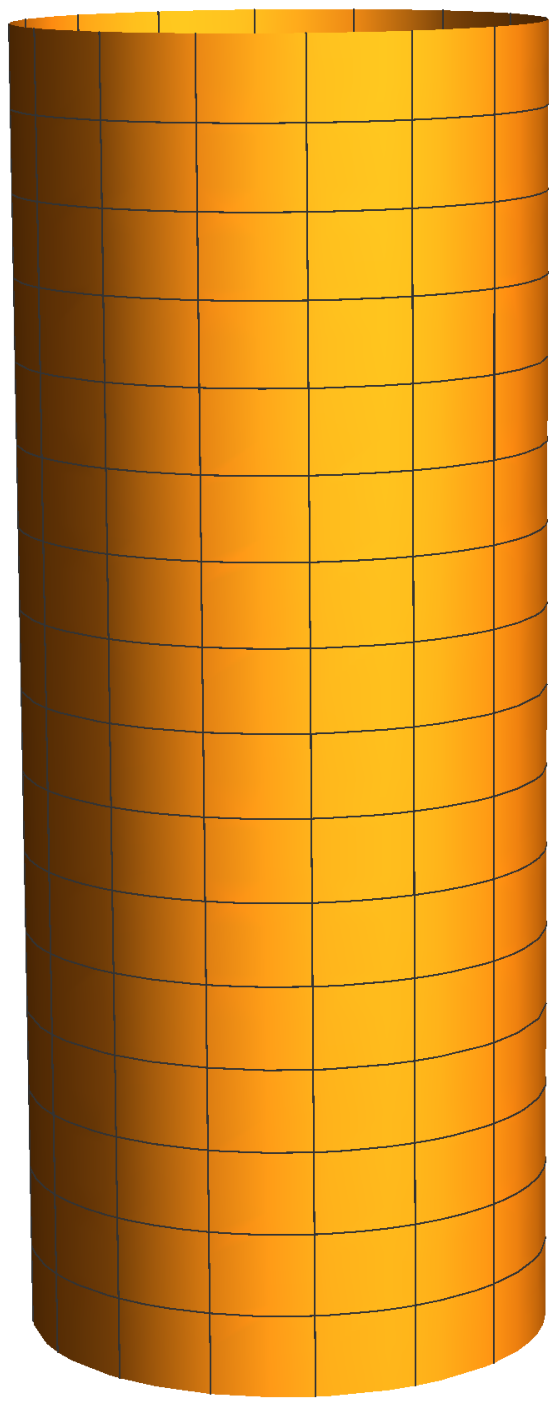} \includegraphics[width=.15\textwidth]{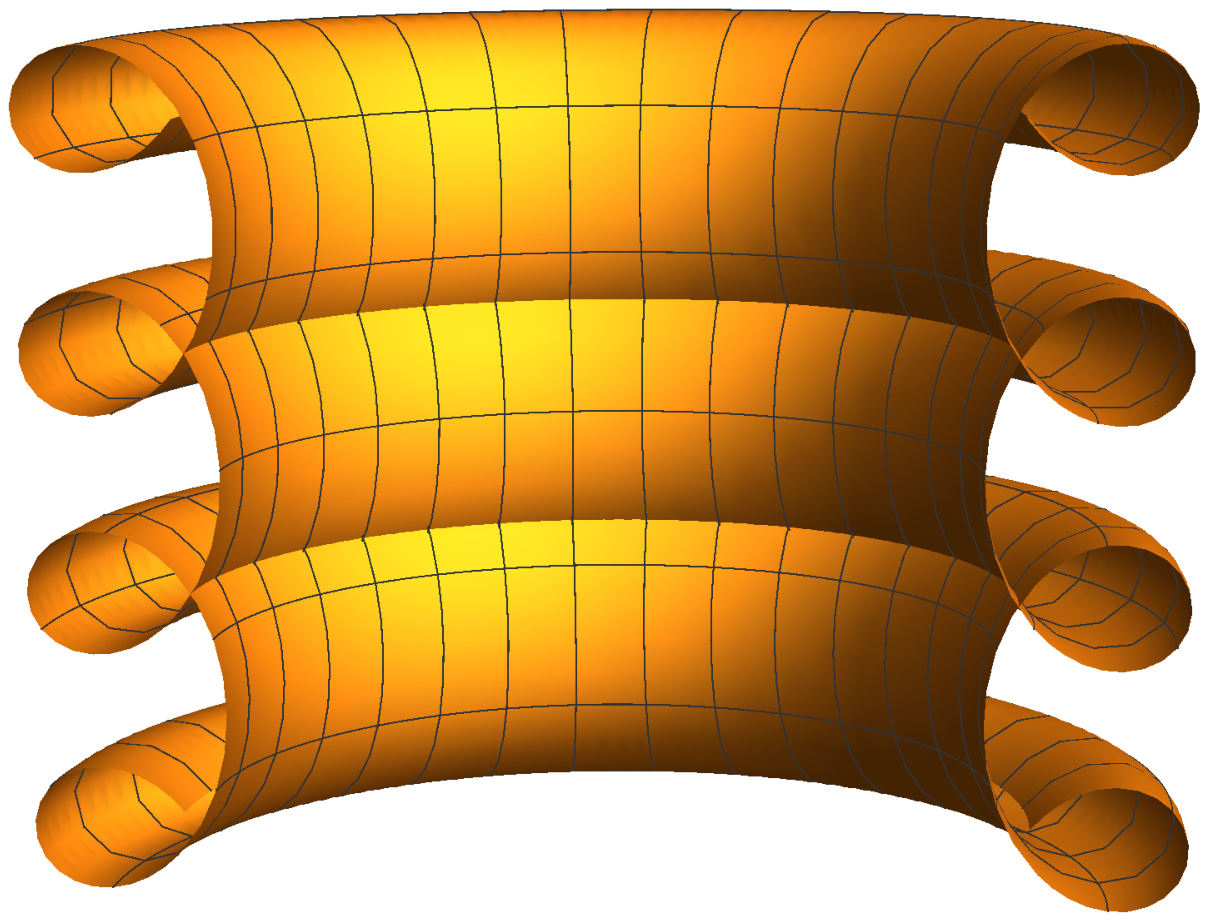}
\end{center}
\caption{Rotational surfaces satisfying the relation $\kappa_1=a\kappa_2+b$ with $a>0$. From left to right: sphere, ovaloid, vesicle-type, pinched spheroid, immersed spheroid, cylindrical antinodoid-type, circular cylinder, antinodoid-type}\label{fig-cla2}
\end{figure}

\begin{figure}[hbtp]
\begin{center}
\includegraphics[width=.2\textwidth]{figureb1.eps}  \includegraphics[width=.15\textwidth]{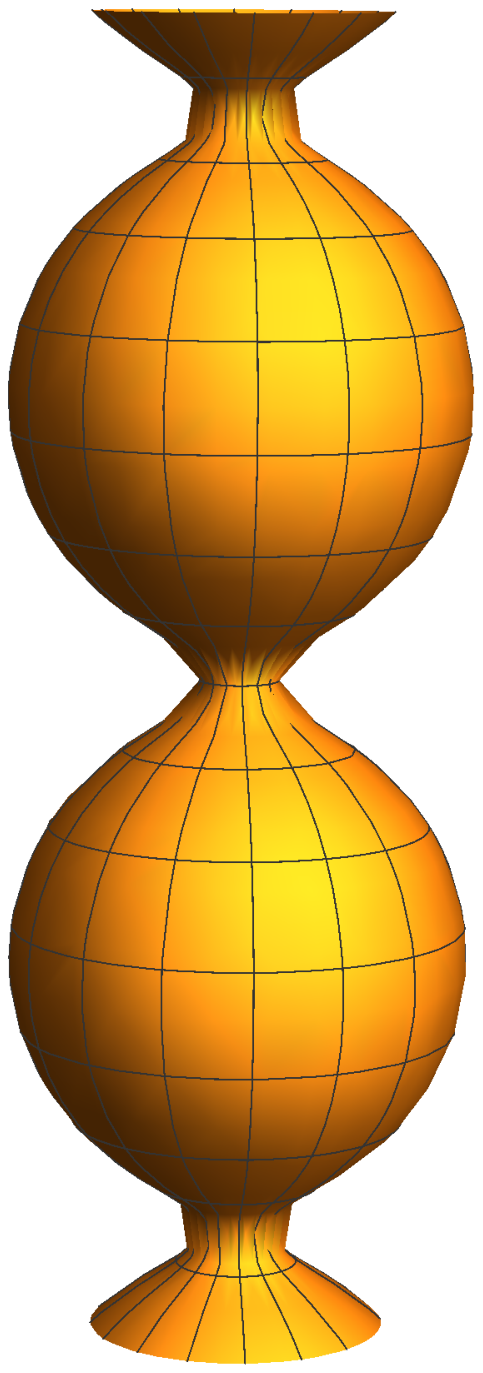}  \includegraphics[width=.15\textwidth]{figureb3.eps}  \includegraphics[width=.3\textwidth]{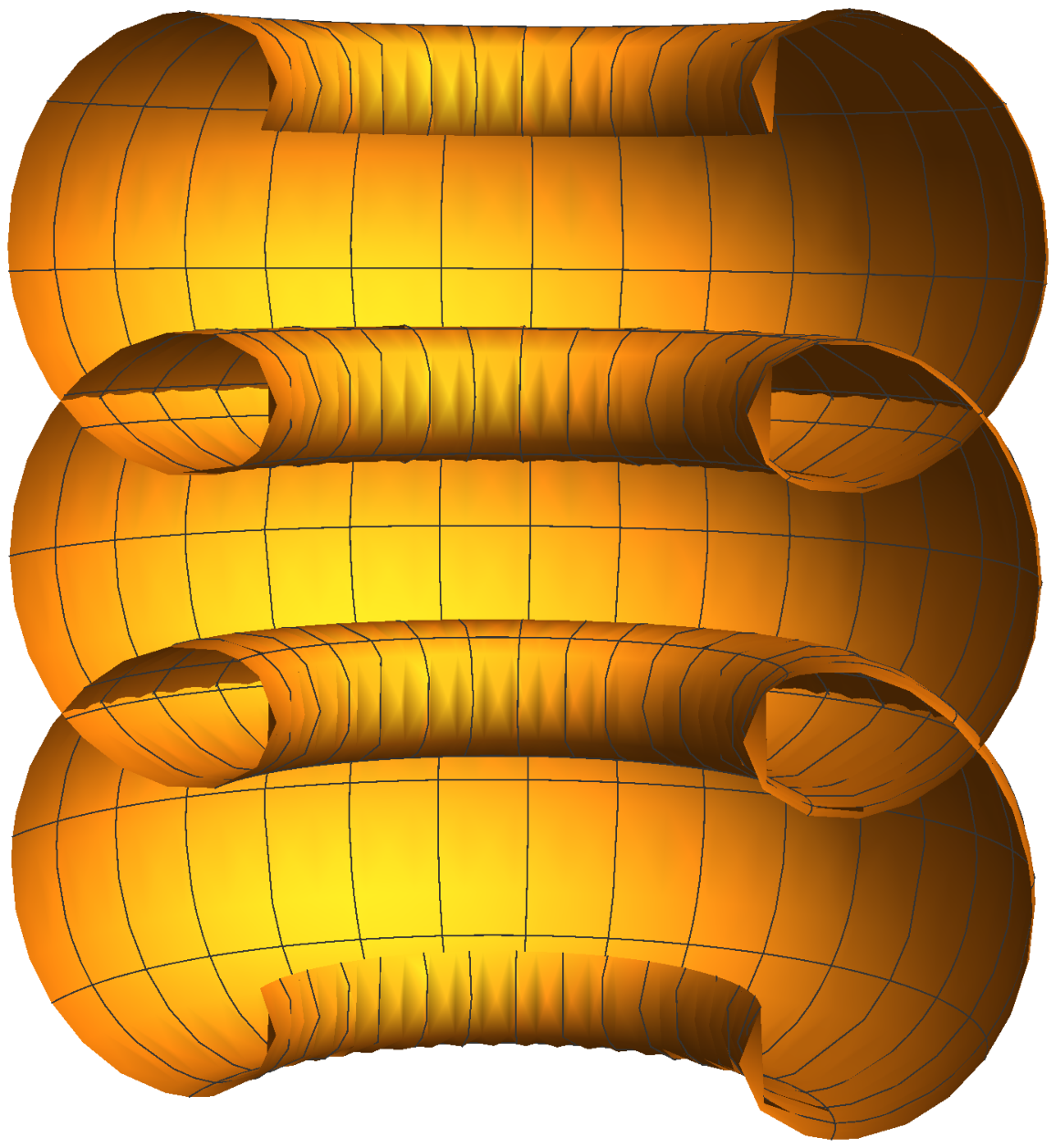}
\end{center}
\caption{Rotational surfaces satisfying the relation $\kappa_1=a\kappa_2+b$ with $a<0$. From left to right: sphere, unduloid-type, circular cylinder, nodoid-type}\label{fig-cla3}
\end{figure}

\section{Variational characterization of generating curves}\label{sec2}

Let $(x,y,z)$ be the canonical coordinates in the Euclidean space $\r^3$ and let $M\subset\r^3$ be a surface of revolution. Without loss of generality, we   assume that the rotational axis is the $z$-axis and that   its generating curve $\gamma$ is contained in the $xz$-plane. Let  $\gamma(s)=(x(s),0,z(s))$, $s\in I\subset\r$, be  parametrized by the arc-length, and thus $x'(s)=\cos\theta(s)$ and $z'(s)=\sin\theta(s)$ for a certain function $\theta$.  Recall our convention that $\kappa_1$ is the curvature of the profile curve. If $X(s,\phi)=(x(s)\cos\phi,x(s)\sin\phi,z(s))$ is a parametrization of $M$, then the principal curvatures are
$$\kappa_1(s)=\theta'(s),\quad\kappa_2(s)=\frac{\sin\theta(s)}{x(s)},\quad s\in I.$$
Notice that they are independent of the rotation angle $\phi$. Therefore, a surface of revolution $M$ satisfying the linear relation (\ref{rw}) is characterized by the following system of ordinary differential equations
\begin{equation}\label{eq1}
	\begin{split}
		x'(s)&=\cos\theta(s)\\
		z'(s)&=\sin\theta(s)\\
		\theta'(s)&=a\frac{\sin\theta(s)}{x(s)}+b.
	\end{split}
\end{equation}
In this  section  we   characterize variationally the   curve $\gamma(s)$. Let us denote $\Omega_{p_0p_1}$   the space of smooth regular curves in $\mathbb{R}^2$ joining two fixed points $p_0$ and $p_1$ of $\mathbb{R}^2$. Let  $\Omega_{p_0p_1}^*$ be the subspace of  those curves of $\Omega_{p_0p_1}$ satisfying $| \gamma ''|>\mu$, that is, 
\begin{equation}
	\Omega_{p_0p_1}^*=\{\beta:\left[a_0,a_1\right]\rightarrow\mathbb{R}^2: \beta(a_i)=p_i, i\in\{0,1\}, \frac{d\beta}{dt}(t)\neq 0, \forall t\in \left[a_0,a_1\right], \langle\beta'',\beta''\rangle^{\frac{1}{2}}>\mu\}\, . \nonumber
\end{equation}

For a curve $\gamma:\left[a_0,a_1\right]\rightarrow\mathbb{R}^2$ we take a variation of $\gamma$, $\Gamma=\Gamma(t,\bar{t}):\left[a_0,a_1\right]\times(-\varepsilon,\varepsilon)\rightarrow\mathbb{R}^2$ with $\Gamma(t,0)=\gamma(t)$. Associated to this variation, we have the vector field $\widetilde{W}=\widetilde{W}(t)=\frac{\partial\Gamma}{\partial t}(t,0)$ along the curve $\gamma(t)$. Moreover, if $W$ is any proper vector field along a curve $\gamma\in\Omega_{p_0p_1}$, then it is known that there exists a variation of $\gamma$ by immersed curves in $\mathbb{R}^2$, $\Gamma:\left[a_0,a_1\right]\times(-\varepsilon,\varepsilon)\rightarrow\mathbb{R}^2$, $(t,\bar{t})\rightarrow\Gamma(t,\bar{t})$, whose variation vector field is $W$. Indeed, if $\gamma\in\Omega_{p_0p_1}^*$, smoothness of $\gamma$ and $| \gamma ''|$ implies that there exists a sub-variation $\hat{\Gamma}$ of $\Gamma$ with the same variation vector field $W$, such that any variation curve in $\hat{\Gamma}$ belongs to $\Omega_{p_0p_1}^*$.

For each $p,\mu\in\r$, define   the curvature energy functional
\begin{equation}
	\mathcal{F}^{\mu *}_p(\gamma)=\int_o^L\left(\theta'(s)-\mu\right)^p\, ds\,, \label{fpmu*}
\end{equation}
acting on $\Omega_{p_0p_1}^*$. This  functional has been studied in \cite{gp} where their correspondence Euler-Lagrange equations have been related to solutions of a generalized Ermakov-Milne-Pinney ordinary differential equation. When $p=0$,  \eqref{fpmu*} is nothing but the length functional whose critical curves are geodesics; and if $p=1$, then \eqref{fpmu*} is, basically, the total curvature functional in which case extremals are the planar curves (see \cite{gp} and references therein). From now on, we discard these two cases, so   $p\neq0$, $1$.

In this section, we also  need to define  the energy
\begin{equation}
	\mathcal{F}_\nu(\gamma)=\int_o^L\, e^{\nu\theta'(s)}\, ds\, \label{fnu}
\end{equation}
among curves immersed in $\Omega_{p_0p_1}$ and where $\nu\in\mathbb{R}-\{0\}$. 

If $\gamma(s)$ is a geodesic of $\mathbb{R}^2$, that is, if $\theta'=0$, then it is clear that $\gamma(s)$ is a global extremal curve of \eqref{fpmu*} and \eqref{fnu}, provided they act on a space of $L^1$ integrable curves whenever it makes sense. Therefore, if $M$ is a plane or a circular cylinder, then its profile curve is an extremal curve of either \eqref{fpmu*} or \eqref{fnu}. In fact, this result can be generalized to all rotational linear Weingarten surfaces. Let us assume $\gamma(s)$ is not a geodesic, then we obtain the main theorem of this section.

\begin{theorem}\label{1direction} Let $M$ be a rotational linear Weingarten surface  and let $\gamma(s)=\left(x(s),0,z(s)\right)$ be its generating curve. Then,
	\begin{enumerate}
		\item If $a\neq 1$, $\gamma$ is an extremal curve (under arbitrary boundary conditions) of $\mathcal{F}_p^{\mu *}$ for
		\begin{equation} \mu=\frac{-b}{a-1}\,,\quad p=\frac{a}{a-1}\,.\nonumber\end{equation}
		\item If $a=1$ and $b\neq 0$, then $\gamma$ satisfies the Euler-Lagrange equation of $\mathcal{F}_\nu$ for
		\begin{equation} \nu=\frac{1}{b}\,.\nonumber\end{equation}
	\end{enumerate}
\end{theorem}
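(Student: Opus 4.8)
The plan is to reduce both statements to the verification of a single ordinary differential equation, namely the Euler--Lagrange equation of a planar curvature functional of the form $\int f(\theta')\,ds$, and then to check that the tangent angle $\theta$ of the profile curve $\gamma$ satisfies it by means of the system \eqref{eq1}. Writing $\kappa=\theta'$ for the curvature of the plane curve $\gamma$, the first variation computation for such a functional (carried out in \cite{gp}) yields the Euler--Lagrange equation
\begin{equation}
\frac{d^{2}}{ds^{2}}f'(\kappa)+\kappa^{2}f'(\kappa)-\kappa\,f(\kappa)=0,\nonumber
\end{equation}
where $f'=df/d\kappa$ and the outer derivatives are taken with respect to arc length. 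Since we have already discarded the geodesic case and the clause ``under arbitrary boundary conditions'' absorbs the boundary terms of the first variation, it suffices to verify this local identity along $\gamma$ for $f(\kappa)=(\kappa-\mu)^{p}$ in part (1) and for $f(\kappa)=e^{\nu\kappa}$ in part (2).

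First I would record two kinematic identities extracted from \eqref{eq1}. Setting $C=\cos\theta/x=(\log x)'$ and recalling $\kappa_{1}=\theta'$, $\kappa_{2}=\sin\theta/x$, differentiation along $\gamma$ gives
\begin{equation}
\kappa_{2}'=C\,(\kappa_{1}-\kappa_{2}),\qquad C'=-\kappa_{1}\kappa_{2}-C^{2}.\nonumber
\end{equation}
These two relations allow me to rewrite every $s$-derivative of $\kappa_{1}$ as an algebraic expression in $\kappa_{1},\kappa_{2}$ and $C$, which is exactly what the Euler--Lagrange equation demands once $f$ has been specialized.

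For part (1), the decisive observation is that the affine relation \eqref{rw} becomes homogeneous after the shift $\mu=-b/(a-1)$: indeed $\kappa_{1}-\mu=a(\kappa_{2}-\mu)$. Combined with the first identity above this gives $\kappa_{1}'=(a-1)\,C\,(\kappa_{1}-\mu)$, equivalently the first integral $\kappa_{1}-\mu=Kx^{a-1}$ since $C=(\log x)'$. Substituting $f(\kappa)=(\kappa-\mu)^{p}$ with $p=a/(a-1)$ into the Euler--Lagrange equation and expanding, the terms proportional to $C^{2}(\kappa_{1}-\mu)^{2}$ cancel --- and this cancellation is precisely what forces the exponent $p=a/(a-1)$ --- while the residue factors as $\kappa_{1}(\kappa_{1}-\mu)^{2}\bigl[\tfrac{a}{a-1}(\kappa_{1}-\kappa_{2})-(\kappa_{1}-\mu)\bigr]$, whose bracket vanishes identically by the homogenized relation. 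This establishes extremality.

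Part (2) is the degenerate limit $a=1$, where the homogenization fails and the exponential functional must be used instead. Here \eqref{rw} reads $\kappa_{1}-\kappa_{2}=b$, a constant, so the first identity collapses to $\kappa_{1}'=bC$ and hence $\kappa_{1}''=b(-\kappa_{1}\kappa_{2}-C^{2})$. Inserting $f(\kappa)=e^{\nu\kappa}$ with $\nu=1/b$ into the Euler--Lagrange equation and dividing by $e^{\nu\kappa}$, the $C^{2}$ terms again cancel and the remainder reduces to $b^{-1}\kappa_{1}(\kappa_{1}-\kappa_{2})-\kappa_{1}=\kappa_{1}-\kappa_{1}=0$ upon using $\kappa_{1}-\kappa_{2}=b$. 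The main obstacle I anticipate is the algebraic bookkeeping of the cancellation in part (1): one must group the expanded Euler--Lagrange equation so that the highest-weight terms in $C$ annihilate, which happens only for the stated value of $p$. A secondary point is to justify that the planar Euler--Lagrange operator displayed above is indeed the correct one and that its boundary contributions are covered by the prescribed boundary data.
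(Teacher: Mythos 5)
Your proposal is correct, and while it shares the paper's overall architecture (first identify the Euler--Lagrange equation of the curvature energy, then show the profile curve satisfies it), the second step is carried out by a genuinely different route. The paper assumes $\theta'$ non-constant, invokes the inverse function theorem to write $x=\dot F(\theta')$, integrates \eqref{nec} into the Ermakov--Milne--Pinney-type equation \eqref{nec2}, and then \emph{solves} the algebraic relation \eqref{int} for $F$, thereby discovering the Lagrangian and, as a by-product, the explicit parametrizations \eqref{critical1}--\eqref{critical2} that are needed later for the converse (Proposition \ref{converse}). You instead verify the Euler--Lagrange identity directly by substitution, using the structure equations $\kappa_2'=C(\kappa_1-\kappa_2)$ and $C'=-\kappa_1\kappa_2-C^2$ with $C=(\log x)'$, together with the homogenized relation $\kappa_1-\mu=a(\kappa_2-\mu)$ and its consequence $\kappa_1'=(a-1)C(\kappa_1-\mu)$; I checked the cancellations and they are exactly as you describe, with $(p-1)(a-1)=1$ killing the $C^2$-terms and the final bracket vanishing by $\kappa_1-\kappa_2=\tfrac{a-1}{a}(\kappa_1-\mu)$. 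Your route is more elementary and avoids the local invertibility of $s\mapsto\theta'(s)$ that the paper's substitution tacitly requires, at the cost of presupposing the values of $\mu$, $p$, $\nu$ rather than deriving them (though your observation that the cancellation \emph{forces} $p=a/(a-1)$ partially recovers the derivation), and of not producing the parametrizations used for the converse. Two small points: the factor you call $\kappa_1(\kappa_1-\mu)^2[\cdots]$ should carry the exponent $p-1$ rather than $2$ (harmless, since the bracket vanishes identically), and you should dispatch separately the constant-curvature case $\theta'\equiv\mu$, where $u=\kappa_1-\mu\equiv 0$ makes $f'(\kappa)=p\,u^{\,p-1}$ degenerate for $p<1$; the paper treats that case on its own as a global minimizer of \eqref{fpmu*}.
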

\begin{proof}
	The first step of the proof consists on computing the Euler-Lagrange equations associated to \eqref{fpmu*} and to \eqref{fnu} when acting on $\Omega_{p_0p_1}^*$ and $\Omega_{p_0p_1}$, respectively. For this purpose, consider $\gamma: I\rightarrow \mathbb{R}^2$ a regular immersed curve  joining $p_0$ and $p_1$ is an extremal curve of $\mathcal{F}^{\mu *}_p$. Then, if $W$ is a proper vector field along $\gamma$, that is, an infinitesimal variation of the curve, we find 
	\begin{equation}
		\partial\mathcal{F}_p^{\mu *}(W)=\frac{\partial}{\partial\varepsilon}_{\lvert_{\varepsilon=0}}\mathcal{F}_p^{\mu *}(\gamma+\varepsilon W)=0\,,\nonumber
	\end{equation}
	that is, after reparametrizing the curves of the variation so that all of them have the same fixed domain $\left[a,b\right]$,
	\begin{eqnarray}
		0&=&\frac{\partial}{\partial\varepsilon}_{\lvert_{\varepsilon=0}}\int_0^{L(w)}\left(\lvert \left(\gamma+\varepsilon W\right)''\rvert-\mu\right)^p\, ds=\nonumber \\&=&\frac{\partial}{\partial \varepsilon}_{\lvert_{\varepsilon=0}}\int_{a_o}^{a_1}\left( \lvert\left( \gamma+\varepsilon W\right)''\rvert-\mu\right)^p\, \lvert \left(\gamma+\varepsilon W\right)'\rvert \, dt=\nonumber\\&=&\int_0^{L(w)}(\theta'(s)-\mu)^{p-1}\left(\langle \frac{p}{\theta'(s)}\gamma'',W''\rangle+\langle\left((1-2p)\theta'(s)-\mu\right)\gamma',W'\rangle\right)\, ds\,.\nonumber
	\end{eqnarray}
	It follows, after integrating by parts twice,
		\begin{equation}
		0=\int_0^L\langle\mathcal{E}(\gamma),W\rangle\, ds+\mathcal{B}(W,\gamma)\lvert_0^L\, ,\nonumber
	\end{equation}
	where the Euler-Lagrange operator is
	\begin{equation}
		\mathcal{E}(\gamma)=\left(\frac{p}{\theta'(s)}\frac{d}{ds}\left( (\theta'(s)-\mu)^{p-1}\right)\gamma''+(\theta'(s)-\mu)^{p-1}\left((p-1)\theta'(s)+\mu\right)\gamma'\right)'\, ,\nonumber
	\end{equation}
 where the boundary term $\mathcal{B}(W, \gamma)$ vanishes under suitable boundary conditions. Thus, as $\gamma$ is a critical curve  under any boundary conditions, it follows  by standard arguments   that $\mathcal{E}(\gamma)=0$, that is, the Euler-Lagrange equation of   $\mathcal{F}_p^{\mu *}$ acting on $\Omega_{p_0p_1}^*$ is 	\begin{equation}
		\frac{d^2}{ds^2}\left(\left(\theta'(s)-\mu\right)^{p-1}\right)+\theta'(s)^2\left(\theta'(s)-\mu\right)^{p-1}-\frac{\theta'(s)}{p}\left(\theta'(s)-\mu\right)^p=0\, . \label{el1}
	\end{equation}
Similarly, the Euler-Lagrange equation of  $\mathcal{F}_{\nu}$ acting on $\Omega_{p_0p_1}$ is	\begin{equation}
		\frac{d^2}{ds^2}\left( e^{\nu\theta'(s)}\right)+\theta'(s)^2 e^{\nu\theta'(s)}-\frac{\theta'(s)}{\nu} e^{\nu\theta'(s)}=0\, . \label{el2}
	\end{equation}
	Now, for the second and last step, suppose that $\gamma(s)=(x(s),0,z(s))$ satisfies \eqref{rw}. The first equation of \eqref{eq1} becomes
	\begin{equation}
	\left(\frac{x''(s)}{\theta'(s)}\right)'+x'(s)\theta'(s)=0\,, \label{nec}
	\end{equation}
	while, the last equation of \eqref{eq1} is
	\begin{equation}
	\theta'(s)=-a\frac{x''(s)}{x(s)\theta'(s)}+b\,.\label{nec1}
	\end{equation}
	Assume first that $\theta'(s)$ is constant. If $a\neq 1$, then  the last equation of \eqref{eq1} gives 
	\begin{equation}
	\theta'(s)=\theta'_0=\frac{-b}{a-1}=\mu\,.\nonumber
	\end{equation}
	This case represents a global minimum of \eqref{fpmu*}   acting on a space of curves verifying $\left(\theta'(s)-\mu\right)^p\in L^1(\left[a_0,a_1\right])$. Now, if $a=1$, by \eqref{eq1} it follows that $b=0$, which is out of our consideration.
	On the other hand, if $\theta'(s)$ is not constant,   by the inverse function theorem, we can suppose that $s$ is a function of $\theta'$, and therefore, $x(s)=x(\theta')=\dot{F}(\theta')$ for some smooth function $F$:  here the derivative with respect to $\theta'$ is denoted by  the upper dot. Then \eqref{nec} can be integrated obtaining
	\begin{equation}
	\dot{F}_{ss}+\theta'(s)^2\dot{F}-\theta'(s)\left( F+\lambda\right)=0\, ,\label{nec2}
	\end{equation}
	for a constant $\lambda$.
	Furthermore, combining \eqref{nec2} with the last equation of \eqref{eq1}, \eqref{nec1}, and $x(s)=\dot{F}(\theta')$ we find that
	\begin{equation}
	\dot{F}\left((a-1)\theta'+b\right)=a(F+\lambda)\,. \label{int}
	\end{equation}
	Consequently, if $a=1$, 
	\begin{equation}
	F(\theta')=e^{\frac{1}{b}\theta'}-\lambda\,,\label{Fnu}
	\end{equation}
	which implies that Equation \eqref{nec2} boils down to the Euler-Lagrange equation \eqref{el2} for $\nu=1/b$. 
	Now, if $a\neq 1$, then 
	\begin{equation}
	F(\theta')=\left(\theta'+\frac{b}{a-1}\right)^{\frac{a}{a-1}}-\lambda\,,\label{Fpmu}
	\end{equation}
	and \eqref{nec2} is precisely \eqref{el1} for $\mu= -b/(a-1)$ and $p=a/(a-1)$. This finishes the proof.
\end{proof}

Moreover, the converse of Theorem \ref{1direction} is also true. Indeed, suppose $\gamma$ is a critical curve with   constant curvature. If $\gamma(s)$ is a straight line, then $\gamma$  generates a plane, a cone or a right cylinder. Observe that the cone is the only one which does not satisfy \eqref{rw}. If $\gamma(s)$ is a circle, then $M$ is either a sphere or a tori of revolution. The sphere satisfies \eqref{rw} whereas it is easy to check   that the torus of revolution is not a  linear Weingarten surface. 

On the other hand, from previous proof we derive that $x(s)=\dot{F}(\theta'(s))$ for certain functions $\dot{F}$, \eqref{Fnu} and \eqref{Fpmu}. Therefore, any critical curve of $\mathcal{F}_p^{\mu *}$ in the $xz$-plane with non-constant curvature can be parametrized, up to rigid motions, as
\begin{equation}
	\gamma(s)=d\left(p\left(\theta'-\mu\right)^{p-1} ,0 ,\int \left(\theta'-\mu\right)^{p-1}\left(\left(p-1\right)\theta'+\mu\right)ds \right)\, , \label{critical1}
\end{equation}
for some positive constant $d$. Using the Euler-Lagrange equation \eqref{el1}, it is easy to check that the rotational surface generated by $\gamma$   satisfies the relation \eqref{rw} between its principal curvatures. 

Similarly, up to rigid motions, we can parametrize any extremal curve $\gamma$ of $\mathcal{F}_\nu$  in the $xz$-plane as
\begin{equation}
	\gamma(s)=d\left(\nu e^{\nu\theta'} ,0 ,\int\left(\nu\theta'-1\right)e^{\nu\theta'}ds \right)\, , \label{critical2}
\end{equation}
where, again $d$ is any positive constant.

Then, arguing as before, we conclude with the converse of Theorem  \ref{1direction}. We sum up this result in the following proposition.

\begin{proposition}\label{converse} Let $\gamma$ denote a curve in $\mathbb{R}^2$ with non-constant curvature. If $\gamma$ is a critical curve of $\mathcal{F}_p^{\mu *}$, then $\gamma$  can be parametrized by \eqref{critical1}, up to rigid motions. Similarly, if $\gamma$ is critical of    $\mathcal{F}_\nu$, then $\gamma$  can be parametrized by \eqref{critical2}, again up to rigid motions. Moreover, in both cases, the rotational surface generated by rotating the critical curve $\gamma$ around the $z$-axis satisfies the Weingarten relation \eqref{rw}, where 
\begin{equation}
a=\frac{p}{p-1}\,, \quad b=\frac{-\mu}{p-1}\,, \nonumber
\end{equation}
if the functional is $\mathcal{F}_p^{\mu *}$, or
\begin{equation}
a=1\,,\quad b=\frac{1}{\nu}\,,\nonumber
\end{equation}
if $\gamma$ is critical for $\mathcal{F}_\nu$.
\end{proposition}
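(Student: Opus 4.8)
The plan is to run the computation of Theorem~\ref{1direction} backwards. That computation realised the Weingarten condition through the chain \eqref{rw}$\,\Rightarrow\,$\eqref{nec},\eqref{nec1}$\,\Rightarrow\,$\eqref{nec2},\eqref{int}$\,\Rightarrow\,$\eqref{el1}, and, crucially, once the curvature $\theta'$ is non-constant every link is reversible: \eqref{nec} is the purely kinematic identity satisfied by \emph{any} arc-length planar curve, the substitution $x(s)=\dot F(\theta'(s))$ is invertible by the inverse function theorem, and \eqref{int} is simply the first-order ODE solved by \eqref{Fpmu}. I will carry this out for $\mathcal{F}_p^{\mu *}$ (the case $a\neq1$); the functional $\mathcal{F}_\nu$ ($a=1$) is handled identically, with \eqref{Fnu}, \eqref{el2} and \eqref{critical2} in place of \eqref{Fpmu}, \eqref{el1} and \eqref{critical1}, and there the analogous computation gives $\kappa_2=\theta'-1/\nu$, hence $a=1$, $b=1/\nu$.

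First, the parametrization. If $\gamma$ is critical for $\mathcal{F}_p^{\mu *}$ with non-constant curvature, then $\theta'$ solves \eqref{el1}. Comparing \eqref{el1} with \eqref{nec2} term by term identifies $\dot F=(\theta'-\mu)^{p-1}$ and $F+\lambda=\tfrac1p(\theta'-\mu)^p$, which is consistent since $\tfrac{d}{d\theta'}\bigl(\tfrac1p(\theta'-\mu)^p\bigr)=(\theta'-\mu)^{p-1}$ and agrees with \eqref{Fpmu} up to the multiplicative constant. Since \eqref{nec2} is exactly \eqref{nec} read through $x(s)=\dot F(\theta'(s))$, this forces $x(s)=d\,p\,(\theta'-\mu)^{p-1}$, the constant $d$ being admissible because \eqref{nec} is homogeneous in $x$. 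The third coordinate is then recovered by integrating the kinematic relation $z''=\theta'x'$ (equivalently $z'=\sin\theta$): substituting $x$ and integrating in $\theta'$ produces $z'=d(\theta'-\mu)^{p-1}\bigl((p-1)\theta'+\mu\bigr)$, which together with $x(s)$ is precisely \eqref{critical1}. The clause ``up to rigid motions'' absorbs the translation constants in $x,z$ and the choice of orientation.

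Second, the Weingarten relation. For $\gamma$ as in \eqref{critical1} the principal curvatures of the rotational surface are $\kappa_1=\theta'$ and $\kappa_2=\sin\theta/x=z'/x$, so
\[\kappa_2=\frac{d(\theta'-\mu)^{p-1}\bigl((p-1)\theta'+\mu\bigr)}{d\,p\,(\theta'-\mu)^{p-1}}=\frac{(p-1)\theta'+\mu}{p},\]
whence a one-line algebraic check gives $a\kappa_2+b=\theta'=\kappa_1$ for $a=p/(p-1)$ and $b=-\mu/(p-1)$. Equivalently, and more in keeping with ``arguing as before'', eliminating $F+\lambda$ between \eqref{nec2} and \eqref{int} reproduces exactly \eqref{nec1}, i.e.\ the third equation of \eqref{eq1}; as the first two equations of \eqref{eq1} hold automatically for an arc-length planar curve, the surface satisfies \eqref{rw}. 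Consistency of these constants with Theorem~\ref{1direction} is the bookkeeping $p=a/(a-1)\Leftrightarrow a=p/(p-1)$ and $\mu=-b/(a-1)\Leftrightarrow b=-\mu/(p-1)$.

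The \textbf{main obstacle} is not this algebra but the legitimacy of the parametrization: one must certify that the curve reconstructed from a solution $\theta'(s)$ of \eqref{el1} is a genuine unit-speed planar curve whose tangent angle is $\theta$ and whose curvature is $\theta'$. This requires exhibiting a first integral of \eqref{el1} that keeps $x'^2+z'^2$ constant along solutions, thereby fixing the correct normalization of $d$ and identifying $\theta$ with the tangent angle; without it, \eqref{critical1} is only a formal expression. A secondary technical point is that $x=\dot F(\theta')$ is invertible only where $\theta'$ is strictly monotone, so the reconstruction is first done on such intervals and then continued; the hypothesis of non-constant curvature is used precisely at this step and to exclude the geodesic/constant-curvature cases treated separately.
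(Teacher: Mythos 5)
Your argument is correct and follows essentially the same route as the paper: reverse the chain of the proof of Theorem \ref{1direction}, identify $x=\dot F(\theta')$ with $F$ as in \eqref{Fpmu} (resp.\ \eqref{Fnu}) to obtain \eqref{critical1} (resp.\ \eqref{critical2}), and then read off $\kappa_2=z'/x=\left((p-1)\theta'+\mu\right)/p$ to verify \eqref{rw}. The one step you flag as an obstacle --- why, after a suitable rigid motion, the $x$-coordinate of a critical curve must equal a constant multiple of $(\theta'-\mu)^{p-1}$ with no additive term --- is equally implicit in the paper; it can be closed by noting that \eqref{el1} implies $(\theta'-\mu)^{p-1}$ satisfies the kinematic equation \eqref{nec}, whose solution space is spanned by $1$, $\int\cos\theta\,ds$ and $\int\sin\theta\,ds$, so for non-constant $\theta'$ the rotation and $x$-translation freedom aligns it exactly with $x$.
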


As a consequence   of this variational characterization, we prove that, except spheres,  there are not closed rotational surface  for the pure linear case, that is, for $b=0$ in \eqref{rw} (see also Theorem \ref{t-w1}). Let  $M$ be a closed rotational linear Weingarten surface. First, notice that if $a=1$ and $b=0$, then $M$ must be a totally umbilical surface. Therefore, we assume now $a\neq 1$, and then, from our variational characterization (Theorem \ref{1direction} and Proposition  \ref{converse}), its generating curve $\gamma$ is critical for \eqref{fpmu*}. If the critical curve $\gamma$ has constant curvature, then as mentioned above, it generates either a plane, a cylinder or a sphere. Thus, up to here, the only closed surfaces is the sphere, which has genus 0.

From now on, we are going to consider that $\gamma$ has non-constant curvature. In order $M$ to be closed, we need either $\gamma$ to be closed or that it cuts the axis of rotation. In the latter, $M$ cannot be a torus. That is, in order to look for rotational linear Weingarten tori for $b=0$, we must look for closed critical curves of \eqref{fpmu*}. 

\begin{proposition}\label{noclosed} There are no closed critical curves with non-constant curvature of $\mathcal{F}_p^{\mu *}$ for $\mu=0$. As a consequence, the sphere is the only closed rotational surface satisfying $\kappa_1=a\kappa_2$. 
\end{proposition}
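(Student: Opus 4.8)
The plan is to prove the sharper statement first (no closed critical curves of $\mathcal{F}_p^{\mu *}$ with $\mu=0$ and non-constant curvature), and then deduce the classification of closed rotational surfaces as an immediate corollary via the variational characterization of Theorem \ref{1direction} and Proposition \ref{converse}. The setting is: $\mu=0$ corresponds to $b=0$ in the Weingarten relation, and the functional reduces to $\mathcal{F}^{0*}_p(\gamma)=\int (\theta')^p\,ds$. The key structural fact to exploit is the explicit parametrization \eqref{critical1}, which for $\mu=0$ becomes $\gamma(s)=d\bigl(p(\theta')^{p-1},0,\int (p-1)(\theta')^{p}\,ds\bigr)$. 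In particular, the first coordinate is $x(s)=dp\,(\theta'(s))^{p-1}$.

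First I would argue that a closed curve must have a closed curvature function, so $\theta'(s)$ is periodic and, being continuous on a compact loop, attains a maximum and a minimum. The crucial observation is that along a closed convex-type critical curve the sign of $\theta'$ is constrained: since $x(s)=dp\,(\theta')^{p-1}>0$ for a regular curve not meeting the axis (recall $x>0$ away from the axis, and $d>0$), the curvature $\theta'$ cannot change sign, i.e. $\theta'$ is of one sign throughout. The plan is to show this is incompatible with closedness. For a closed curve the tangent angle $\theta$ must return to itself modulo $2\pi$, so $\oint \theta'\,ds\in 2\pi\mathbb{Z}$, which for a simple closed curve of non-vanishing curvature gives total turning $\pm 2\pi$; combined with $x=dp\,(\theta')^{p-1}$ this forces $x$ to be determined monotonically by $\theta'$, preventing $x$ from returning to its starting value unless $\theta'$ is constant.

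More concretely, I would use the first integral \eqref{int} with $b=0$, namely $\dot F\,(a-1)\theta'=a(F+\lambda)$, together with $x=\dot F(\theta')$, to reduce the problem to the qualitative behavior of $x$ as a function of $\theta'$. Because $x=dp\,(\theta')^{p-1}$ is a strictly monotone function of $\theta'$ on each sign-interval (as $p\neq 0,1$), the map $s\mapsto\theta'(s)$ and $s\mapsto x(s)$ are locked together; as the curve closes up, $x$ would have to take each value twice (once on the ascending and once on the descending part of the loop), yet each value of $x$ corresponds to a unique value of $\theta'$, and hence to a unique value of the slope $z'=\sin\theta$ only up to the reflection $\theta\mapsto\pi-\theta$. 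Tracking the phase-plane trajectory in the $(x,\theta')$ or $(\sin\theta,x)$ variables, I would show the orbit cannot be a closed loop: the trivial case $\theta'\equiv$ const is excluded by hypothesis, and any non-constant orbit escapes to the boundary (either $x\to 0$, meeting the axis, or $\theta'\to 0$, i.e. an inflection that opens the curve), so it never closes.

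The main obstacle I anticipate is making the phase-plane/monotonicity argument rigorous enough to rule out \emph{all} closed non-constant trajectories rather than merely the simple (embedded) ones, since a priori a closed critical curve could be non-embedded and wind several times. I would handle this by reducing everything to the single scalar relation $x=dp\,(\theta')^{p-1}$: this equation says $\theta'$ is a \emph{function} of $x$ alone (the curvature is pointwise determined by the distance to the axis), so the second-order system \eqref{eq1} with $b=0$ is really a first-order autonomous system in the $(x,z)$-plane for the slope, and its trajectories are level sets of a conserved quantity obtained from \eqref{int}. Showing those level sets are never closed loops — equivalently that the only bounded orbit is the constant one corresponding to the sphere/cylinder — is the technical heart; once it is in place, the corollary follows because a closed rotational surface not meeting the axis would need a closed generating curve, which we have excluded, while a surface meeting the axis with $b=0$ and $a\neq 1$ is handled by the constant-curvature discussion preceding the proposition, leaving only the sphere.
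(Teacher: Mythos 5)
You start in the right place---the explicit parametrization \eqref{critical1} with $\mu=0$, so that $x(s)=dp\,(\theta')^{p-1}$ and $z(s)=d(p-1)\int(\theta')^{p}\,ds$, together with the observation that $\theta'$ keeps a fixed sign---but you then miss the one-line conclusion that this setup hands you, and the argument you substitute for it has a gap. The paper's proof simply reads off the closure condition from the \emph{second} coordinate: if $\gamma$ is closed then $\theta'$ is periodic with some period $T$ and the net vertical displacement over one period must vanish, which for $\mu=0$ is $(p-1)\int_0^T(\theta')^{p}\,ds=0$; since the orientation can be fixed so that $\theta'>0$, the integrand is positive and this is impossible. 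Your proposal instead claims that the relation $x=dp\,(\theta')^{p-1}$ ``prevents $x$ from returning to its starting value unless $\theta'$ is constant''---but that is false: $x$ returns to its starting value whenever $\theta'$ does, precisely because $x$ is a function of $\theta'$ alone. The obstruction to closing up is the monotone drift of $z$, not the behaviour of $x$, and you never write down the period integral that exhibits it.

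Your fallback---a phase-plane argument that every non-constant orbit of the $b=0$ system escapes to the boundary, so $\theta'$ is never periodic---would indeed suffice if proved, but you explicitly defer it as ``the technical heart,'' and carrying it out amounts to redoing the qualitative analysis of Theorem \ref{t-w1} rather than exploiting the variational parametrization at all. (Be careful also not to conflate a closed orbit in the $(\theta,x)$ phase plane with a closed planar curve $\gamma$: unduloid-type generating curves have closed phase orbits yet are non-closed, periodic curves; the distinction is exactly the vanishing or not of the $z$-period integral above.) To repair the proof, replace the monotonicity discussion by the displacement condition $\int_0^T(\theta'-\mu)^{p-1}\bigl((p-1)\theta'+\mu\bigr)\,ds=0$, specialize to $\mu=0$, and conclude by positivity. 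Your deduction of the second assertion (sphere as the only closed rotational surface with $\kappa_1=a\kappa_2$) from the first, via Theorem \ref{1direction}, the constant-curvature cases, and Proposition \ref{pr-or} for curves meeting the axis, matches the paper and is fine.
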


\begin{proof}

Critical curves with non-constant curvature of $\mathcal{F}_p^{\mu *}$ can be parametrized by \eqref{critical1}. Therefore, $\gamma$ will be closed  if and only if  $\theta'(s)$ is a periodic function and
\begin{equation}
\int_0^T \left(\theta'(s)-\mu\right)^{p-1}\left((p-1)\theta'(s)-\mu\right)\,ds=0\,,\label{cond}
\end{equation}
where $T$ denotes the period of $\theta'(s)$. If  $\mu=0$,  \eqref{cond} simplifies to
\begin{equation}
\int_0^T\theta'(s)^p\, ds=0\,.\label{cond2}
\end{equation}
Since  the orientation can be locally fixed, say  $\theta'(s)>0$, we obtain a contradiction. 

For the last part of the statement, if   $a=1$, we know that $M$ is a sphere. Thus, we suppose $a\not=1$ and that $\gamma$ has not constant curvature: if it is constant, then $M$ is a plane, a circular cylinder or a sphere. If $b=0$ in (\ref{rw}), then $\mu=0$ and the result is proved.
\end{proof}

\section{Results on symmetry}\label{sec3}

In this section we will obtain some symmetry results on the shape of a rotational linear Weingarten surface. They will be a direct consequence of the uniqueness of the theory of ordinary differential equations.  Following with the notation of the above section, a surface of revolution $M$ satisfying the linear relation (\ref{rw}) is characterized by the system of ordinary differential equations \eqref{eq1}. We now express the initial conditions for (\ref{eq1}), that is, the initial point $\gamma(0)=(x(0),0,z(0))$ and the initial velocity $\gamma'(0)=(x'(0),0,z'(0))$ at $s=0$. The last condition is equivalent to give the initial value $\theta(0)$ for the angle function $\theta(s)$. Since any Euclidean translation in the $z$-direction leaves invariant (\ref{eq1}) and the rotational axis, we may suppose $z(0)=0$. Therefore, the  initial conditions are
\begin{equation}\label{eq2}
x(0)=x_0,\ z(0)=0,\  \theta(0)=\theta_0, \end{equation}
with $x_0>0$ and $\theta_0\in [0,2\pi)$. Consequently the classification of   the linear Weingarten rotational surfaces can be expressed as follows:
\begin{quote} Classify and give a geometrical description of the generating curves $\gamma(s)=(x(s),0,z(s))$ that are solutions of (\ref{eq1}) for any values $x_0>0$ and $\theta_0\in [0,2\pi)$.
\end{quote}

A first result is how equation (\ref{rw}) changes when we reverse the orientation on the surface and how it is deformed by homotheties. The following result is immediate.

\begin{proposition}\label{pr-b}
Let $\{x(s),z(s),\theta(s)\}$ be a solution of (\ref{eq1})-(\ref{eq2}).
\begin{enumerate}
\item The functions 
$$\bar{x}(s)=x(-s),\ \bar{z}(s)=z(-s),\ \bar{\theta}(s)=\theta(-s)+\pi$$
are a solution of (\ref{eq1})-(\ref{eq2}) replacing the constant $b$ by $-b$ and $\theta_0$ by $\theta_0+\pi$. 
\item If $\lambda>0$, then
$$\bar{x}(s)=\lambda x(s/\lambda),\ \bar{z}(s)=\lambda z(s/\lambda),\ \bar{\theta}(s)=\theta(s/\lambda)$$
is a solution of (\ref{eq1})  changing the constant  $b$ by $b/\lambda$ and with initial conditions 
$$\bar{x}(0)=\lambda x_0,\ \bar{z}(0)=0,\ \bar{\theta}(0)=\theta_0.$$
\end{enumerate}
\end{proposition}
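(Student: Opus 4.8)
The plan is a direct verification: for each of the two claimed transformations I substitute the proposed functions $\{\bar{x},\bar{z},\bar{\theta}\}$ into the three equations of \eqref{eq1}, check that each is satisfied after the indicated replacement of $b$, and then evaluate at $s=0$ to confirm the initial conditions \eqref{eq2}. Since a solution of \eqref{eq1}--\eqref{eq2} is completely determined by the ODE system together with the initial data, establishing these two facts is all that is required; no integration is involved, only the chain rule.

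For the first assertion the only tools needed are the chain rule and the identities $\cos(\phi+\pi)=-\cos\phi$ and $\sin(\phi+\pi)=-\sin\phi$. Differentiating $\bar{x}(s)=x(-s)$ produces a factor $-1$ from the argument $-s$, giving $\bar{x}'(s)=-x'(-s)=-\cos\theta(-s)$; on the other hand $\cos\bar{\theta}(s)=\cos(\theta(-s)+\pi)=-\cos\theta(-s)$, so the first equation holds. The same mechanism, with sine in place of cosine, handles the second equation. For the third, $\bar{\theta}'(s)=-\theta'(-s)=-a\,\sin\theta(-s)/x(-s)-b$; using $\sin\bar{\theta}(s)/\bar{x}(s)=-\sin\theta(-s)/x(-s)$, the first term rewrites as $a\,\sin\bar{\theta}(s)/\bar{x}(s)$, while the constant becomes $-b$, so \eqref{eq1} holds with $b$ replaced by $-b$. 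Evaluating at $s=0$ yields $\bar{x}(0)=x_0$, $\bar{z}(0)=0$ and $\bar{\theta}(0)=\theta_0+\pi$, as claimed.

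For the second assertion the homothety is even more transparent: differentiating $\bar{x}(s)=\lambda x(s/\lambda)$, the prefactor $\lambda$ and the chain-rule factor $1/\lambda$ cancel, so $\bar{x}'(s)=x'(s/\lambda)=\cos\theta(s/\lambda)=\cos\bar{\theta}(s)$, and likewise $\bar{z}'(s)=\sin\bar{\theta}(s)$. For the curvature equation, $\bar{\theta}'(s)=\theta'(s/\lambda)/\lambda$, and since $\sin\bar{\theta}(s)/\bar{x}(s)=\sin\theta(s/\lambda)/\bigl(\lambda\,x(s/\lambda)\bigr)$ the overall factor $1/\lambda$ is distributed onto both terms, turning $b$ into $b/\lambda$ while leaving the coefficient $a$ of the ratio $\sin\theta/x$ unchanged. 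The initial conditions $\bar{x}(0)=\lambda x_0$, $\bar{z}(0)=0$, $\bar{\theta}(0)=\theta_0$ follow at once from evaluation at $s=0$.

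I expect no genuine obstacle: the statement is purely a symmetry of the ODE system \eqref{eq1}, and its entire content is the bookkeeping of signs in the first part and of the scaling factors in the second. The single point worth emphasizing is that in the first transformation the reflection $s\mapsto-s$ and the phase shift $\theta\mapsto\theta+\pi$ must be applied simultaneously, so that the two induced sign changes cancel in the first two equations but combine (rather than cancel) to flip the sign of the constant $b$ in the third.
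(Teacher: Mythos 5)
Your verification is correct and is exactly the routine chain-rule/substitution check the paper has in mind when it states the proposition is ``immediate'' and omits the proof. Both parts, including the sign bookkeeping for $\theta\mapsto\theta+\pi$ and the cancellation of the scaling factors, match the intended argument.
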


Now, we prove that solutions have horizontal symmetry when their tangent vector is vertical.

\begin{proposition}[Horizontal symmetry]
\label{pr-sy}
Let $\{x(s),z(s),\theta(s)\}$ be a solution of (\ref{eq1})-(\ref{eq2}). If the tangent vector of $\gamma$ is vertical at some point $s=s_0$, then the graphic of $\gamma$ is symmetric about the horizontal  line of equation $z=z(s_0)$.
\end{proposition}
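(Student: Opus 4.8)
The plan is to exploit the uniqueness theorem for the initial value problem associated to the system \eqref{eq1}, exactly as the section's opening remark promises. The geometric statement ``symmetric about the horizontal line $z=z(s_0)$'' should be reformulated as an identity between the solution $\{x(s),z(s),\theta(s)\}$ and a suitably reflected solution. A reflection across the horizontal line $z=z(s_0)$ sends a point $(x,z)$ to $(x,2z(s_0)-z)$; combined with reversing the parameter (so that the curve is traced backwards through the symmetry point), this suggests defining the candidate reflected solution
\begin{equation}
\tilde{x}(s)=x(2s_0-s),\quad \tilde{z}(s)=2z(s_0)-z(2s_0-s),\quad \tilde{\theta}(s)=-\theta(2s_0-s).\nonumber
\end{equation}

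First I would verify that this triple $\{\tilde{x},\tilde{z},\tilde{\theta}\}$ is again a solution of the system \eqref{eq1}. This is a direct computation using the chain rule: the factor $2s_0-s$ contributes a sign $-1$ under $d/ds$, and one checks that $\tilde{x}'(s)=\cos\tilde{\theta}(s)$, $\tilde{z}'(s)=\sin\tilde{\theta}(s)$, and $\tilde{\theta}'(s)=a\sin\tilde{\theta}(s)/\tilde{x}(s)+b$ all hold, using $\cos(-\alpha)=\cos\alpha$ and $\sin(-\alpha)=-\sin\alpha$ together with the fact that $\tilde{x}(s)=x(2s_0-s)>0$. The hypothesis that the tangent vector is vertical at $s_0$ means $x'(s_0)=\cos\theta(s_0)=0$, so $\theta(s_0)=\pm\pi/2$, whence $\theta(s_0)\equiv-\theta(s_0)\pmod{2\pi}$ in the sense relevant to the angle function; this is precisely what forces the two solutions to share the same initial data at $s=s_0$.

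Next I would compare initial conditions at $s=s_0$. By construction $\tilde{x}(s_0)=x(s_0)$ and $\tilde{z}(s_0)=2z(s_0)-z(s_0)=z(s_0)$, so the base points agree. For the angle, the verticality hypothesis gives $\theta(s_0)=\pi/2$ or $3\pi/2$, and in either case $\tilde{\theta}(s_0)=-\theta(s_0)$ coincides with $\theta(s_0)$ modulo $2\pi$, so the angular initial value agrees as well (the velocity vectors $\gamma'(s_0)$ and $\tilde{\gamma}'(s_0)$ are literally equal). Since both $\{x,z,\theta\}$ and $\{\tilde{x},\tilde{z},\tilde{\theta}\}$ solve the same first-order system \eqref{eq1} with the same data at $s_0$, the uniqueness of solutions of ordinary differential equations yields $\tilde{x}\equiv x$, $\tilde{z}\equiv z$, $\tilde{\theta}\equiv\theta$ on the common interval. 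Reading off the relation $z(s)=2z(s_0)-z(2s_0-s)$ is exactly the assertion that the graph of $\gamma$ is invariant under reflection across $z=z(s_0)$, completing the proof.

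The step I expect to demand the most care is the bookkeeping for the angle function at the symmetry point, together with a subtle orientation issue. The angle $\theta$ enters the curvature only through $\theta'$, and the reflected velocity must genuinely match the original velocity rather than merely its reflected image; the choice $\tilde{\theta}(s)=-\theta(2s_0-s)$ (as opposed to $\pi-\theta(2s_0-s)$, say) is dictated by requiring $\tilde{x}'=\cos\tilde{\theta}$ with the time-reversal sign, and one must confirm that this same choice is consistent with $\tilde{z}'=\sin\tilde{\theta}$ and that it reproduces the original initial angle only because $\theta(s_0)$ is vertical. If the tangent were not vertical, $-\theta(s_0)$ would differ from $\theta(s_0)$ and the uniqueness argument would fail, which is exactly why the verticality hypothesis is indispensable; isolating this point is the crux of the argument.
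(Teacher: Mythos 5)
Your overall strategy --- reflect across $z=z(s_0)$, reverse the parameter, and invoke uniqueness for the initial value problem --- is exactly the paper's, but your transformation law for the angle is wrong, and with it the central verification fails. With $\tilde{\theta}(s)=-\theta(2s_0-s)$ one computes $\tilde{x}'(s)=-x'(2s_0-s)=-\cos\theta(2s_0-s)$, whereas $\cos\tilde{\theta}(s)=\cos\theta(2s_0-s)$; hence $\tilde{x}'=-\cos\tilde{\theta}$, not $\cos\tilde{\theta}$. Likewise $\tilde{z}'(s)=z'(2s_0-s)=\sin\theta(2s_0-s)=-\sin\tilde{\theta}(s)$, and the third equation becomes $\tilde{\theta}'=-a\sin\tilde{\theta}/\tilde{x}+b$. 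So none of the three equations of \eqref{eq1} is satisfied by your triple, and the ``direct computation'' you assert does not go through. The initial data do not match either: if $\theta(s_0)=\pi/2$ then $\tilde{\theta}(s_0)=-\pi/2\equiv 3\pi/2\pmod{2\pi}$, which is the \emph{opposite} tangent direction; $-\pi/2$ and $\pi/2$ differ by $\pi$, not by $2\pi$, so the claim that the two velocity vectors at $s_0$ are literally equal is false under your definitions.

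The correct choice is the one you explicitly rejected: $\tilde{\theta}(s)=\pi-\theta(2s_0-s)$, which is what the paper uses (after normalizing $s_0=0$). Composing the reflection $(x,z)\mapsto(x,2z(s_0)-z)$ with the time reversal $s\mapsto 2s_0-s$ sends the unit tangent $(\cos\theta,\sin\theta)$ to $(-\cos\theta,\sin\theta)=(\cos(\pi-\theta),\sin(\pi-\theta))$; your law $\theta\mapsto-\theta$ is the one for the reflection \emph{without} time reversal, and that map sends solutions of \eqref{eq1} to solutions with $b$ replaced by $-b$, so it is not even a symmetry of the system when $b\neq 0$. With $\pi-\theta$ all three equations of \eqref{eq1} check, and $\pi-\theta(s_0)\equiv\theta(s_0)\pmod{2\pi}$ exactly when $\cos\theta(s_0)=0$, which is where the verticality hypothesis enters. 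Once this substitution is made, the rest of your argument (matching data at $s_0$, uniqueness, reading off $z(s)=2z(s_0)-z(2s_0-s)$) is correct and coincides with the paper's proof.
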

\begin{proof}
Suppose   $s_0=0$. Since $\gamma$ is vertical at $s=0$, then up to an integer multiple of $2\pi$, we find that $\theta(0)=\pi/2$ or $\theta(0)=3\pi/2$. Without loss of generality,   suppose   $\theta(0)=\pi/2$ (the other case is similar after reversing the orientation). If we define the functions 
$$\bar{x}(s)=  x(-s),\ \bar{z}(s)=  2z(0)-z(-s),\ \bar{\theta}(s)=\pi-\theta(-s),$$
then it is immediate that these functions satisfy the same equations (\ref{eq1})   with the same initial conditions at $s=0$ that $(x(s),z(s),\theta(s))$. The proof follows from the uniqueness  of ordinary differential equations.
\end{proof} 

In next proposition, we   determine  the isoparametric surfaces that satisfy (\ref{rw}).

\begin{proposition}\label{pr-circle}
Let $M$ be an isoparametric rotational surface satisfying \eqref{rw}. Then:
\begin{enumerate}
\item  $M$ is  a plane  only if $b=0$.
\item For each  $a\not=1$ and $b\neq 0$, there exists a unique round sphere satisfying (\ref{rw}). Moreover, if $b=0$, then there exists a round sphere of arbitrary radius, only if $a=1$.
\item For each $b\not=0$,  there is a circular cylinder solution of \eqref{rw}.
\end{enumerate}
\end{proposition}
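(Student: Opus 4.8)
The plan is to verify each of the three claims directly from the defining system \eqref{eq1} together with the principal curvature formulas, since isoparametric means both $\kappa_1$ and $\kappa_2$ are constant, which gives us strong algebraic control. Recall that for a rotational surface $\kappa_1=\theta'(s)$ and $\kappa_2=\sin\theta(s)/x(s)$, so constancy of both principal curvatures forces $\theta'(s)\equiv\kappa_1$ (a constant) and $\sin\theta(s)/x(s)\equiv\kappa_2$ (a constant). I would first dispatch the degenerate subcase where $\gamma$ is a straight line, which corresponds to $\kappa_1=\theta'=0$; then the relation \eqref{rw} reads $0=a\kappa_2+b$, and I would analyze when this produces a plane.

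For the plane (item 1), a plane arises when the profile curve is a horizontal line, so $\theta\equiv 0$ (or $\pi$), giving $\kappa_1=0$ and $\kappa_2=\sin\theta/x=0$. Substituting into \eqref{rw} yields $0=a\cdot 0+b$, hence $b=0$; conversely if $b=0$ one checks such a horizontal line is indeed a solution. This is the easy direction and I would state it briefly. For the circular cylinder (item 3), I take the profile to be a vertical line, $\theta\equiv\pi/2$, so $\kappa_1=\theta'=0$ while $\kappa_2=\sin(\pi/2)/x_0=1/x_0$ is a nonzero constant determined by the cylinder radius $x_0$. Then \eqref{rw} becomes $0=a/x_0+b$, which for any given $b\neq 0$ can be solved by choosing the radius $x_0=-a/b$ (requiring $x_0>0$, so this fixes the correct sign/interpretation; since $a\neq 0$ this is a genuine positive radius for an appropriate orientation). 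I would present this as exhibiting the explicit cylinder rather than proving uniqueness, matching the weaker phrasing of the claim.

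The substantive case is the round sphere (item 2), which is where I expect the real work. A sphere of radius $R$ centered on the axis is umbilical, so $\kappa_1=\kappa_2=1/R$ (up to sign). Feeding this into \eqref{rw} gives $1/R=a(1/R)+b$, i.e. $(1-a)/R=b$, hence $R=(1-a)/b$. For fixed $a\neq 1$ and $b\neq 0$ this determines $R$ uniquely, and positivity of $R$ selects the orientation; this gives existence and uniqueness of the sphere and explains why the hypothesis $a\neq 1$ is needed (if $a=1$ the equation degenerates). I must also handle the converse orientation carefully, since reversing the normal flips the sign of both principal curvatures simultaneously, so the umbilic relation $\kappa_1=\kappa_2$ is preserved and only the sign of $R$ versus $b$ is affected; Proposition \ref{pr-b} governs exactly this sign change. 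Finally, for the last sentence of item 2, when $b=0$ the sphere equation becomes $1/R=a/R$, forcing $a=1$, and then $R$ is unconstrained, yielding a sphere of arbitrary radius; conversely $a\neq 1$ with $b=0$ admits no sphere.

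The main obstacle will be bookkeeping the orientation conventions consistently: the signs of $\kappa_1,\kappa_2$ depend on the choice of unit normal and on the direction of travel along $\gamma$, and one must ensure that the sign of $b$ in $R=(1-a)/b$ is compatible with the requirement $x_0>0,\ \theta_0\in[0,2\pi)$ from \eqref{eq2}. I would resolve this by appealing to Proposition \ref{pr-b}, which tells us precisely how a solution transforms under reversing orientation ($b\mapsto -b$), so that after possibly applying that symmetry we may always arrange $R>0$. Aside from this sign analysis, every step reduces to substituting constant values into \eqref{eq1} and \eqref{rw}, so the proof is essentially a short verification once the umbilic and straight-line cases are separated.
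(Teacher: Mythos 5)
Your proposal is correct and follows essentially the same route as the paper: in each case one substitutes the constant principal curvatures of the plane, sphere ($\kappa_1=\kappa_2=\pm 1/r$) and cylinder ($\kappa_1=0$, $\kappa_2=\pm 1/r$) into the relation \eqref{rw} and solves for the radius, with the sign ambiguity absorbed by the choice of orientation. The paper handles the orientation by writing $\pm 1/r$ directly rather than invoking Proposition \ref{pr-b}, but this is only a cosmetic difference.
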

\begin{proof}
The first case is clear since the principal curvatures of a plane are both zero. For a round sphere of radius $r>0$, the principal curvatures are $\kappa_1=\kappa_2=\pm1/r$ where $+$ (resp. $-$)  corresponds with the inward orientation (resp. the outward orientation). Then we ask if there is a solution $r>0$ such that $(1-a)/r=b$. If $b=0$, then $a=1$ and $r$ is arbitrary. If $b\not=0$, then necessarily $a\not=1$, and we take $r=(1-a)/b$ or $r=(a-1)/b$ in order to have $r>0$. Finally, for a circular cylinder of radius $r>0$, the principal curvatures are  $\kappa_1=0$ and $\kappa_2=\pm 1/r$ depending on the orientation. Then if $b\not=0$, we solve $a/r=b$, obtaining $r=a/b$ or $r=-a/b$ depending on the sign of $a/b$. \end{proof}

Notice that spheres cut the axis of rotation orthogonally. Moreover, all solutions that cut the axis behave in the same way.

\begin{proposition}\label{pr-or}
 If $\gamma(s)=(x(s),0,z(s))$ is a solution of (\ref{eq1}) and the graphic of $\gamma$ intersects the $z$-axis, that is, there exists $s_0>0$ such that $\lim_{s\rightarrow s_0}x(s)=0$ and $\lim_{s\rightarrow s_0}z(s)=z_1$, then 
$\gamma$ meets perpendicularly the $z$-axis. 
\end{proposition}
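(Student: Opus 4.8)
The plan is to exploit a first integral of the planar subsystem governing $(x,\theta)$, since the equation for $z$ in \eqref{eq1} decouples and plays no role. First I would record that, because $x_0>0$ and $x$ is continuous with $x(s)\to 0$ only as $s\to s_0$, one has $x(s)>0$ for every $s\in[0,s_0)$; moreover $|z'|=|\sin\theta|\le 1$ guarantees the stated limit $z(s)\to z_1$. The goal is then purely to show that $\sin\theta(s)\to 0$ as $s\to s_0$: indeed the unit tangent of $\gamma$ is $(\cos\theta,0,\sin\theta)$, so $\sin\theta(s_0)=0$ is exactly the statement that $\gamma'$ becomes horizontal, i.e. orthogonal to the $z$-axis.

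The heart of the argument is the observation that the quantity
$$\Phi(s)=x(s)^{-a}\sin\theta(s)-\frac{b}{1-a}\,x(s)^{1-a}$$
(for $a\neq 1$; replace the second term by $-b\ln x(s)$ when $a=1$) is constant along any solution of \eqref{eq1}. I would verify this by a one-line computation: differentiating and substituting $x'=\cos\theta$ and $\theta'=a\sin\theta/x+b$, the two terms coming from the product rule in $\tfrac{d}{ds}\!\left(x^{-a}\sin\theta\right)$ cancel the $a$-part exactly, leaving $b\,x^{-a}x'$, which is precisely $\tfrac{d}{ds}\!\left(\tfrac{b}{1-a}x^{1-a}\right)$. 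Hence $\Phi'\equiv 0$, so $\Phi(s)=C$ for the constant $C=\Phi(0)$ determined by the initial data \eqref{eq2}.

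Solving the conservation law for $\sin\theta$ gives, for all $s\in[0,s_0)$,
$$\sin\theta(s)=C\,x(s)^{a}+\frac{b}{1-a}\,x(s)$$
(and $\sin\theta=C\,x+b\,x\ln x$ when $a=1$). Letting $s\to s_0$, so that $x(s)\to 0^+$, I would finish by cases: if $a>0$ both terms tend to $0$; if $a=1$ then $x\ln x\to 0$; and if $a<0$ the left-hand side is bounded by $1$ while $x^{a}\to+\infty$, which forces $C=0$ and again leaves $\sin\theta(s)=\tfrac{b}{1-a}x(s)\to 0$. In every case $\sin\theta(s_0)=0$, hence $\cos\theta(s_0)=\pm 1$ and $\gamma$ meets the axis orthogonally.

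The main obstacle is precisely the sign of $a$: for $a<0$ the weight $x^{-a}$ vanishes at the axis and $x^{a}$ blows up, so one cannot read off the limit naively, and the conclusion hinges on the boundedness of $\sin\theta$ forcing the integration constant $C$ to vanish. A secondary point to be careful about is that the first integral should be presented directly as $\Phi'\equiv 0$ along solutions (rather than via an integrating factor with $\theta$ regarded as a function of $x$); this avoids any monotonicity assumption on $x$ and keeps the conservation valid on all of $[0,s_0)$, including across points where $\cos\theta=0$.
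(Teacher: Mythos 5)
Your proof is correct, and it takes a genuinely different route from the paper's. The paper argues by contradiction directly from the third equation of \eqref{eq1}: if $\sin\theta$ did not tend to $0$ as $s\to s_0$, then $\theta'=a\sin\theta/x+b$ would blow up because $x\to 0$, forcing $\theta\to\infty$ and hence infinitely many turns of the profile curve near the point $(0,z_1)$, which is incompatible with $x(s)>0$. You instead exhibit the conservation law $\Phi(s)=x^{-a}\sin\theta-\tfrac{b}{1-a}\,x^{1-a}$ (with the logarithmic modification for $a=1$), solve it for $\sin\theta$, and read off the limit as $x\to 0^{+}$, using the bound $|\sin\theta|\le 1$ to force $C=0$ when $a<0$. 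I checked the key computation $\tfrac{d}{ds}\left(x^{-a}\sin\theta\right)=b\,x^{-a}x'$; it is right, and when $b=0$ your first integral reduces exactly to the relation $x'^2=1+mx^{2a}$ that the paper derives separately as \eqref{in2}, so your identity is in fact a clean generalization of that formula to $b\neq 0$. What your approach buys is a quantitative conclusion --- the formula $\sin\theta=Cx^{a}+\tfrac{b}{1-a}x$ gives the precise rate at which the tangent becomes horizontal --- and it sidesteps the somewhat informal ``infinitely many turns'' step of the paper (where, strictly speaking, one should also handle the case that $\lim\sin\theta$ fails to exist, which your argument does automatically). One small point worth a sentence in a final write-up: from $\sin\theta(s)\to 0$ and continuity of $\theta$, the angle is eventually confined to a single interval around some multiple of $\pi$, so $\cos\theta(s)\to\pm 1$ and the tangent genuinely converges to a horizontal direction.
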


\begin{proof}
Firstly, we prove that $\lim_{s\rightarrow s_0}z'(s)=0$. By contradiction, suppose that $\lim_{s\rightarrow s_0}\sin\theta(s)\not=0$. Then from (\ref{eq1}) we obtain that $\lim_{s\rightarrow s_0}\theta(s)=\infty$. This implies that the curve $\gamma$ gives infinitely many turns around the point $(0,z_1)$, which is clearly a contradiction since the function $x(s)$ is positive. 
\end{proof}

Finally, in next theorem, we prove that some solutions are invariant under translations. 

\begin{theorem}[Invariance by translations] \label{th-tr}
Let  $\gamma(s)=(x(s),0,z(s))$ be a solution of (\ref{eq1}). If  the range of $\theta$  contains an interval of length $2\pi$, then  the graphic of $\gamma$ is periodic along the rotational axis and $\gamma$ is invariant by a discrete group of translations in the $z$-direction. 
\end{theorem}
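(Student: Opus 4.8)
The plan is to exploit the fact that the $z$–coordinate is cyclic: the first and third equations of \eqref{eq1} form a closed autonomous system in $(x,\theta)$,
\[
x'=\cos\theta,\qquad \theta'=a\frac{\sin\theta}{x}+b,
\]
whose right--hand side is $2\pi$--periodic in $\theta$, while $z$ is recovered afterwards from $z'=\sin\theta$. First I would record the elementary consequence of the hypothesis: since $\theta$ is continuous and its range contains an interval of length $2\pi$, it attains the two ``vertical'' values $\theta\equiv\pi/2$ and $\theta\equiv 3\pi/2 \pmod{2\pi}$ at two parameters $s_1<s_2$, which are distinct because the values differ. At each of these points the tangent of $\gamma$ is vertical, so Proposition~\ref{pr-sy} applies and $\gamma$ is symmetric about each of the two horizontal lines $z=z(s_1)$ and $z=z(s_2)$.

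The main idea is then that the composition of the two reflections about these parallel lines is the translation in the $z$--direction by $(0,0,d)$ with $d=2\bigl(z(s_2)-z(s_1)\bigr)$. Since $\gamma$ is invariant under each reflection, it is invariant under their composition, and hence under the infinite cyclic---thus discrete---group generated by this translation; this is precisely the asserted periodicity. The same conclusion can be phrased on the phase plane: the map $R(x,\theta)=(x,\pi-\theta)$ is a time--reversing symmetry of the autonomous system above (it is exactly the symmetry underlying Proposition~\ref{pr-sy}), its fixed--point set is $\{\theta\equiv\pi/2\}\cup\{\theta\equiv 3\pi/2\}$, and an orbit of a reversible system that meets the fixed--point set at two parameters $s_1,s_2$ is periodic with period $T=2(s_2-s_1)$. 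By uniqueness for \eqref{eq1} this gives $x(s+T)=x(s)$ and $\theta(s+T)=\theta(s)+2\pi$ (choosing $s_1,s_2$ as consecutive vertical points), whence $z(s+T)-z(s)=\int_0^T\sin\theta\,ds=d$ is independent of $s$ and $\gamma(s+T)=\gamma(s)+(0,0,d)$.

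The step I expect to be the real obstacle is showing that the translation is \emph{nontrivial}, i.e. $d\neq 0$ (equivalently $z(s_1)\neq z(s_2)$); otherwise $\gamma$ would close up into a loop and generate a torus rather than a surface periodic along the axis. Here I would argue in two stages. First, evaluating the third equation of \eqref{eq1} at $\theta\equiv 0$ and $\theta\equiv\pi$ gives $\theta'=b\neq0$, so $\theta$ can never reverse while crossing these levels; consequently $\theta$ cannot oscillate back and forth over a range of length $2\pi$, and its net change over one period must be the nonzero multiple $2\pi$ found above, ruling out a closed orbit in the $(x,\theta)$--plane. Second, there remains the possibility that $(x,\theta)$ is periodic while $\int_0^T\sin\theta\,ds=0$, in which case $\gamma$ would be a closed profile generating a torus of revolution; excluding this is the genuine content of the statement, and it is where the finer phase--plane analysis (or the non--existence of linear Weingarten tori) must be invoked. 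Once $d\neq 0$ is secured, the rotational surface $M$ inherits the same translational symmetry, since translations along the $z$--axis commute with the rotations generating $M$, which completes the proof.
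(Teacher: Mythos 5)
Your proof is correct and follows essentially the same route as the paper: both arguments apply Proposition~\ref{pr-sy} at two consecutive vertical tangencies (where $\theta\equiv\pi/2$ and $\theta\equiv 3\pi/2 \pmod{2\pi}$), obtain the $z$-translation as the composition of the two resulting horizontal reflections (equivalently, via $x(T)=x(0)$, $\theta(T)=\theta(0)+2\pi$ and uniqueness for \eqref{eq1}), and conclude $\gamma(s+T)=\gamma(s)+(0,0,z(T))$. The nontriviality $z(T)\neq 0$ that you flag as the ``real obstacle'' is not addressed in the paper's proof either, which stops at the displayed translation identity, so your write-up is, if anything, more candid about that remaining point.
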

\begin{proof}
Let $T>0$ be the first number such that  $\theta(T)=\theta_0+2\pi$. 

{\it Claim.} $x(T)=x(0)$.  Without loss of generality, we may assume that $\theta(0)=\theta_0=0$. Let $s_1>0$ be the first time that $\theta$ attains the value $\pi/2$. In particular, the graphic of $\gamma$ is vertical at $\gamma(s)$. It follows from  Proposition   \ref{pr-sy}  that  $x(s+s_1)=x(-s+s_1)$ and $\theta(s+s_1)=\pi-\theta(-s+s_1)$. Thus $x(2s_1)=x(0)$ and $\theta(2s_1)=\pi$. Since the range of $\theta$ contains the interval $[0,2\pi]$,  let $s_2>0$ be the first time that $\theta$ attains the value $3\pi/2$. By Proposition  \ref{pr-sy} and a similar argument as above, we find that $s_2=3s_1$ and that $4s_1$ is the first time that $\theta$ attains the value $2\pi$ with $x(4s_1)=x(0)$. In other words, $T=4s_1$ and $x(T)=0$, proving the claim. 

Once proved the claim, it is immediate that the functions 
$$\bar{x}(s)=  x(s+T),\ \bar{z}(s)=  z(s+T)-z(T),\ \bar{\theta}(s)=\theta(s+T)-2\pi$$
  satisfy   (\ref{eq1}) and with the same initial conditions at $s=0$ that $(x(s),z(s),\theta(s))$. By uniqueness, we deduce 
$$\gamma(s+T)=\gamma(s)+(0,0,z(T)),$$
proving the result. 
\end{proof} 

\section{The linear case $\kappa_1=a\kappa_2$}\label{sec4}

In this section we classify the rotational surfaces satisfying the linear relation $\kappa_1=a\kappa_2$ with $a\not=0$. Recall that $\kappa_1$ corresponds with the curvature of the profile curve and thus a circular cylinder does not satisfies the equation (\ref{rw}). The study of this class of surfaces has been done in \cite{bg} where the authors have characterized the parallels of these surfaces from a variational viewpoint and different of our Theorem \ref{1direction}.

\begin{theorem}\label{t-w1}
The rotational linear Weingarten surfaces satisfying the relation $\kappa_1=a\kappa_2$, $a\not=0$, are planes, ovaloids (including spheres) and catenoid-types. More precisely, if $\gamma$ is the generating curve, we have (see Figure \ref{figureb01}):
\begin{enumerate}
\item Case $a>0$. The curve $\gamma$ is a concave graph on the $z$-axis of a function defined on a bounded interval. The rotational surface is an ovaloid. If $a=1$, then the surface is a round sphere. 
\item Case $a<0$. The curve $\gamma$ is a convex graph on the $z$-axis. If $-1\leq a<0$, then $\gamma$ is a graph of a function defined on the entire $z$-axis  and if $a<-1$,   the function is defined on a bounded interval of the $z$-axis being asymptotic to two parallel lines. In both cases, the surface is of catenoid-type.
\end{enumerate}
\end{theorem}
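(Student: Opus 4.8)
The plan is to analyze the ODE system \eqref{eq1} with $b=0$, working in the phase plane of the variable $\theta$ and studying the monotonicity and convexity of the generating curve $\gamma$ directly from the third equation $\theta'(s)=a\sin\theta(s)/x(s)$. Since $b=0$, the sign of $\theta'$ is governed entirely by $a\sin\theta$, and because $x(s)>0$ this gives immediate qualitative control. First I would set up the integral of motion: multiplying the first equation by $\theta'$ and using \eqref{nec}, or more directly observing that $\kappa_1=a\kappa_2$ reads $\theta'=a\sin\theta/x$, I would seek a first integral relating $x$ and $\theta$. Separating variables in $\theta'=a\sin\theta/x$ together with $x'=\cos\theta$ yields $dx/d\theta = x\cos\theta/(a\sin\theta)$, which integrates to
\begin{equation}
x(s)=c\,|\sin\theta(s)|^{1/a}\nonumber
\end{equation}
for some constant $c>0$. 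This explicit relation is the engine of the whole proof, and I expect establishing and correctly interpreting it to be the first key step.

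From the first integral I would read off the geometry case by case. Since $\kappa_2=\sin\theta/x = (1/c)|\sin\theta|^{1-1/a}\,\mathrm{sgn}(\sin\theta)$, I can track where $\gamma$ is a graph over the $z$-axis and its concavity. I would normalize using the horizontal-symmetry result (Proposition \ref{pr-sy}): at a point where the tangent is vertical, $\theta=\pi/2$, the curve is symmetric about a horizontal line, so it suffices to analyze one monotone branch, say $\theta$ running in $(0,\pi/2]$, and then reflect. For \textbf{Case} $a>0$: as $\theta\to 0^+$ the first integral gives $x\to 0$, so $\gamma$ meets the axis (orthogonally, by Proposition \ref{pr-or}), while at $\theta=\pi/2$ the tangent is vertical; tracking $z(s)=\int\sin\theta\,ds$ over the bounded $\theta$-range and checking that $\kappa_1=\theta'>0$ throughout shows $\gamma$ is a concave graph on a bounded $z$-interval, giving an ovaloid; the subcase $a=1$ forces $x=c\sin\theta$, a circle, hence a sphere. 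For \textbf{Case} $a<0$: now $\theta'=a\sin\theta/x<0$ on $(0,\pi)$, so $\kappa_1<0$ and the graph is convex; the exponent $1/a<0$ makes $x\to\infty$ as $\theta\to 0^+$, so the curve escapes to infinity rather than meeting the axis, producing a catenoid-type surface.

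The remaining and most delicate step is distinguishing the two subcases for $a<0$ according to whether the $z$-extent is infinite or bounded. This reduces to deciding convergence of the improper integral
\begin{equation}
\int \sin\theta\, ds \;=\; \int_{0}^{\pi/2}\frac{\sin\theta}{\theta'}\,d\theta \;=\; \frac{c}{|a|}\int_{0}^{\pi/2}\sin\theta\,|\sin\theta|^{1/a-1}\,d\theta\nonumber
\end{equation}
as $\theta\to 0^+$, where the integrand behaves like $\theta^{1/a}$. The integral converges precisely when $1/a>-1$, i.e. $-1\le a<0$ (with the boundary $a=-1$ needing a direct logarithmic check), and diverges for $a<-1$. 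Thus for $-1\le a<0$ the domain is the entire $z$-axis, while for $a<-1$ the curve lives on a bounded $z$-interval and is asymptotic to two horizontal lines $z=\pm z_\infty$. I expect this asymptotic analysis — verifying the behavior of $x$, $z$ near $\theta=0$ and identifying the threshold $a=-1$ — to be the main obstacle, since it requires care with the improper integral and the limiting exponents; the rest follows from the explicit first integral, the symmetry proposition, and the orthogonality proposition already proved.
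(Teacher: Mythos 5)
Your strategy is sound and, at its core, coincides with the paper's: your first integral $x=c\,|\sin\theta|^{1/a}$ is exactly the paper's first integral $x'^2=1+m\,x^{2a}$ in disguise (write it as $\cos^2\theta=1+m\,x^{2a}$, i.e. $-\sin^2\theta=m\,x^{2a}$, and solve for $x$ with $c=(-m)^{-1/(2a)}$), and both arguments then combine monotonicity of $\theta$, the horizontal symmetry of Proposition \ref{pr-sy}, and an improper-integral criterion for the $z$-extent. Where you genuinely depart from the paper is the endgame for $a<0$: the paper writes $z(x)=\sqrt{-m}\int_0^x(m+t^{-2a})^{-1/2}\,dt$ and cites Abramowitz--Stegun for the hypergeometric asymptotics, whereas your change of variables to $\theta$ reduces the dichotomy to the elementary integral $\int_0(\sin\theta)^{1/a}\,d\theta\sim\int_0\theta^{1/a}\,d\theta$, which is self-contained and preferable.

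Two points need repair, however. First, your convergence bookkeeping is internally inconsistent: for $a<0$ the condition $1/a>-1$ is equivalent to $a<-1$, not to $-1\le a<0$, and convergence of the improper integral means $z$ tends to a finite limit, i.e. a \emph{bounded} $z$-interval. The correct chain is: the integral converges iff $1/a>-1$ iff $a<-1$, giving the bounded interval with two asymptotes; it diverges for $-1\le a<0$ (logarithmically at $a=-1$), giving the entire axis. Your final sentence states the right classification, but it contradicts the ``i.e.\ $-1\le a<0$'' immediately before it, so as written the key step of the proof does not hold together. Second, the theorem also lists \emph{planes}, and your derivation of the first integral divides by $a\sin\theta$, hence tacitly assumes $\sin\theta$ never vanishes. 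You must first dispose of the case $\sin\theta(s_0)=0$ for some $s_0$ (equivalently $\theta'(s_0)=0$): by uniqueness for \eqref{eq1} the solution is then a horizontal line and the surface a plane, which is exactly how the paper opens its proof. Relatedly, Proposition \ref{pr-sy} only applies once you know the tangent becomes vertical somewhere; before normalizing to $\theta_0=\pi/2$ you should add the short argument (monotonicity of $\theta$ together with the first integral, as in the paper's ``Claim'') that $\theta$ actually attains $\pi/2$ for every non-planar solution.
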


\begin{proof}
 Let $\gamma(s)=(x(s),0,z(s))$ be a curve satisfying (\ref{eq1}) for $b=0$ and with initial conditions \eqref{eq2} where $x_0>0$. First, we observe that if there exists $s_0\in\r$ such that $\theta'(s_0)=0$, then $\gamma$ is a horizontal line and the surface is a plane. The proof is as follows. For the value $s=s_0$, we have $\sin\theta(s_0)=0$, that is,  $\theta(s_0)=0$ or $\theta(s_0)=\pi$ (up to a integer multiple of $2\pi$). Suppose $\theta(s_0)=0$  (a similar argument works   for $\theta(s_0)=\pi$). Then the functions 
$$\bar{x}(s)=s-s_0+x(s_0),\ \bar{z}(s)=z(s_0),\ \theta(s)=0$$
satisfy (\ref{eq1}) and with the  same  conditions at $s=s_0$ as $\{x(s),z(s),\theta(s)\}$. By uniqueness, both triple of functions agree, proving the result.
 
 Suppose $\theta'(s)\not=0$ everywhere. We discard the case $a=1$, where we know that $\gamma$ is a half-circle centered in the $z$-axis, proving the result.    We consider the initial condition $\theta_0$, which can not be $0$ because in such a case, $\theta$ is a constant function. From (\ref{eq1}), it follows that  $x''=-a(1-x'^2)/x$. After a change of variables $p=(x^2)'$, $z=x^2$ and $p=p(z)$, and some manipulations, we obtain a first integral of the above equation, obtaining
\begin{equation}\label{in2}
x'(s)^2=1+mx(s)^{2a},
\end{equation}
where $m\in\r$ is a constant of integration with $m\leq 0$. Let us observe that $m=0$ gives $x'^2=1$ everywhere, which is not possible. Thus $m<0$ and  $x(s)$ is bounded from above, namely, 
$x(s)\leq 1/(-m)^{1/(2a)}$.  Initially, let $\theta_0=\pi/2$. Then $\theta'(0)=a/x_0$ and $\theta$ is increasing (resp. decreasing) in a neighborhood of $s=0$ if $a>0$ (resp. $a<0$). In particular, the function $\theta$ is strictly monotonic.

\begin{enumerate}
\item Case $a>0$. The function $\theta$ is strictly increasing and by (\ref{eq1}), $\sin\theta(s)\not=0$ for very $s\in I$. This proves that  $\theta$ is a bounded function with $\theta(s)\leq\theta_1$, $\theta_1=\sup\theta(s)\leq\pi$.   From $x'(s)=\cos\theta(s)$, the function $x(s)$ is   decreasing   with $x'(s)\rightarrow\cos\theta_1<0$ proving that the function $x$ attains the value $x=0$ in a finite time $s_1$. As $\theta$ is a bounded monotonic function,  the third equation of (\ref{eq1}) implies  $\lim_{s\rightarrow s_1}\sin\theta(s)=0$ and thus $\theta_1=\pi$ and the intersection of $\gamma$ with the $z$-axis is orthogonal. In particular, $\lim_{s\rightarrow s_1}z'(s)=0$ and by L'H\^{o}pital rule, we have 
$$\lim_{s\rightarrow s_1}\theta'(s)=a\dfrac{\lim_{s\rightarrow s_1} \theta'(s)\cos\theta(s)}{\lim_{s\rightarrow s_1}\cos\theta(s)}=a\lim_{s\rightarrow s_1}\theta'(s).$$
As $a\not=1$, then $\lim_{s\rightarrow s_1}\theta'(s)=0$. This implies
$$\lim_{s\rightarrow s_1}z''(s)=\lim_{s\rightarrow s_1}\theta'(s)\cos\theta(s)=0.$$

By symmetry and Proposition \ref{pr-sy}, it follows that the graphic of $\gamma$ intersects the $z$-axis at two symmetric points, namely $\pm z_1$, with $z_1=\lim_{s\rightarrow s_1}z(s)$.  Since $z'(s)=\sin\theta(s)\not=0$  in $(-s_1,s_1)$, then $\gamma$ is a graph on the interval $(-z_1,z_1)$. Finally, if we write $z=z(x)$, then 
$$z''(x)=\frac{\theta'(s)}{\cos^3\theta(s)}<0,$$
proving that the graph $z=z(x)$ is concave. This means that the surface is an ovaloid.

Up to here, we have assumed $\theta_0=\pi/2$. Let us take now $\theta_0\in (0,2\pi)$, which by Proposition  \ref{pr-b}, we can suppose $\theta_0\in (0,\pi)$. 

{\it Claim.} There exists $\bar{s}$ such that $\theta(\bar{s})=\pi/2$. 

Without loss of generality,  suppose $\theta_0\in (0,\pi/2)$. The proof is by contradiction. Let $\theta_1\leq\pi/2$ such that  $\theta(s)\rightarrow\theta_1$ because $\theta$ is   monotonically increasing, in particular, $\theta'(s)\rightarrow 0$. Since $x'(s)=\cos\theta(s)$ and $x(s)$ is bounded, let $x(s)\rightarrow x_1$. By the third equation of (\ref{eq2}), we deduce  $\theta'(s)\rightarrow a\sin\theta_1/x_1>0$, a contradiction. 

Once proved the claim, after a vertical translation, we suppose $z(\bar{s})=0$. Then we use the first part of the argument   for the case $\theta_0=\pi/2$ to prove the result. 

\item Case $a<0$. We know that  $\theta$ is strictly decreasing and that   $\theta$ is a bounded function, with $\theta(s)\searrow \theta_1$ and  $\theta_1\geq 0$.   From (\ref{in2}), $x(s)$ is bounded from below with $x(s)\geq (-m)^{-1/(2a)}$. As $x'(s)\rightarrow\cos\theta_1>0$, then $x(s)\rightarrow\infty$. Moreover, this implies that $\theta'(s)$ is bounded, so the solutions of (\ref{eq2}) are defined in $\r$. As in the case $a>0$, and since $\theta'(s)\not=0$, $z''(x)>0$ and $\gamma$ is a convex graph on the $z$-axis.

If we write $z=z(x)$, it follows that 
$$z'(x)=\frac{z'(s)}{x'(s)}=\frac{\sqrt{-m}}{\sqrt{m+x^{-2a}}}.$$
Thus
$$z(x)=\sqrt{-m}\int_0^x\frac{1}{\sqrt{m+t^{-2a}}}dt.$$
This integral is of hypergeometric type and it is known (\cite{a})  that if $a\in [-1,0)$, then $\lim_{x\rightarrow\infty}z(x)=\infty$ and if $a<-1$, there exists $z_1=z_1(m,a)>0$ such that $\lim_{x\rightarrow\infty}z(x)=z_1$. Here the surface is of catenoid-type.

Suppose now that   $\theta_0\in(0,\pi)$.  Since $\theta$ is a decreasing function, if $\theta$ does not attain the value $\pi/2$, then $\theta(s)\rightarrow\theta_1$, with $\theta_1\in [\pi/2,\pi)$. Then $x(s)$ is initially decreasing. As $x'(s)\rightarrow\cos\theta_1\leq 0$, then it is not possible that  $x(s)\rightarrow\infty$, a contradiction. Once proved that the function $\theta(s)$ attains the value $\theta=\pi/2$, the argument finishes as in the case $a>0$.
\end{enumerate}
\end{proof}

The next result asserts that two rotational surfaces satisfying $\kappa_1=a\kappa_2$ are essentially unique.

\begin{corollary} Given $a\not=0$, two rotational surfaces in $\r^3$ satisfying the linear Weingarten relation $\kappa_1=a\kappa_2$ are unique up to translations and homotheties.
\end{corollary}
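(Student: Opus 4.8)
The plan is to reduce everything to the initial value problem \eqref{eq1}--\eqref{eq2} with $b=0$ and then exploit the symmetries already recorded in Proposition \ref{pr-b}. Given two rotational surfaces $M_1,M_2$ satisfying $\kappa_1=a\kappa_2$ for the same $a\neq 0$, their generating curves $\gamma_1,\gamma_2$ are both solutions of \eqref{eq1} with $b=0$. First I would dispose of the degenerate case: if either curve has $\theta'(s_0)=0$ at some point, then by the argument at the start of the proof of Theorem \ref{t-w1} it is a horizontal line and generates a plane, and two such planes differ by a translation in the $z$-direction; a homothety centered at the origin maps the $z$-axis to itself and does nothing further. So from here on I would assume both curves have $\theta'\neq 0$ everywhere.

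The core of the argument is a normalization that removes all freedom except a single scale parameter. By Theorem \ref{t-w1} each non-planar generating curve is symmetric about a horizontal line and possesses a point where the tangent vector is vertical, that is, where $\theta=\pi/2$ or $\theta=3\pi/2$ modulo $2\pi$ (the extremal radius of the ovaloid, respectively the neck of the catenoid-type). Using the translation invariance in the $z$-direction I would place this point at height $z=0$, and reparametrize so that it occurs at $s=0$; if the tangent there points downward, i.e.\ $\theta_0=3\pi/2$, I apply the orientation reversal of Proposition \ref{pr-b}(1), which fixes $b=0$, traces the same point set, and converts $\theta_0$ into $\theta_0+\pi=\pi/2$ modulo $2\pi$. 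After this reduction each $\gamma_i$ satisfies \eqref{eq1}--\eqref{eq2} with the common initial data $z(0)=0$, $\theta_0=\pi/2$, and a single remaining parameter $x_0^{(i)}=x_i(0)>0$.

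Finally I would match the scale parameter with a homothety. Setting $\lambda=x_0^{(2)}/x_0^{(1)}>0$ and applying Proposition \ref{pr-b}(2) to $\gamma_1$, the rescaled curve $\bar{x}(s)=\lambda x_1(s/\lambda)$, $\bar{z}(s)=\lambda z_1(s/\lambda)$, $\bar{\theta}(s)=\theta_1(s/\lambda)$ again solves \eqref{eq1} with $b/\lambda=0$ and now has initial conditions $\bar{x}(0)=\lambda x_0^{(1)}=x_0^{(2)}$, $\bar{z}(0)=0$, $\bar{\theta}(0)=\pi/2$, which coincide exactly with those of $\gamma_2$. By uniqueness of solutions of ordinary differential equations the rescaled $\gamma_1$ equals $\gamma_2$, so $M_2$ is obtained from $M_1$ by a homothety followed by a translation, as claimed. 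The main obstacle is not the homothety step, which is purely formal, but rather the legitimacy of the normalization: one must be sure that every non-planar solution does reach a vertical tangent so that the reduction to $\theta_0=\pi/2$ is always available; this is precisely what the case analysis in the proof of Theorem \ref{t-w1} guarantees, since there it is shown that for every admissible $\theta_0$ the angle $\theta$ attains the value $\pi/2$.
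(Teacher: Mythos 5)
Your proof is correct and follows essentially the same route as the paper: both reduce to the initial value problem \eqref{eq1}--\eqref{eq2} with $b=0$, normalize using the vertical tangent point whose existence is guaranteed by Theorem \ref{t-w1}, and conclude via the homothety of Proposition \ref{pr-b} together with uniqueness for ODEs. Your version is in fact more explicit than the paper's terse argument, which leaves the homothetic matching of the parameter $x_0$ implicit.
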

\begin{proof} The case $a=1$ implies that the surface is a round sphere, proving the result. Suppose $a\not=1$.   Let $M_1$ and $M_2$ be two rotational surfaces satisfying $\kappa_1=a\kappa_2$. After a translation, we suppose that the rotation axis is the same, namely, the $z$-axis. If $\gamma_i(s)=(x_i(s),0,z_i(s))$ is the generating curve of $M_i$, $i=1,2$, and after Theorem  \ref{t-w1},   the profile curves $\gamma_i$ are vertical at exactly one point. The proof finishes by using  the uniqueness of solutions of ODEs.
\end{proof}

 \begin{figure}[hbtp]
\begin{center}
\includegraphics[width=.25\textwidth]{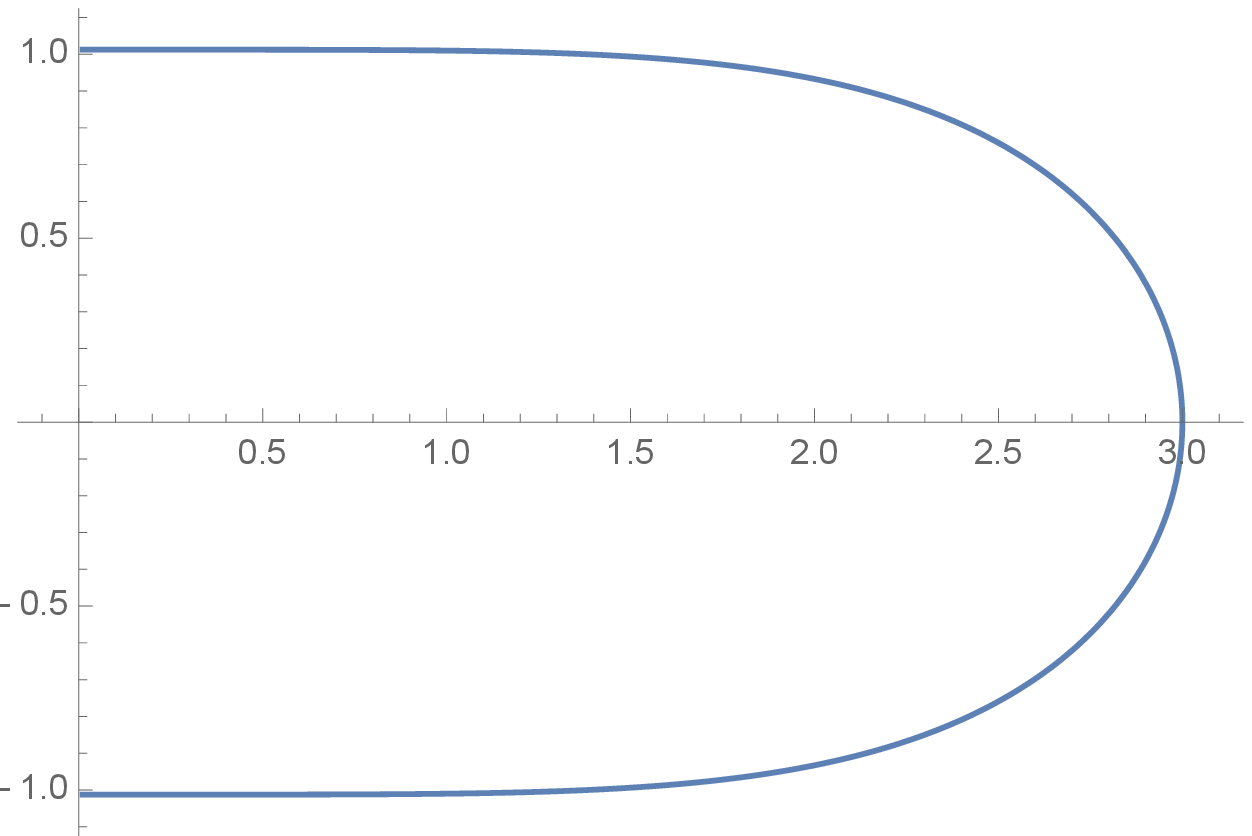}\quad   \includegraphics[width=.35\textwidth]{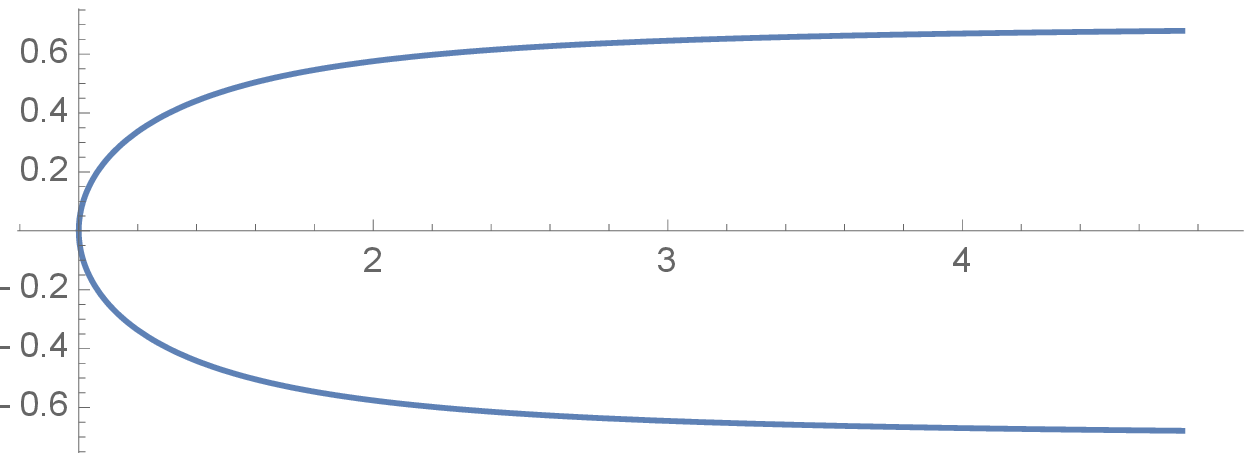}\quad  \includegraphics[width=.18\textwidth]{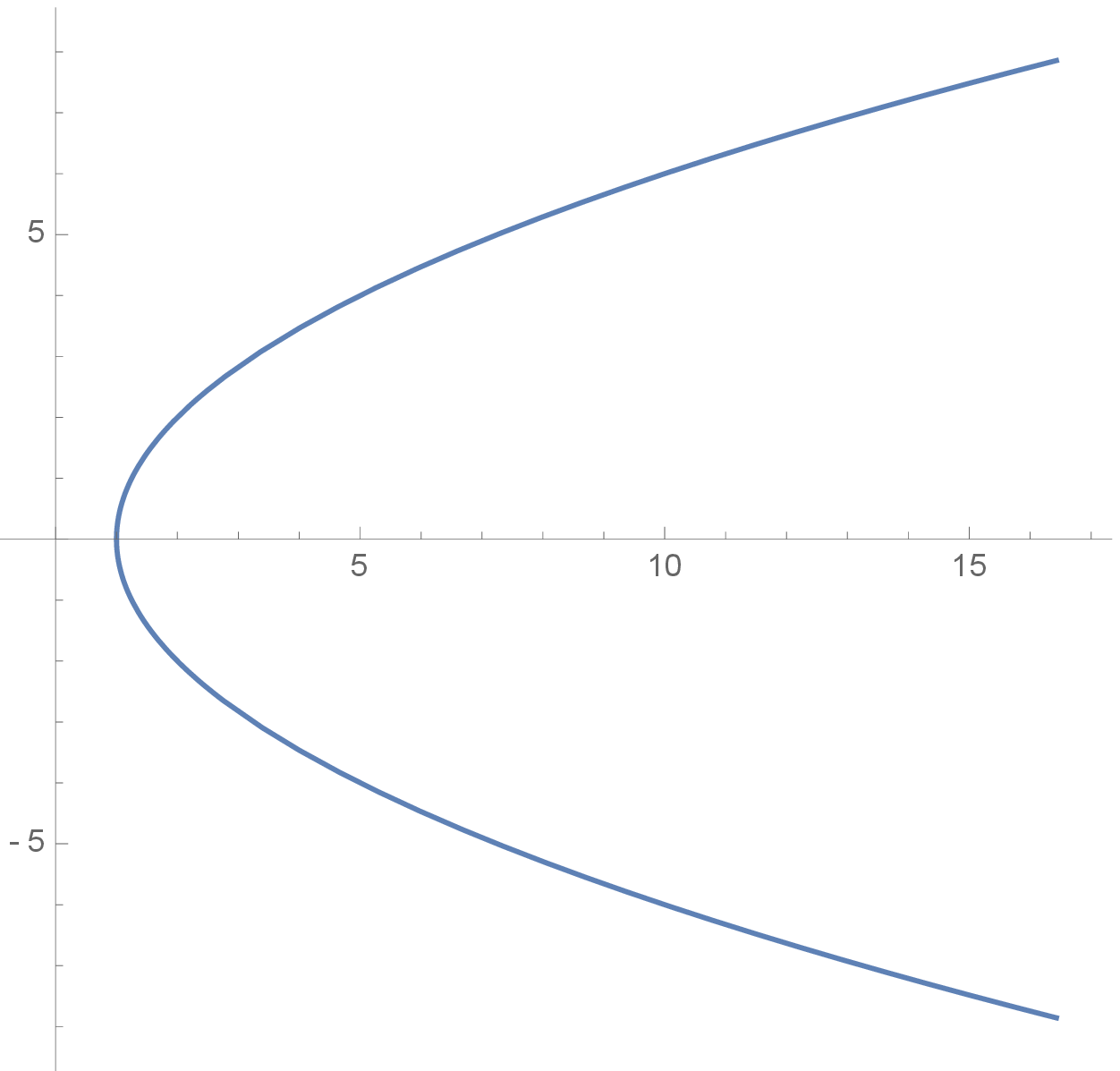}
\end{center}
\caption{Profile curves of rotational surfaces satisfying the relation $\kappa_1=a\kappa_2$. Left:  $a>0$. Middle: $a<-1$. Right: $-1\leq a<0$}\label{figureb01}
\end{figure}

\section{The general  case $\kappa_1=a\kappa_2+b$, $b\not=0$}\label{sec5}

In this section, we consider  the general case $b\not=0$ in the linear Weingarten relation $\kappa_1=a\kappa_2+b$.  The classification of the rotational linear Weingarten surfaces is given by studying the solutions of (\ref{eq1}) for all possible  values  $(x_0,\theta_0)$ on the initial conditions in (\ref{eq2}). By Proposition  \ref{pr-b}, it suffices to reduce  to the case that $b>0$ after a change $\theta_0\rightarrow\theta_0+\pi$ if necessary.     The classification will be done according to the sign of the parameter $a$. 

Firstly,  we need to give an approach  of the solutions of  the equation   (\ref{eq1})  from the viewpoint of the dynamic system theory. Here we follow a similar method as in   \cite{ch,ha} for the self-shrinker equation. We  project the vector field $(x',z',\theta')$ on the $(\theta,x)$-plane obtaining the one-parameter plane vector field
$$\left\{\begin{split}
\theta'(s)&=a\frac{\sin\theta(s)}{x(s)}+b\\
x'(s)&= \cos\theta(s).
\end{split}\right.$$
Multiplying   by $x$, which is positive, in order to eliminate the poles, we can equivalently rewrite as the autonomous system
\begin{equation}\label{a}
\left\{\begin{split}
\theta'(s)&=a \sin\theta(s) +b x(s)\\
x'(s)&=x(s) \cos\theta(s).
\end{split}\right.
\end{equation}
We study the phase plane of (\ref{a}) to visualize the trajectories of  the solutions  along the parameter $s$. By the periodicity of the trigonometric functions, we  consider the vector field
$$V(\theta,x)=(a \sin\theta +b x,x \cos\theta)$$
defined in   $[0,2\pi]\times\{x\geq 0\}$. The critical points are:
$$P_1=(0,0),\ P_2=(\pi,0),\ P_3=(\frac{\pi}{2},-\frac{a}{b}),\ P_4=(\frac{3\pi}{2},\frac{a}{b}).$$
Here the points $P_3$ and $P_4$ are regular equilibrium points whose solutions correspond with constant solutions of  (\ref{a}) and that for (\ref{eq1}) are the vertical straight lines of equations $x=\pm a/b$.  The linearization   of $V$ is 
$$LV(\theta,x)=\left(\begin{array}{cc} a\cos\theta&b\\ -x\sin\theta&\cos\theta\end{array}\right).$$
Denote $\mu_1,\mu_2$ the two eigenvalues of $LV$ at the critical points. The classification of the singularities is the following:
\begin{enumerate}
\item  The eigenvalues for  $P_1$ are $(a,1)$. If $a>0$, the eigenvalues are two real positive numbers, so $P_1$ is an unstable   node when $a\not=1$ or  an asymptotically unstable improper node if $a=1$. If $a<0$, then $P_1$ is an asymptotically unstable node.
\item The eigenvalues of $P_2$ are $(-a,-1)$. If $a>0$, $P_2$ is a stable singularity and if $a<0$, $P_2$  is an asymptotically unstable   saddle point when $a\not=-1$ or  an asymptotically unstable improper saddle point if $a=-1$.
\item The eigenvalues of $P_3$ and $P_4$ depend on the sign of $a$. If $a>0$, then the eigenvalues are $\pm\sqrt{a}$ and thus $P_3$ and $P_4$ are two asymptotically unstable saddle points. If $a<0$, the eigenvalues are $\pm\sqrt{-a}i$ and the points $P_3$ and $P_4$ are centers.
\end{enumerate}

 \subsection{Case $a>0$}
 
 In this subsection we study the case $a>0$ in (\ref{eq1}). Recall that $b>0$ by Proposition \ref{pr-b}. We consider the phase plane : see Figure \ref{figurep1}. Besides the singularities $P_1$ and $P_2$, we have $P_4=(3\pi/2,a/b)$ which is a saddle point. In order to take the initial conditions $(\theta_0,x_0)$,  we will consider those values $\theta_0$ such that the trajectories of the phase plane across some of the vertical lines $\theta=\theta_0$. In the present situation, namely, $a>0$, $b>0$,  it suffices $\theta_0=0$ and $\theta_0=3\pi/2$: see Figure \ref{figurep1}.

 \begin{figure}[hbtp]
\begin{center}
\includegraphics[width=.5\textwidth]{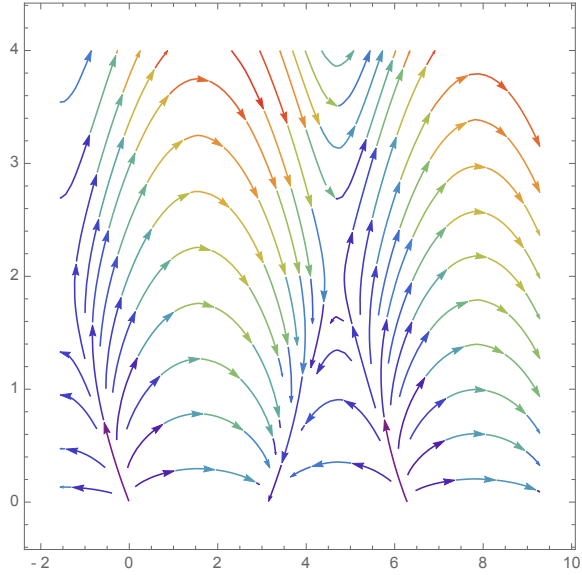}\end{center}
\caption{Phase plane for $a=2$ and  $b=1$. The point $P_4=(3\pi/2,2)$  is a saddle point. We observe that any integral curve passes through the vertical line $\theta_0=0$ or $\theta_0=3\pi/2$}\label{figurep1}
\end{figure}

 \begin{theorem}[Classification case $a>0$] Let $a>0$ and $b\not=0$. The rotational linear Weingarten surfaces are  ovaloids,   vesicle-type, pinched spheroid, immersed spheroid, cilindrical antinodoid-type, antinodoid-type and  circular cylinders.
 \end{theorem}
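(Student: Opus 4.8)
The plan is to extract the whole classification from the global phase portrait of the autonomous system \eqref{a} on the half-cylinder $[0,2\pi]\times\{x\ge 0\}$, using that by Proposition \ref{pr-b} we may assume $b>0$. The line $x=0$ is the invariant rotation axis, and the critical points are the ones already identified in the excerpt: $P_1=(0,0)$ an unstable node, $P_2=(\pi,0)$ a stable node, and $P_4=(3\pi/2,a/b)$ a saddle whose constant solution is the circular cylinder of radius $a/b$ from Proposition \ref{pr-circle}; the point $P_3$ lies in $\{x<0\}$ and is discarded. Each trajectory is a projected meridian $\gamma$; the full surface is recovered by integrating $z'=\sin\theta$ and using the reflection of Proposition \ref{pr-sy}. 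Thus everything reduces to deciding, for each trajectory, whether it runs down to the axis, winds around the $\theta$-circle, or is asymptotic to $P_4$.

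\textbf{Making the portrait rigorous.} The key tool I would introduce is the first integral
\begin{equation}
H(\theta,x)=x^{-a}\sin\theta-\frac{b}{1-a}\,x^{1-a},\nonumber
\end{equation}
which is constant along \eqref{a} by a direct computation (for the exceptional value $a=1$ the integral is $x^{-1}\sin\theta-b\log x$, but $a=1,\ b\neq 0$ already forces a sphere by Proposition \ref{pr-circle}, so it is set aside). Because $H$ is $2\pi$-periodic in $\theta$, its level sets are exactly the trajectories, and a level set closes up in $\theta$ precisely when $x$ stays bounded away from $0$ and $\infty$. Evaluating $H$ at $P_4$ isolates the separatrix level $H=H(P_4)$, while the nullcline $\theta'=0$, namely $x=-\tfrac{a}{b}\sin\theta$ on $\theta\in[\pi,2\pi]$ with apex exactly at $P_4$, locates the inflection points $\kappa_1=\theta'=0$ of the meridians. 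Comparing a trajectory against this nullcline and against the separatrix is what assigns it a type.

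\textbf{Reading off the types.} Organizing solutions by the height at which they cross $\theta_0=0$ and $\theta_0=3\pi/2$, as in the excerpt, the equilibrium $P_4$ gives the circular cylinder, and the saddle separatrix $H=H(P_4)$, asymptotic to $x=a/b$ and carrying a single loop away from the axis, gives the \emph{cylindrical antinodoid-type}. Trajectories that wind monotonically in $\theta$ (staying above the nullcline, where $\theta'>0$) never reach the axis, are periodic in $z$ by Theorem \ref{th-tr}, and have their loops on the far side of the axis: these are the \emph{antinodoid-type}, and here one uses $H$ to confirm boundedness of $x$. Trajectories that descend to the axis land at $P_1$ or $P_2$, i.e. at $\theta=0$ or $\theta=\pi$ with $x\to0$, which by Proposition \ref{pr-or} is an orthogonal contact, so they close up into surfaces meeting the axis; the convex ones, with $\theta$ increasing monotonically from $0$ to $\pi$ and never touching the nullcline, are the \emph{ovaloids}, spheres being the symmetric case.

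\textbf{Main obstacle.} The delicate part is the cascade ovaloid $\to$ vesicle $\to$ pinched $\to$ immersed spheroid, which I would treat as a one-parameter bifurcation in the crossing height $x_0$ at $\theta_0=3\pi/2$ (equivalently in the value of $H$). As $x_0$ drops below $a/b$ the meridian first acquires two inflection points where it meets the nullcline $x=-\tfrac{a}{b}\sin\theta$, producing the concave polar caps of the \emph{vesicle-type} while the surface is still embedded; at a distinguished value the two polar contacts with the axis coincide, giving the \emph{pinched spheroid} tangent to the axis and bounding a solid torus; and for yet smaller $x_0$ the meridian crosses the axis before closing, yielding the \emph{immersed spheroid}. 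Making this rigorous is the real work: one must monitor, as functions of $x_0$, the total turning of $\theta$ and the height $z_1$ of the polar contact (given by integrals of $z$ along the level sets of $H$, of hypergeometric type as in Section \ref{sec4}), and prove enough monotonicity that each shape occurs exactly once and the pinched case is the unique transition, while simultaneously controlling the global behavior of the saddle separatrices so that the winding and axis-reaching families are genuinely separated. The symmetry of Proposition \ref{pr-sy} then upgrades each half-meridian to the closed profile and settles embeddedness versus immersion.
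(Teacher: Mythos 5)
Your overall strategy coincides with the paper's: the same autonomous system (\ref{a}), the same critical points, the same organization of the initial data along the lines $\theta_0=0$ and $\theta_0=3\pi/2$, and the same auxiliary results (Propositions \ref{pr-b}, \ref{pr-sy}, \ref{pr-or}, \ref{pr-circle} and Theorem \ref{th-tr}). What is genuinely new is the first integral
\[
H(\theta,x)=x^{-a}\sin\theta-\frac{b}{1-a}\,x^{1-a},
\]
which is indeed constant along (\ref{a}) (the computation checks out) and which extends the first integral (\ref{in2}) of the case $b=0$ to $b\neq 0$; the level set $H=0$ recovers the sphere of Proposition \ref{pr-circle}, and $H=H(P_4)$ the saddle separatrix. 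The paper uses no conserved quantity in Section \ref{sec5}: it argues directly with the signs of $\theta'$ and $\theta''$ along a solution. Your $H$ would tighten several of those steps (trajectories are level sets, boundedness of $x$ on the winding trajectories falls out of the level-set equation, and the nullcline criterion ``the meridian has an inflection iff the trajectory meets $x=-\frac{a}{b}\sin\theta$'' is exactly the right discriminator between ovaloids and vesicle-type surfaces).

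Two points need repair. First, an orientation slip: with $b>0$ there is no trajectory along which $\theta$ increases monotonically from $0$ to $\pi$, because $\theta'=b>0$ whenever $\sin\theta=0$; a trajectory leaving $P_1$ upward must cross $\theta=\pi$ transversally and overshoot, and that is precisely the vesicle/spheroid family. The ovaloids are instead the trajectories running from $(2\pi,0)$ to $(\pi,0)$ with $\theta$ monotonically \emph{decreasing} (the case $\theta_0=3\pi/2$, $x_0<a/b$). Second, the cascade vesicle $\to$ pinched $\to$ immersed spheroid, which you rightly flag as the hard step and then defer: the paper does not prove any monotonicity of period or height integrals. Since the theorem only asserts that each type occurs, it suffices to combine continuity in $x_0$ with the single soft limit $z(s_1(x_0))\to+\infty$ as $x_0\to\bar{x}$, which holds because the limiting meridian is the separatrix solution asymptotic to the cylinder $x=a/b$, along which $z$ diverges; for small $x_0$ the two poles are ordered the other way, and the intermediate value theorem produces the pinched and immersed cases. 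Adding that one limit to your scheme makes the hypergeometric quadratures, and the monotonicity they would require, unnecessary for the statement as given.
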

 
 \begin{proof}
 As mentioned above, without loss of generality, we suppose $b>0$ and we discuss (\ref{eq1})-(\ref{eq2}) for the initial conditions $\theta_0=0$ and $\theta_0=3\pi/2$.
\begin{enumerate}
\item Case $\theta_0=0$.  The generating curves appear in Figure \ref{figure6}. 

The function $\theta$ is initially increasing at $s=0$ because $\theta'(0)=b>0$. First, we point out that provided $\theta(s)\in (0,\pi)$, we have $\theta'(s)\geq b$, proving that $\theta$ crosses the value $\pi/2$ at  some time $s=s_0$ and the value $\pi$ at $s=s_1$ with   $s_1=2s_0$ and $x(s_1)=x_0$ by   Proposition  \ref{pr-sy}.

We prove now that if  $x_0$ is close to $0$ (for example, if $x_0<a/b$), then  the function $\theta$ does not attain the value $3\pi/2$. We know that after $s=s_1$, the function $x(s)$ is decreasing. If $s$ is the first time that $\theta(s)=3\pi/2$, then $\theta'(s)\geq 0$. However,  $\theta'(s)=-a/x(s)+b<-a/x(s_1)+b=-a/x_0+b<0$. This proves that $\theta$ does not attain the value $3\pi/2$. We prove that, indeed,  $\theta$ decreases after some time $s=s_3$. For this, suppose  $\theta(s)\rightarrow\theta_1$ with $\theta_1\in (\pi,3\pi/2]$. If $\theta_1=3\pi/2$,  then $\theta'(s)\rightarrow 0$ as $s\rightarrow\infty$ because the graphic of $\gamma$ can not intersect the $z$-axis. In particular, $x(s)\rightarrow\bar{x}$, with $\bar{x}\geq 0$. Then $\bar{x}\not=0$ and $\bar{x}=a/b$: a contradiction because $\bar{x}<x_0$. Thus $\theta_1\in (\pi,3\pi/2)$. In such a case, $x'(s)\rightarrow\cos\theta_1<0$, proving that $\gamma$ meets the $z$-axis, a contradiction again because it should be orthogonally (Proposition \ref{pr-or}). Definitively, $\theta$ has to attain a maximum at $s=s_3$, and then, $\theta(s)$ decreases.   If $s>s_1$ is the first time that $\theta(s)=\pi$, then $\theta'(s)\leq 0$ but now the third equation of (\ref{eq1}) gives $\theta'(s)=b>0$. This proves that $\theta$ decreases without reaching the value $\pi$. In the phase plane, the corresponding integral curves   start at the singularity $P_1$ and finish at $P_2$ in finite time. By the phase plane, we see that the trajectories when $x_0$ is close to $0$ start at the singularity $P_1$ and finish at $P_2$, which means that the curve $\gamma$ meets the $z$-axis at two points (which may coincide). Furthermore, $\gamma$ is not a graph on the $z$-axis and the surface is a vesicle.

We now increase $x_0\nearrow\infty$. By the phase plane, for large values of $x_0$, the trajectory is an entire graph on the $\theta$-axis and thus $\theta\rightarrow\infty$. By continuity, there exists $\bar{x}$ such that the integral curve with initial condition $(\theta_0=0,\bar{x})$ finishes in finite time at the singularity   $P_4=(3\pi/2,a/b)$ and by symmetry, this curve begins in $(-\pi/2,a/b)$. In particular, the branches of the solution curve $\gamma$ are asymptotic to the vertical line of equation $x=a/b$. We see that under this situation, $\gamma$ has   one self-intersection point. This is because $z'(s)<0$ for large values of $s$, that is, $z(s)$ is a decreasing function and thus $z(s)\rightarrow-\infty$ because $\gamma$ is asymptotic to a vertical cylinder. Similarly, $z(s)\rightarrow\infty$ as $s\rightarrow -\infty$ and thus the graphic of $\gamma$ has a self-intersection point. This implies that the rotational surface is a cilindrical antinodoid-type. Furthermore, if we denote $(s_1(x_0),s_2(x_0))$   the domain of $\gamma$ emphasizing the dependence on $x_0$, we have $z(s_1)\rightarrow \infty$ as $x_0\rightarrow\bar{x}$. In particular, we have surfaces of vesicle type ($z(s_2)>z(s_1)$), pinched spheroids ($z(s_1)=z(s_2)$) and immersed spheroids ($z(s_1)<z(s_2))$.  

Once $x_0$ acrosses the value $\bar{x}$, then $\theta(s)\rightarrow\infty$ and by Theorem \ref{th-tr}, the graphic of $\gamma$ is invariant by a discrete group of translations in the $z$-direction. Because as $\theta(s)\rightarrow\infty$, we have $z(s)\rightarrow-\infty$, then the surface is of antinodoid-type.

 \item Case $\theta_0=3\pi/2$. The generating curves appear in Figure \ref{figure7}. 
 
  In the phase plane (Figure \ref{figurep1}) we see that for values $x_0$ close to $0$, the trajectories go from the singularity $(2\pi,0)$ to $(\pi,0)$ indicating that the solution is a curve intersecting the $z$-axis at two points. Indeed, as $\theta'(0)=-a/x_0+b$, if $x_0$ is close to $0$, then $\theta$ is a decreasing function around $s=0$, in particular, $x(s)$ is decreasing. On the other hand, it is not possible that $\theta(s)$ attains the value $\pi$ because in such a case, $\theta'(s)\leq 0$, but (\ref{eq1}) gives $\theta'(s)=b>0$. If $s_1$ is the first point where $\theta'(s)$ vanishes, then $\theta''(s_1)=-a\sin\theta(s_1)\cos\theta(s_1)/x(s)^2<0$, a contradiction. Thus, $\theta$ is always decreasing in its domain. Definitively, $\theta(s)\rightarrow\pi$ and the corresponding trajectory finishes in finite time in the singularity $(\pi,0)$. This proves that $\gamma$ intersects orthogonally the $z$-axis, and by symmetry, the same occurs for the other branch of $\gamma$. Moreover $x'(s)=\sin\theta(s)\not=0$ which proves that $\gamma$ is a graph on the $z$-axis and because $\theta'(s)\not=0$ for all $s$, then the rotational surface is   an ovaloid. 
 
 When $x_0$ attains the value $a/b$, then we are in at equilibrium point, namely, the point $P_4$, $\gamma$ is the vertical line of equation $x=a/b$ and the surface is a circular cylinder.  
 Beyond this value for $x_0$,  the function $\theta(s)\rightarrow\infty$ by the phase plane and by Theorem \ref{th-tr}, the surface is invariant by a discrete group of vertical translations.  Now we have that as $s\rightarrow\infty$, then $\theta(s)\rightarrow \infty$ which means that $\gamma'8s)$ turn infinitely times in the counterclockwise sense. Since $z'(0)=-1$, then $z(s)\rightarrow -\infty$ as $s\rightarrow\infty$. This implies that the surface is of antinodoid-type.
   \end{enumerate}
 \end{proof}
 
 It is worth noting that in the family of ovaloids that appear in the case $\theta_0=3\pi/2$ when $x_0$ goes from $0$ to the value $a/b$, it may exist spheres among these examples. Recall that by Proposition  \ref{pr-circle}, for each pair of values $a$ and $b$ there exists a round sphere satisfying (\ref{rw}), where $r=(1-a)/b$ or $(1-1)/b$ depending on the sign of $b$ and the initial conditions (\ref{eq2}). 
  
   \begin{figure}[hbtp]
\begin{center}
\includegraphics[width=.18\textwidth]{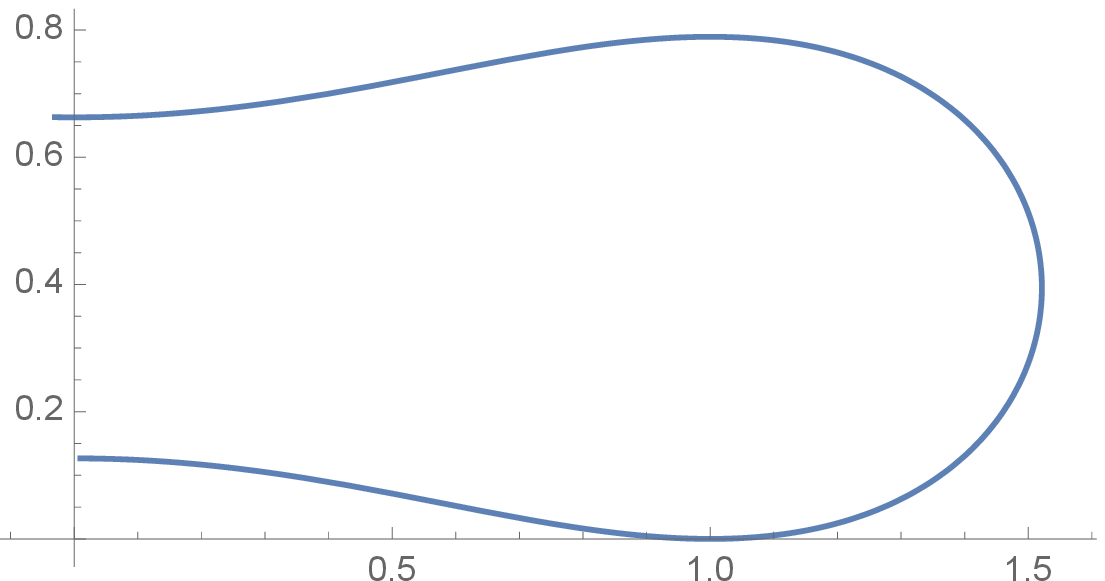} \includegraphics[width=.18\textwidth]{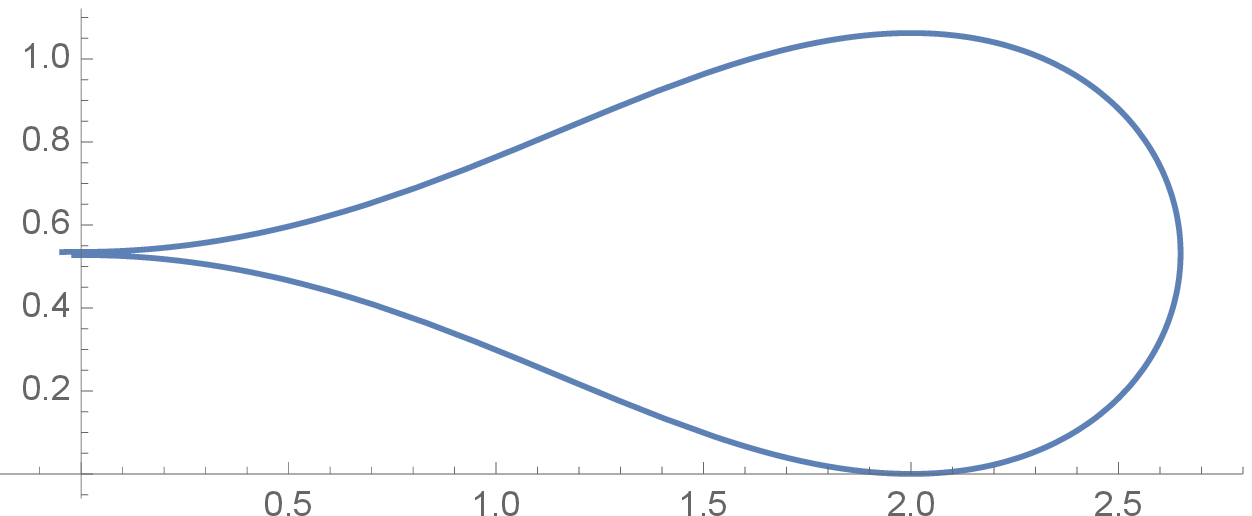} \includegraphics[width=.2\textwidth]{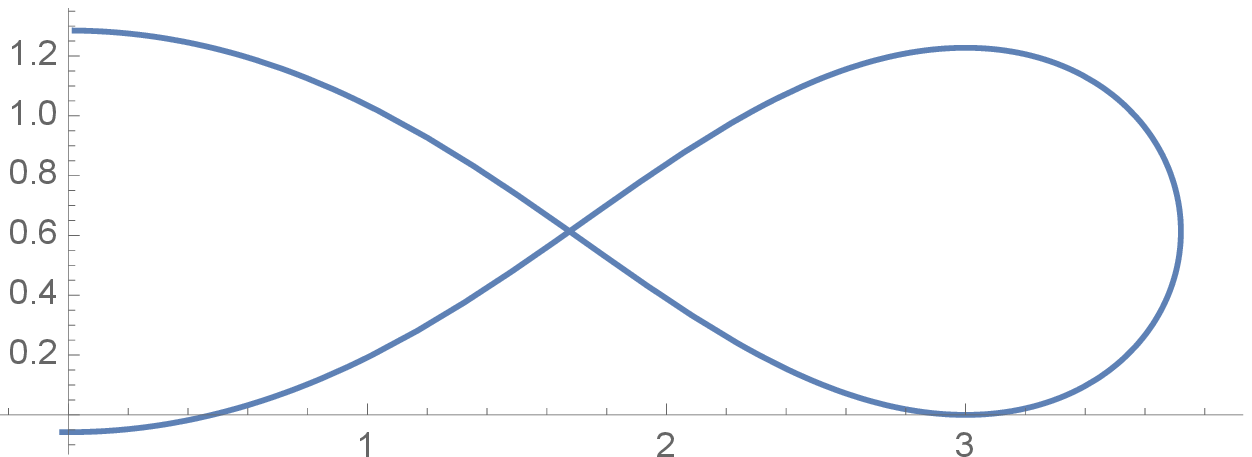} \includegraphics[width=.08\textwidth]{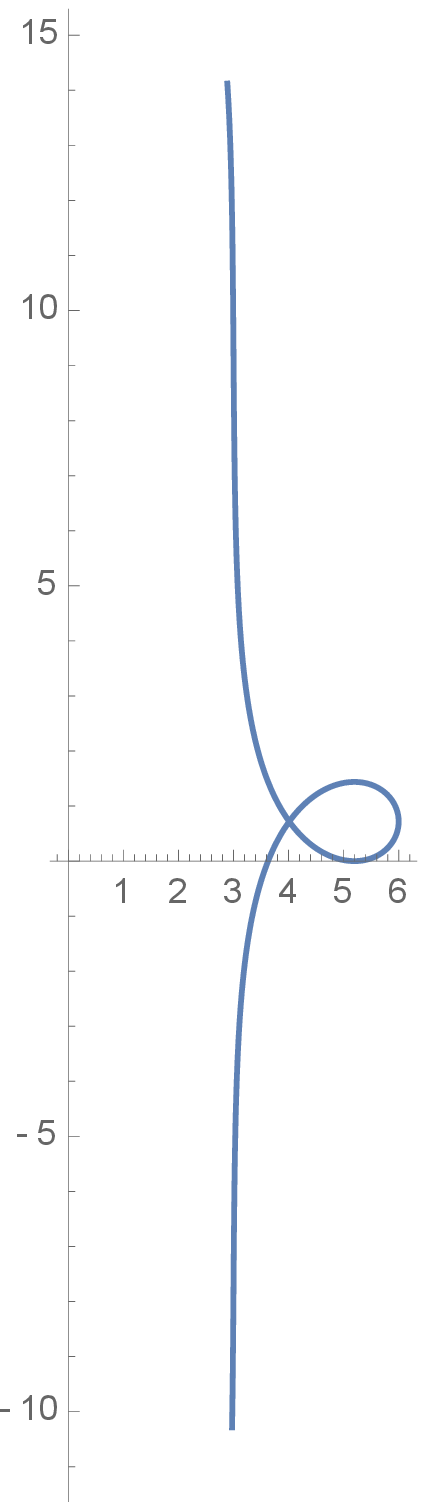} \includegraphics[width=.18\textwidth]{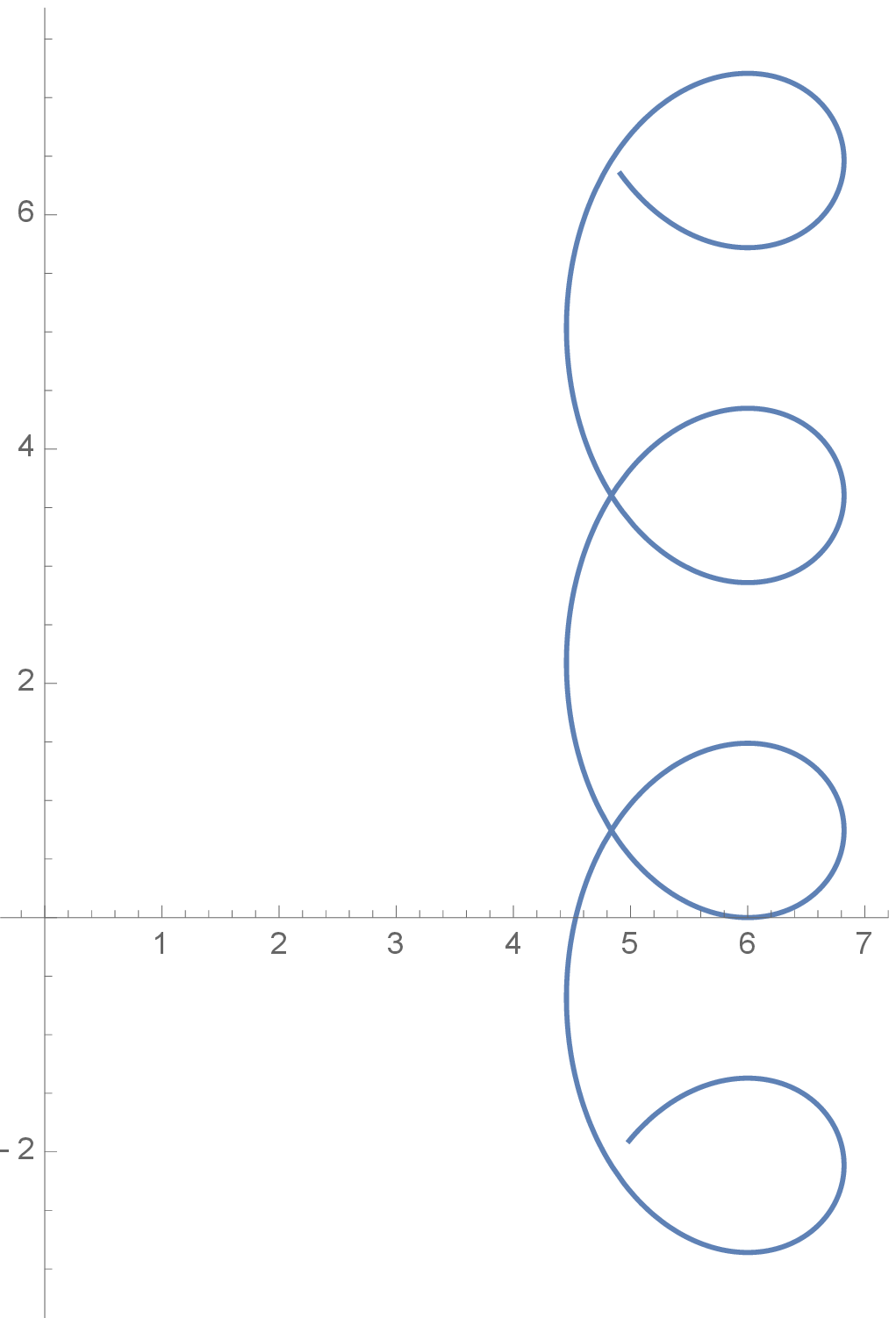}   
\end{center}
\caption{Case $a>0$, $b>0$ and $\theta_0=0$. Here $a=3$, $b=1$.  From left to right, the values of $x_0$ are: $1$, $2$, $3$, $\simeq 5.196$ and $6$, respectively}\label{figure6}
\end{figure}

    \begin{figure}[hbtp]
\begin{center}
\includegraphics[width=.2\textwidth]{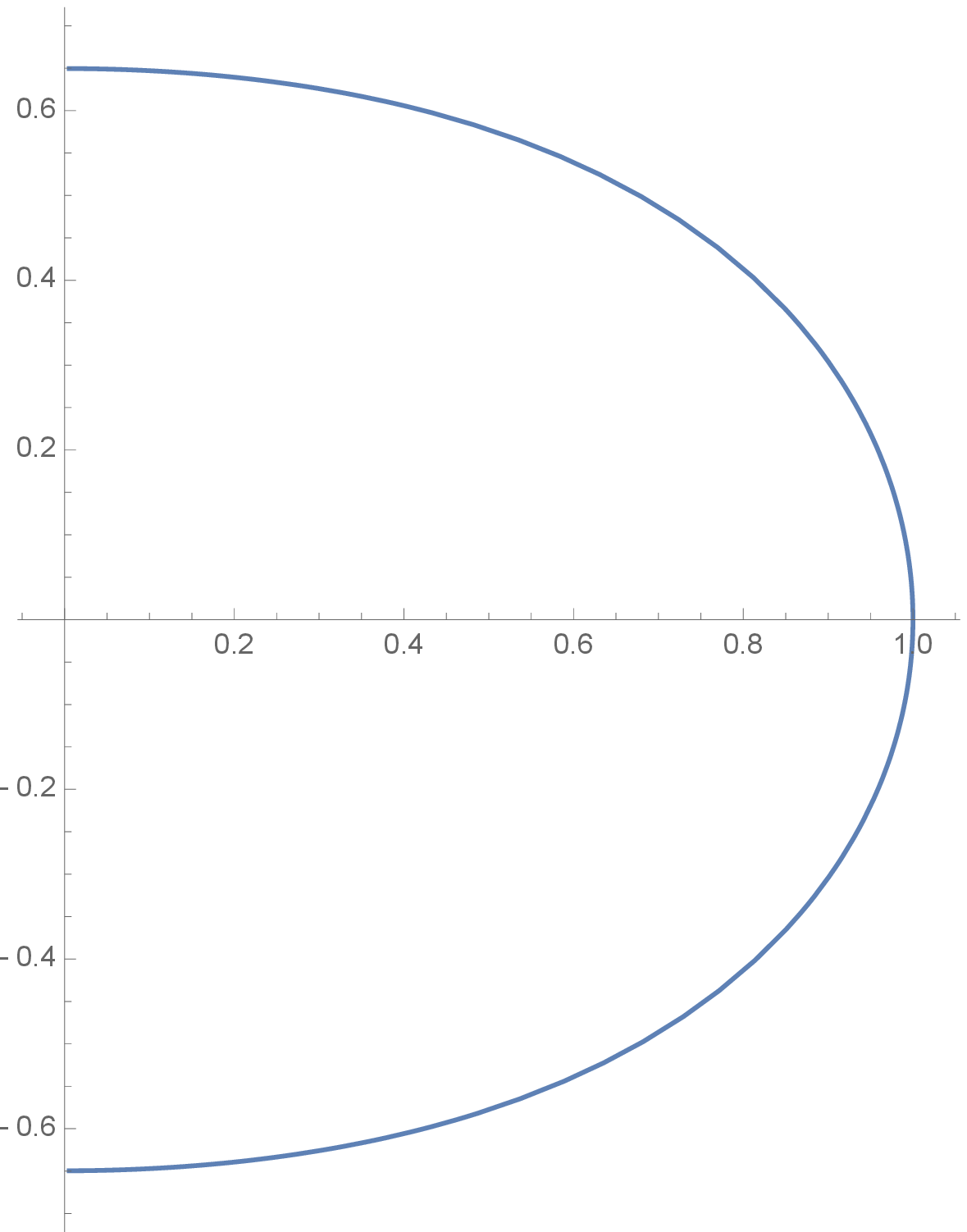} \includegraphics[width=.15\textwidth]{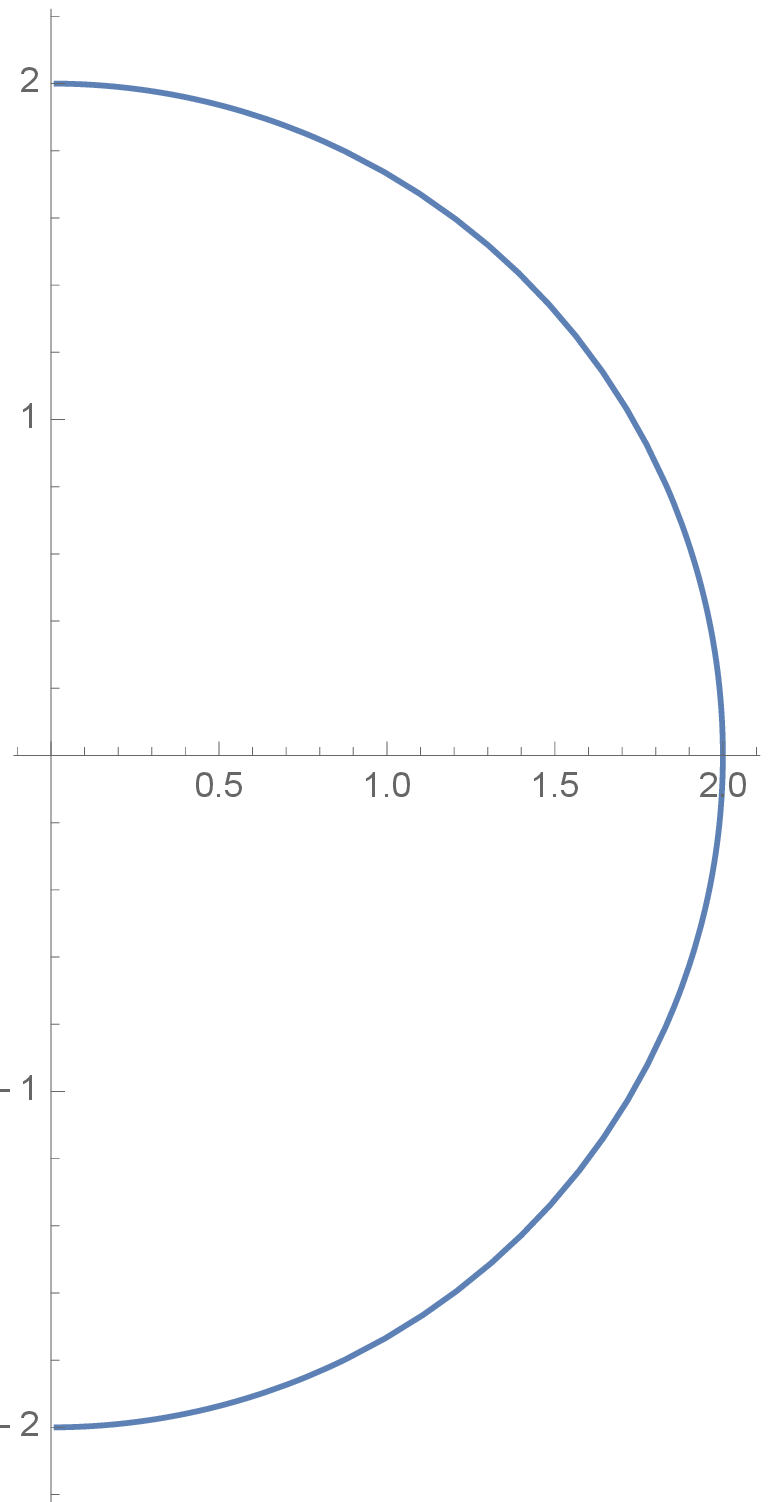} \includegraphics[width=.25\textwidth]{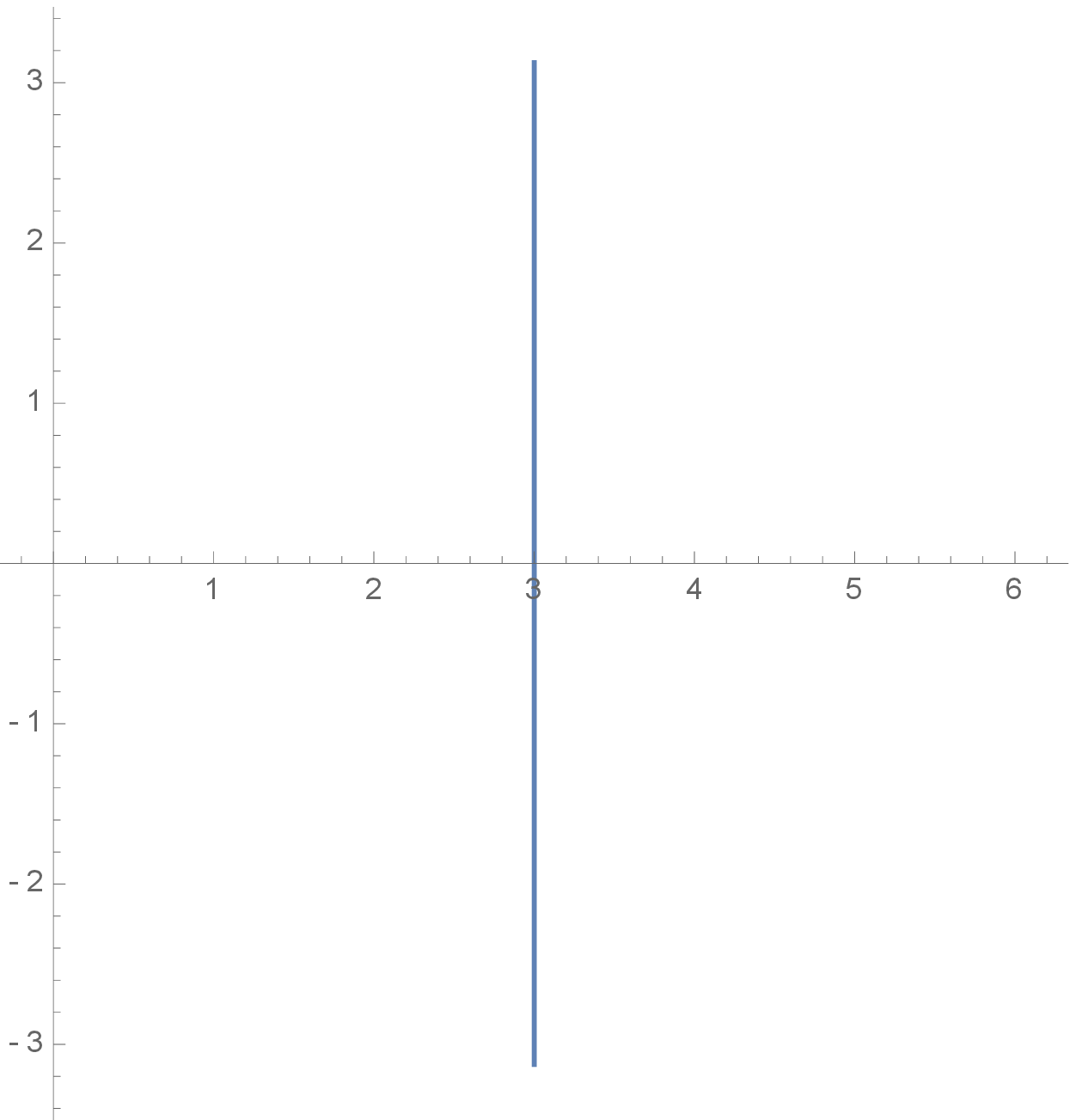} \includegraphics[width=.16\textwidth]{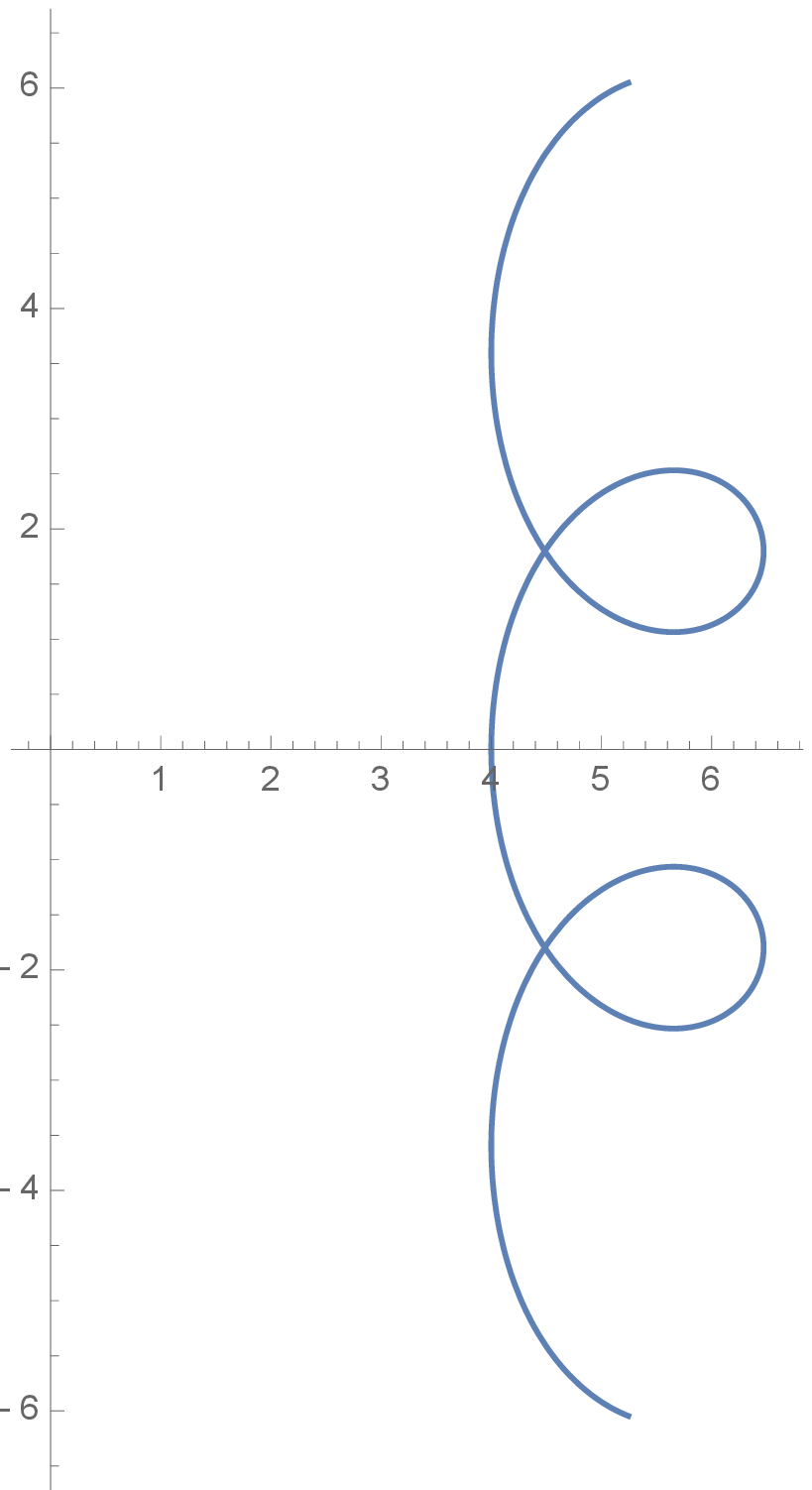}    
\end{center}
\caption{Case $a>0$, $b>0$ and $\theta_0=3\pi/2$. Here $a=3$, $b=1$.  From left to right, the values of $x_0$ are: $1$, $2$, $3$, $4$, respectively}\label{figure7}
\end{figure}

 \subsection{Case $a<0$}

In this subsection we consider $a<0$ in the Weingarten relation (\ref{rw}). Recall $b>0$ by Proposition \ref{pr-b}. As it was pointed in the introduction, many of the surfaces that we will obtain share similar properties with Delaunay surfaces ($a=-1$). We will see in Theorem \ref{t53} below that, in contrast to the case $a>0$,  the only rotational surfaces intersecting the axis are spheres.  
 
We depict in Figure \ref{figurep} the phase plane for the case $a<0$. Now we have that $P_3=(\pi/2,-a/b)$ is a center.   By the phase plane again, it suffices  to consider   $\theta_0=\pi/2$ in (\ref{eq2}) because all trajectories cross the vertical line $\theta_0=\pi/2$.

\begin{figure}[hbtp]
\begin{center}
\includegraphics[width=.4\textwidth]{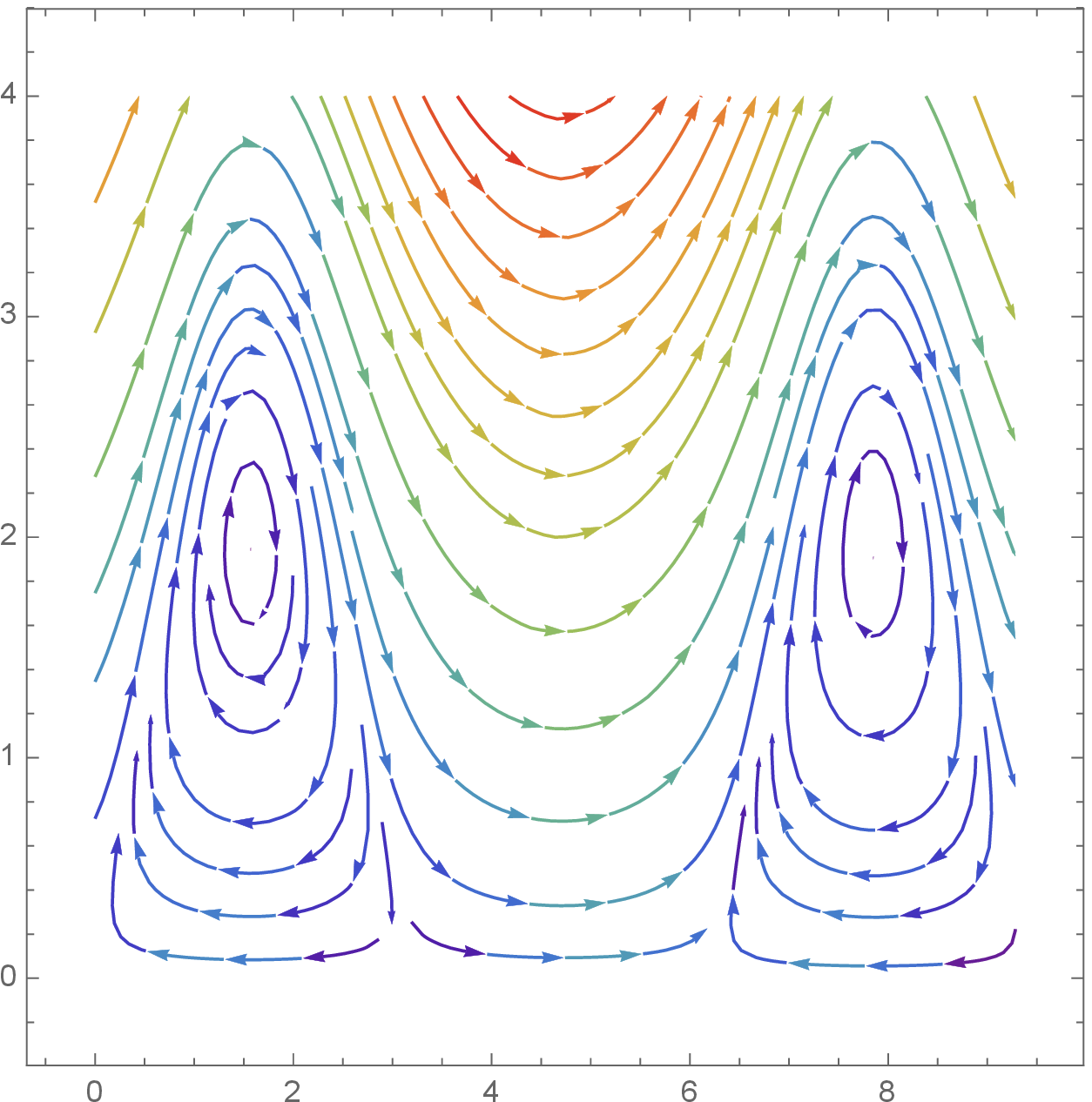}\end{center}
\caption{Phase plane for $a=-2$ and $b=1$. The point $P_3=(\pi/2,2)$  is a center}\label{figurep}
\end{figure}

\begin{theorem}\label{t53}
Let $a<0$ and $b\not=0$. The rotational linear Weingarten surfaces   are  unduloid-type, circular cylinders, spheres and nodoid-type.
\end{theorem}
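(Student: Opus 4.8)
The plan is to mirror the structure of the $a>0$ proof but now exploit the fact that $P_3=(\pi/2,-a/b)$ is a center rather than a saddle. By Proposition~\ref{pr-b} we again reduce to $b>0$, and since $a<0$ we have $-a/b>0$, so the equilibrium $P_3$ lies in the physically relevant region $\{x>0\}$. The key structural observation is that the phase portrait of \eqref{a} near a center consists of a nested family of closed orbits surrounding $P_3$, together with (possibly) unbounded trajectories outside a separatrix. Because all trajectories cross the line $\theta=\pi/2$, I would parametrize the whole classification by the single initial value $x_0$ at $\theta_0=\pi/2$, letting $x_0$ run from $0$ upward, and read off the surface type from the shape of the orbit through $(\pi/2,x_0)$.

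First I would treat the equilibrium itself: when $x_0=-a/b$ the solution is the constant orbit $P_3$, giving the vertical line $x=-a/b$, i.e.\ a circular cylinder, in agreement with Proposition~\ref{pr-circle}(3). Next, for $x_0$ in a neighborhood of $-a/b$ the orbit is a small closed loop around the center; here I would invoke Theorem~\ref{th-tr} once I show the range of $\theta$ does not contain a full interval of length $2\pi$, so that $\theta$ oscillates between two vertical tangencies and the generating curve is an embedded periodic graph-like profile. These are the unduloid-type surfaces, and the cylinder is their degenerate member. As $x_0$ increases past the separatrix the orbit ceases to be closed in the $(\theta,x)$-plane but $\theta(s)$ becomes monotone and unbounded, so by Theorem~\ref{th-tr} the profile is again periodic in $z$ but now $\gamma$ develops loops toward the axis, producing the nodoid-type (non-embedded) surfaces. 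To locate the spheres I would use Proposition~\ref{pr-circle}(2): for each admissible $(a,b)$ there is exactly one round sphere, and I would identify it as the distinguished orbit on which $\gamma$ closes up and meets the axis.

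The crucial point, and the reason for the sentence preceding the theorem, is Theorem~\ref{t53}'s claim that \emph{the only surfaces meeting the axis are spheres}. To establish this I would argue by the conserved quantity of the planar system: multiplying and integrating \eqref{a} (equivalently using the first integral analogous to \eqref{in2} but with the $b$-term present) yields a relation of the form $x\,(\text{function of }\theta)=\text{const}$ along each orbit, and I would show that an orbit can reach $x=0$ only along the special level set that also forces $\theta$ to sweep exactly through $0$ or $\pi$ with the orthogonality of Proposition~\ref{pr-or}; tracking this level set shows it corresponds precisely to the sphere of Proposition~\ref{pr-circle}(2). This is the step I expect to be the main obstacle, because it requires ruling out any non-spherical closed or axis-meeting configuration, which for $a>0$ was possible (vesicles, spheroids) precisely because $P_3$ was a saddle; here the center structure must be leveraged to exclude all such exotic profiles.

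Finally I would assemble the cases: $x_0<-a/b$ and $x_0>-a/b$ off the separatrix give unduloid- and nodoid-types respectively, $x_0=-a/b$ gives the cylinder, and the unique axis-meeting orbit gives the sphere, which together with Theorem~\ref{t-w1} (for the limiting behaviour) and the symmetry results of Section~\ref{sec3} exhausts all solutions of \eqref{eq1}--\eqref{eq2}. The embeddedness dichotomy (unduloids embedded, nodoids not) follows from whether $\theta$ stays within an interval of length less than $2\pi$ or not, exactly as in the classical Delaunay picture to which the $a=-1$ case specializes.
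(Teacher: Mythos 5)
Your overall route is the paper's: reduce to $b>0$, take $\theta_0=\pi/2$, and read the surface type off the orbit of \eqref{a} through $(\pi/2,x_0)$ as $x_0$ sweeps past the center $P_3=(\pi/2,-a/b)$ and then past a separatrix. But three points need repair. First, Theorem~\ref{th-tr} applies only when the range of $\theta$ \emph{contains} an interval of length $2\pi$; it says nothing in the opposite case, so it cannot deliver the periodicity of the unduloid-type profiles. That periodicity comes from Proposition~\ref{pr-sy}: one shows directly that $\theta$ reaches vertical tangencies and reflects the graph there. This is also where your appeal to ``nested closed orbits around a center'' is too quick: purely imaginary eigenvalues give only a \emph{linear} center and do not by themselves exclude spiralling of the nonlinear orbits; the paper closes this gap with an explicit monotonicity argument ($\theta$ cannot reach $0$, its first critical point satisfies $\theta''=-a\sin\theta\cos\theta/x^2>0$ and is a minimum, after which $\theta$ increases to $\pi/2$), followed by the reflection. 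Second, your final assembly ``$x_0<-a/b$ gives unduloids, $x_0>-a/b$ off the separatrix gives nodoids'' is incorrect: the transition happens at the separatrix $\bar x_0=(1-a)/b$, which is strictly larger than $-a/b$ (their difference is $1/b>0$), and for $x_0\in(-a/b,(1-a)/b)$ the orbit is still closed around the center and the surface is still of unduloid type.

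On the axis-meeting step, your proposed conserved quantity does exist and would work: with $u=\sin\theta$ one finds $du/dx=au/x+b$, a linear ODE whose integration gives $\sin\theta=\tfrac{b}{1-a}\,x+Cx^{a}$ along each orbit. Since $a<0$, an orbit can approach $x=0$ with $\sin\theta$ bounded only if $C=0$, and the level set $C=0$ is exactly the circle $\sin\theta=x/r$ with $r=(1-a)/b$, i.e. the sphere of Proposition~\ref{pr-circle}. This is a genuinely different, and arguably cleaner, justification that only spheres meet the axis than the paper's identification of the separatrix by continuity in the phase plane (the paper locates $\bar x_0=(1-a)/b$ as the orbit joining $P_1$ and $P_2$ and recognizes it as the half-circle). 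As written, however, you only announce this computation and flag it as the main obstacle, so to count as a proof it still has to be carried out.
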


\begin{proof}

 We see by the phase plane that if $x_0$ is close to $0$, the integral curve through $(\pi/2,x_0)$ is a cycle around $P_3$ which means that the angle function $\theta(s)$ varies in a bounded interval of length less than $2\pi$. As $x_0$ increases and arrives to $x_0=-a/b$, then we know that it is an equilibrium point and the solution is the vertical line of equation $x=-a/b$. After this value, the function $\theta$ follows being bounded until a critical value for $x_0$ where beyond the integral curve is defined in the entire $\theta$-axis. This implies that $\theta$ goes to $\infty$ and the velocity vector $\gamma'(s)$ turns infinitely times. We give the details.

At $s=0$ we have $\theta'(0)=a/x_0+b$. Since $a<0$, if $x_0$ is sufficiently close to $0$, then $\theta'(0)<0$ and if $x_0$ is sufficiently big, then $\theta'(0)>0$. In fact, if $x_0=-a/b$ in (\ref{eq2}), it is immediate that the solution of (\ref{eq1}) is a vertical line and the corresponding surface is a vertical circular cylinder. Suppose that $x_0<-a/b$. Then the function $\theta$ is decreasing at $s=0$. It is not possible that $\theta$ attains the value $0$ because at the first such a point $s$, we have $\theta'(s)\leq 0$, but from equation (\ref{eq1}), $\theta'(s)=b>0$. Thus $\theta$ is bounded from below by some $\theta_1$ with $\theta_1\geq 0$, which we may assume is its infimum. 
 
We claim that $\theta_1$ is a minimum of the function $\theta(s)$. On the contrary, $\theta$ is a decreasing function. If $\theta_1>0$, then $x(s)\rightarrow\infty$ and $\theta'(s)\rightarrow 0$, which is not possible by the third equation of (\ref{eq1}) which leads to 
$\theta'(s)\rightarrow b$. If $\theta_1=0$ and independently if $x(s)\rightarrow \infty$ or $x(s)\rightarrow \bar{x}$, for some $\bar{x}>0$,   we obtain the same contradiction.  
 
By the claim, there exists $s_0$ such that $\theta'(s_0)=0$ and since $\theta''(s_0)>0$, $s_0$ is a minimum of $\theta$. Now the function $\theta$ increases after $s=s_0$. We prove that $\theta$ crosses the value $\pi/2$. On the contrary, $\theta(s)$ is bounded from above by some value $\theta_2\leq\pi/2$. It is not possible that $\theta'$ vanishes at some point $s$ because at $s=s_1$ we have
$$\theta''(s_1)=-a\frac{\sin\theta(s_1)\cos\theta(s_1)}{x^2(s_1)}.
$$
 and  we infer that  $\theta''(s)>0$ so $s$ would be a minimum. Thus $\theta$ is strictly increasing. The arguments are now known. As $x'(s)\rightarrow\cos\theta_2$, if $\theta_2<\pi/2$, then $x(s)\rightarrow\infty$ and $\theta'(s)\rightarrow b\not=0$, a contradiction. If $\theta_2=\pi/2$ and $x(s)\rightarrow\infty$, we arrive to the same contradiction. If $x(s)\rightarrow \bar{x}$, for some $\bar{x}>0$, and as $\theta'(s)\rightarrow 0$, then $x_1=-a/b$. This proves that the trajectories  of the phase plane arrive to the point $P_3$: a contradiction, because $P_3$ is a center. 
 
Let $s_1>0$ be the first  time where $\theta(s_1)=\pi/2$.  The proof  finishes using Proposition  \ref{pr-sy} where the embedded graphic $\gamma([0,s_1])$ reflects about the horizontal line $z=z(s_1)$. This proves that    the graphic of $\gamma$ is embedded and periodic in the $z$-direction with  period $T=2s_1$. The surface is of unduloid-type.
 
If $x_0=-a/b$, we know that the solution is the vertical line and the surface is a circular cylinder. 
 
Suppose $x_0>-a/b$ and close to $-a/b$, we know by the phase plane that the trajectories are closed round the center $P_3$. This proves that the angle $\theta(s)$ and the function $x(s)$ are bounded in some interval. Thus  $\theta$ oscillates around $\pi/2$ obtaining the surface is of unduloid-type. This occurs until a certain value $x_0=\bar{x}_0$ which is the last time that $\gamma$ leaves to be of nodoid-type and if $x_0>\bar{x}_0$, then the trajectories in the phase plane are of infinite length. In fact we know by Proposition  \ref{pr-circle} that this occurs when $\gamma$ is a half-circle (the length of variation of $\theta$ is exactly $\pi$): here $\bar{x}_0=(1-a)/b$ because $\theta$ is increasing at $s=0$. In the phase plane, this solution corresponds with the trajectory starting at $(\pi/2,\bar{x})$ and finishes in finite time $s=s_0$ at the singularity $P_2=(\pi,0)$. The other branch of this trajectory finishes in $P_1=(0,0)$.
 
Finally, when $x_0>\bar{x}_0$,   the phase plane implies that the trajectories are entire graphs on the $\theta$-axis and   increasing   with $\theta(s)\rightarrow\infty$. Using Proposition  \ref{pr-sy} and   Theorem  \ref{th-tr}, the graphic of the solution is a periodic curve with infinite self-intersections. Now we have $z'(0)=1$, and thus, $z$ is increasing at $s=0$. Since we know that $x$ is not a bounded function, then we deduce that $z(s)\rightarrow\infty$ as $s\rightarrow\infty$.   This proves that the surface is of nodoid-type. 
 \end{proof}

 In Figure \ref{anegativo} all types of surfaces in the case $a<0$ and $b>0$ appear. We point out that when $x_0\rightarrow 0$, the unduloid-type solution degenerate in a sequence of tangent spheres centered at the $z$-axis.

\begin{figure}[hbtp]
\begin{center}
 \includegraphics[width=.1\textwidth]{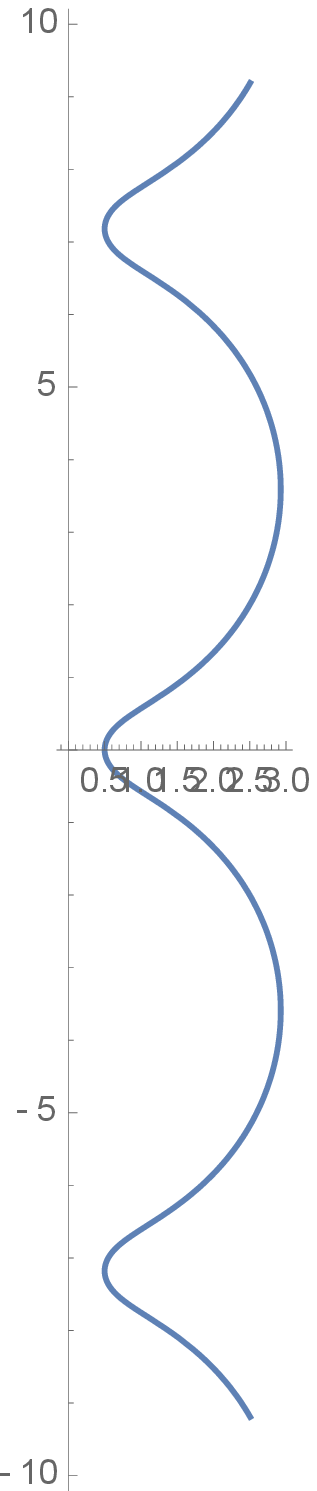} \includegraphics[width=.15\textwidth]{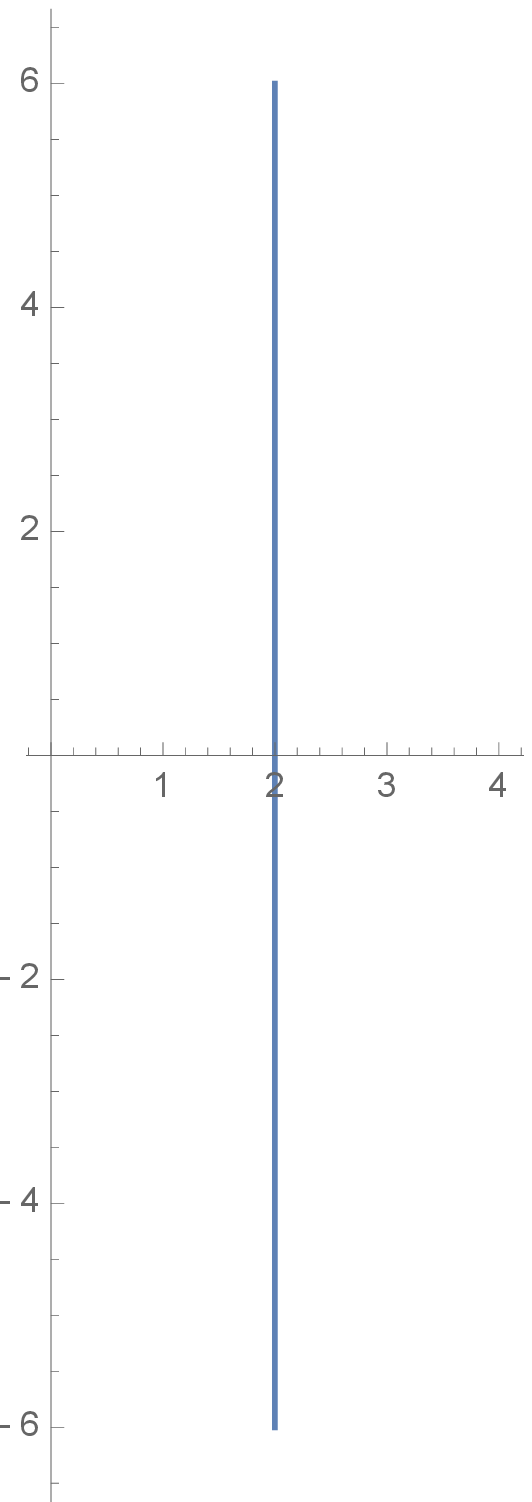} \includegraphics[width=.1\textwidth]{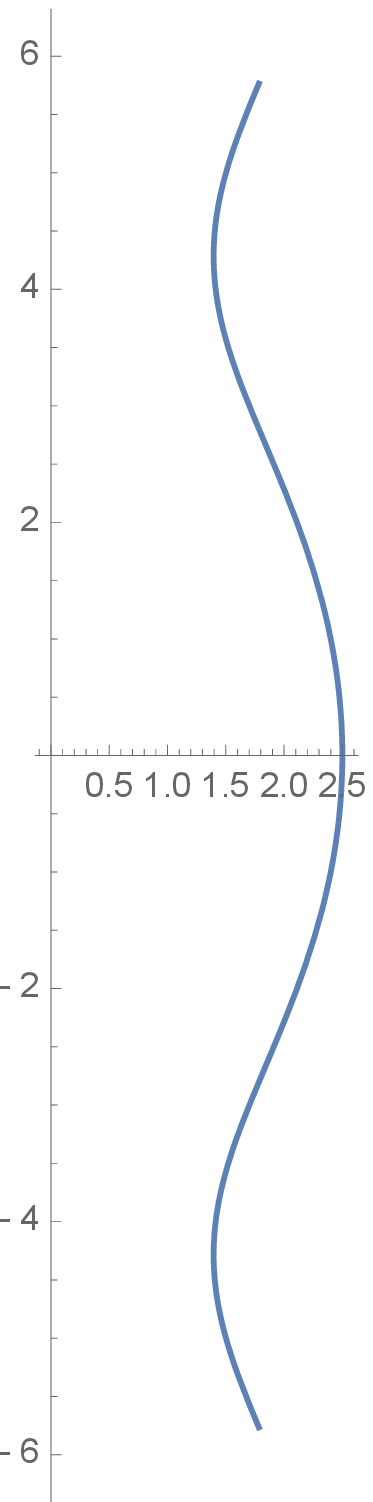} \includegraphics[width=.2\textwidth]{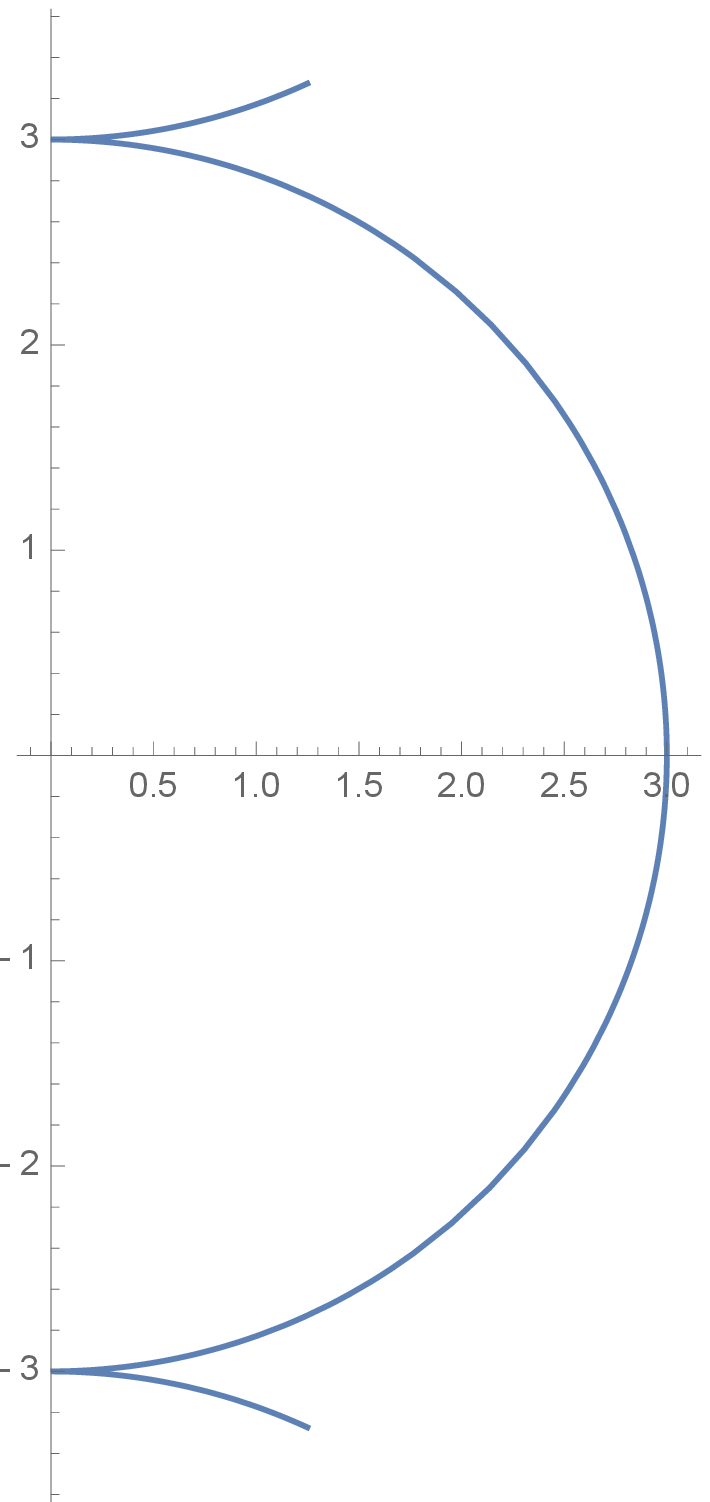}  \includegraphics[width=.2\textwidth]{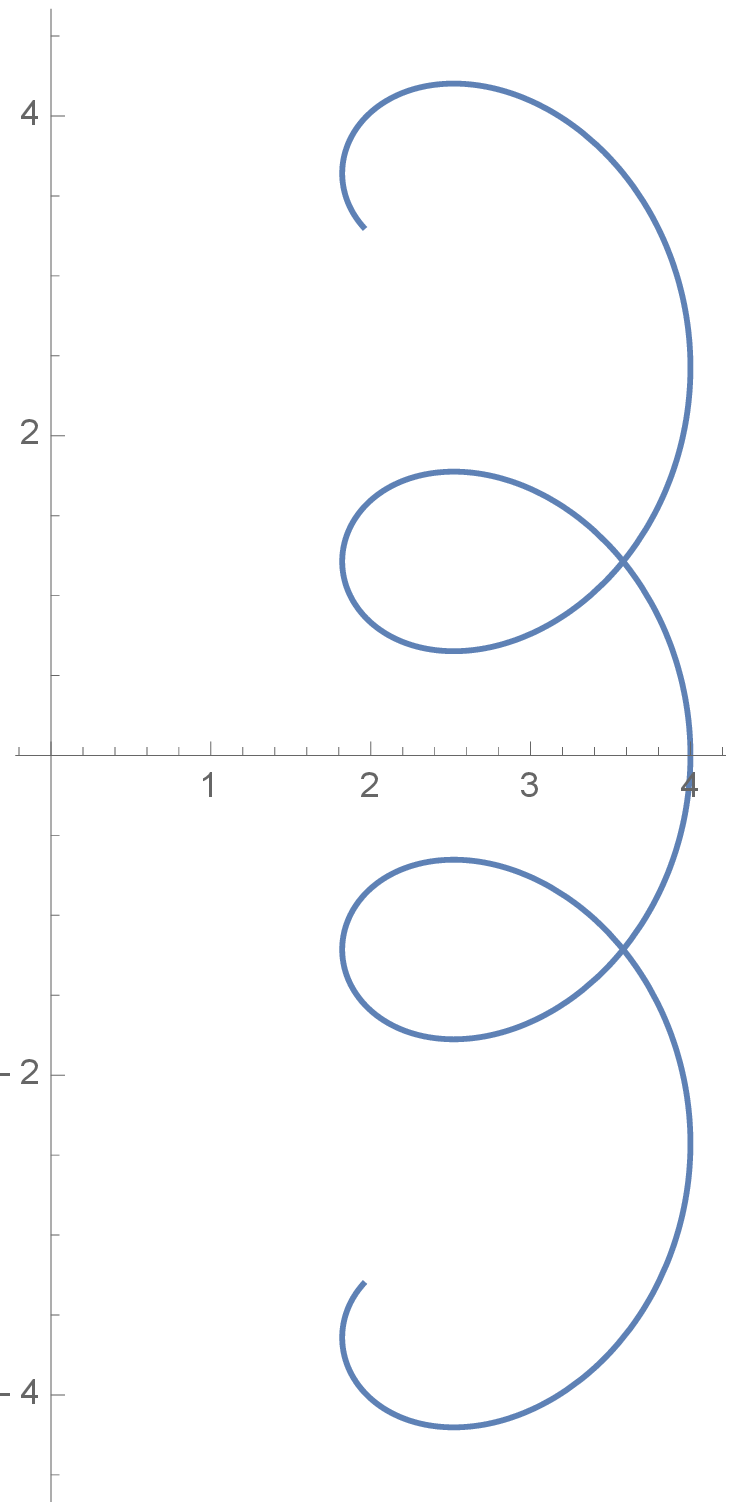} 
\end{center}
\caption{Case $a<0$, $b>0$ and $\theta_0=\pi/2$. Here $a=-2$, $b=1$. From left to right, the values of $x_0$ are:  $0.5$, $2$, $2.5$, $3$ and $4$, respectively}\label{anegativo}
\end{figure}
  


\begin{thebibliography}{99}

\bibitem{a} Abramowitz, M., Stegun, I. A.: Handbook of mathematical functions with formulas, graphs and mathematical tables, National Bureau of Standards Applied Mathematical Series 55 (1964).

\bibitem{bg} Barros, M., Garay, O. J.:  
Critical curves for the total normal curvature of 3-dimensional space forms.  
J. Math. Anal. Appl. 389 (2012),   275--292. 

\bibitem{ch} Chern S. S.:     Some new characterization of the Euclidean sphere. Duke Math. J.  12 (1945),  279--290.

\bibitem{cft} Corro, A. V.,  Ferreira, W.,  Tenenblat, K.:  Ribaucour transformations for constant mean curvature and linear Weingarten surfaces. Pacific J. Math. 212 (2003),  265--296. 

\bibitem{de}  Delaunay, C.: Sur la surface de r\'evolution dont la courbure moyenne est constante, J. Math. Pures Appl., 6 (1841), 309--320.

\bibitem{gmm}  G\'alvez, J. A.,  Mart\'{\i}nez, A.,  Mil\'an, F.: Linear Weingarten surfaces in $R^3$. Monatsh. Math. 138 (2003), 133--144.  

\bibitem{gp} Garay, O. J., P\'ampano, A.: A note on p-elasticae and the generalized EMP equation. Preprint.

\bibitem{ha}   Halldorsson, H. P.: Self-similar solutions to the curve shortening flow.  Trans. Amer. Math. Soc. 364 (2012), 5285--5309.

\bibitem{ho} Hopf, H.:   \"{U}ber Fl\"{a}chen mit einer Relation zwischen den Hauptkr\"{u}mmungen. Math. Nachr. 4 (1951), 232--249.

\bibitem{ho2} Hopf, H.: Differential geometry in the large.  Lecture Notes in Mathematics, 1000. Springer-Verlag, Berlin, 1983.

\bibitem{ka} Kapouleas, N.:
Constant mean curvature surfaces constructed by fusing Wente tori. 
Invent. Math. 119 (1995),  443--518. 

\bibitem{ku} K\"{u}hnel, W.: Differential geometry. Curves-Surfaces-Manifolds. Student Mathematical Library, 16. American Mathematical Society, Providence, RI, 2002.

\bibitem{ks} K\"{u}hnel, W.,   Steller, M.: 
On closed Weingarten surfaces. Monatsh. Math. 146 (2005), 113--126. 

\bibitem{lo} L\'opez, R.:   On linear Weingarten surfaces.  
Internat. J. Math. 19 (2008),   439--448.

\bibitem{mo1} Mladenov, I. V., Oprea, J.: The mylar balloon revisited. 
Amer. Math. Monthly 110 (2003), 761--784. 

\bibitem{mo2} Mladenov, I. V., Oprea, J.: The Mylar balloon: new viewpoints and generalizations. Geometry, integrability and quantization, 246--263, Softex, Sofia, 2007. 

\bibitem{pa} Papantoniou, B.:   Classification of the surfaces of revolution whose principal curvatures are connected by the relation $A\kappa_1+B\kappa_2=0$ where $A$ or $B$ is different of from zero. 
Bull. Calcutta Math. Soc. 76 (1984),  49--56. 

\bibitem{rs} Rosenberg, H.,   Sa Earp, R.:  
The geometry of properly embedded special surfaces in $\r^3$, e.g., surfaces satisfying $aH+bK=1$, where a and b are positive.  Duke Math. J. 73 (1994),  291--306.

\bibitem{st} Shiohama, K., Takagi, R.:  A characterization of a standard torus in  $E^3$. J. Diff. Geom. 4 (1970), 477--485.
\bibitem{we} Weingarten, J.: Ueber eine Klasse auf einander abwickelbarer Fl\"{a}chen. 
J. Reine Angew. Math. 59 (1861), 382--393. 

\bibitem{wente} Wente, H. C.: Counterexample to a conjecture of H. Hopf. Pacific J. Math. 121 (1986), 193--243.




 




\end{thebibliography}
\end{document}